\newtheorem{definition}{\sc Definition}[section]
\newtheorem{theorem}[definition]{\sc Theorem}
\newtheorem{lemma}[definition]{\sc Lemma}
\newtheorem{proposition}[definition]{\sc Proposition}
\newtheorem{corollary}[definition]{\sc Corollary}
\newtheorem{remark}[definition]{\sc Remark}
\renewcommand{\d}{\text{\rm d}}
\newcommand{\vep}{\varepsilon}
\newcommand{\sgn}{{\rm sgn}}
\newcommand{\M}{\mathfrak{M}}
\newcommand{\HeinLvier}{H^1_0(\Omega) \cap L^4(\Omega)}
\newcommand{\HzweiLsechs}{H^2(\Omega) \cap L^6(\Omega)}
\newcommand{\iI}{I_{[0,\infty)}}
\newcommand{\iII}{I_{[u_0(x),\infty)}}
\newcommand{\iIIo}{I_{[u_0(x_0),\infty)}}
\newcommand{\iIIv}{I_{[v_0(x),\infty)}}
\newcommand{\iIIn}{I_{[u_{0,n}(x),\infty)}}
\newcommand{\lam}{\kappa}
\newcommand{\rr}{c_r}
\DeclareMathOperator*{\esssup}{ess\,sup}
\begin{document}


\title[]{Allen-Cahn equation with strong irreversibility}

\author{Goro Akagi}
\address[Goro Akagi]{Mathematical Institute, Tohoku University, Aoba, Sendai 980-8578 Japan ; Helmholtz Zentrum M\"unchen, Institut f\"ur Computational Biology, Ingolst\"adter Landstra\ss e 1, 85764 Neunerberg, Germany ; Technische Universität M\"unchen, Zentrum Mathematik, Boltzmannstra\ss e 3, D-85748 Garching bei M\"unchen, Germany.}
\email{akagi@m.tohoku.ac.jp}

\author{Messoud Efendiev}
\address[Messoud Efendiev]{Helmholtz Zentrum M\"unchen, Institut f\"ur Computational Biology, Ingolst\"adter Landstra\ss e 1, 85764 Neunerberg, Germany}
\email{messoud.efendiyev@helmholtz-muenchen.de}

\date{\today}

\thanks{The first author is supported by JSPS KAKENHI Grant Numbers 16H03946, 16K05199, 17H01095 and by the Alexander von Humboldt Foundation and by the Carl Friedrich von Siemens Foundation. He would also like to acknowledge the kind hospitality of the Helmholtz Zentrum M\"unchen and the Technical University of Munich during his stay in Munich.}

 \begin{abstract}
This paper is concerned with a fully nonlinear variant of the Allen-Cahn equation with strong irreversibility, where each solution is constrained to be non-decreasing in time. Main purposes of the paper are to prove the well-posedness, smoothing effect and comparison principle, to provide an equivalent reformulation of the equation as a parabolic obstacle problem and to reveal long-time behaviors of solutions. More precisely, by deriving \emph{partial} energy-dissipation estimates, a global attractor is constructed in a metric setting, and it is also proved that each solution $u(x,t)$ converges to a solution of an elliptic obstacle problem as $t \to +\infty$.
 \end{abstract}

\subjclass[2010]{\emph{Primary}: 47J35 ; \emph{Secondary}: 35K86, 37B25 } 

\keywords{Strongly irreversible evolution equation ; Allen-Cahn equation ; obstacle parabolic problem ; global attractor ; $\omega$-limit set ; partial energy-dissipation}

\maketitle

\tableofcontents

\pagestyle{myheadings}

\section{Introduction}\label{S:I}

Evolution equations along with \emph{strong irreversibility} often appear in Damage Mechanics to describe a unidirectional evolution of damaging phenomena. For instance, damage accumulation and crack propagation exhibit strong irreversibility, since the  degree  of damage never decreases spontaneously. Therefore, to describe such phenomena as a phase field model, one may need to take into account the strong irreversibility (or \emph{unidirectionality}) of evolution. On the other hand, (spatial) \emph{propagation} of damage is described in terms of diffusion (type) processes. However, these two effects, namely unidirectionality of evolution and  diffusive nature,  often conflict each other. Such a conflict of two different effects may produce significant features of damaging phenomena. A few ways have been proposed to describe 
damaging phenomena in view of such two effects; above all, one often employs parabolic PDEs with the \emph{positive-part} \emph{function} $(\,\cdot\,)_+ := \max \{\,\cdot\,,0\} \geq 0$ (or a \emph{negative-part} one). The simplest example reads,
\begin{equation}\label{irHeat}
u_t = \left( \Delta u \right)_+ \ \mbox{ in } \Omega \times (0,\infty), 
\end{equation}
which is a classical problem (see e.g.~\cite{K-S80}) and also still revisited by many authors (see e.g.~\cite{GiSa94,GiGoSa94} and also~\cite{AkKi13,Liu14}). Further physical backgrounds of such irreversible models will be briefly reviewed in \S \ref{S:irr}. From view points of mathematical analysis, equations such as \eqref{irHeat} are classified as \emph{fully nonlinear} PDEs, and hence, the lack of gradient structure gives rise to difficulties and particularly prevents us to reveal \emph{dissipation structure} driven by the diffusion term. Indeed, in view of the irreversible feature of the equation, dissipative behaviors of solutions may be partially inhibited. On the other hand, dissipative behaviors may also occur like a classical diffusion equation, unless they violate the strong irreversibility (see also Remark \ref{R:diss} below).

In this paper, we are concerned with the Allen-Cahn equation with strongly irreversibility,
\begin{equation}\label{iAC}
u_t = \Big( \Delta u - W'(u) \Big)_+ \ \mbox{ in } \ \Omega \times (0,\infty),
\end{equation}
where $W'(u) = u^3 - \lam u$ (with $\lam > 0$) is the derivative of a double-well potential $W(u)$ simply given by
\begin{equation}\label{potential}
W(u) := \frac 1 4 u^4 - \frac \lam 2 u^2
\end{equation}
and where $(\,\cdot\,)_+$ stands for the positive-part function and $\Omega$ is a smooth bounded domain of $\mathbb R^N$. In order to fix an idea, we shall use the simplest form \eqref{potential}; however, the most of arguments throughout the present paper can be extended to more general double-well potential functions (on the other hand, the dimensional restriction in (v) of Theorem \ref{T:ex} relies on the cubic growth). Equation \eqref{iAC} is a strongly irreversible version of the celebrated Allen-Cahn equation,
\begin{equation}\label{AC}
 u_t = \Delta u - W'(u) \ \mbox{ in } \ \Omega \times (0,\infty),
\end{equation}
which has been well studied and is known for a phase-separation model driven by the combination of double-well potential and diffusion term. Moreover, \eqref{iAC} also appears in a special setting of a phase field model describing crack-propagation (see Remark \ref{R:iAC}).

As is already pointed out, \eqref{iAC} is classified as a fully nonlinear parabolic equation, which is formulated in a general form $u_t = F(D^2 u)$ with a nonlinear function $F$ and the Hessian matrix $D^2 u$. Here we shall reformulate the equation as a generalized gradient flow (of subdifferential type), which is fitter to distributional frameworks and energy techniques. By applying the (multivalued) inverse mapping $\alpha(\,\cdot\,)$ of $(\,\cdot\,)_+$ to both sides, \eqref{iAC} is reduced to 
$$
\alpha(u_t) \ni \Delta u - W'(u) \ \mbox{ in } \ \Omega \times (0,\infty).
$$
The inverse mapping $\alpha$ of $(\,\cdot\,)_+$ can be decomposed as follows:
\begin{equation}\label{subid}
\alpha(s) = s + \partial \iI(s), \quad \partial \iI(s) = \begin{cases}
				   0 &\mbox{ if } \ s > 0\\
				   (-\infty,0] &\mbox{ if } \ s = 0\\
				   \emptyset &\mbox{ if } \ s < 0
				  \end{cases}
				  \quad \mbox{ for } \ s \in \mathbb R,
\end{equation}
where $\partial \iI$ stands for the subdifferential of the indicator function $\iI$ over the half-line $[0,+\infty)$. In the present paper, we shall particularly consider the Cauchy-Dirichlet problem for \eqref{iAC}, which is hereafter denoted by (P) and equivalently given as
\begin{alignat}{4}
 u_t + \eta - \Delta u + W'(u) = 0, \quad \eta &\in \partial \iI (u_t) \quad &\mbox{ in }& \ \Omega \times (0,\infty),\label{pde}\\
 u&= 0 \quad &\mbox{ on }& \ \partial \Omega \times (0,\infty),\label{bc}\\
 u&= u_0 \quad &\mbox{ in }& \ \Omega.\label{ic}
\end{alignat}
Furthermore, comparing \eqref{pde} with \eqref{iAC}, one can immediately find the relation,
\begin{equation}\label{mu}
 \eta = - \Big( \Delta u - W'(u) \Big)_-,
\end{equation}
where  $(\,\cdot\,)_-$ stands for the negative part function, i.e., $(s)_- := \max \{-s,0\} \geq 0$.  To be precise, such a doubly-nonlinear reformulation including the relation \eqref{mu} is justified in a strong formulation, e.g., under the frame over $L^2(\Omega)$, where equations hold in a pointwise sense; on the other hand, in a weaker formulation such as $H^{-1}$-framework, it is more delicate to verify the equivalence of two equations as well as \eqref{mu}.

Behaviors and properties of solutions to (P) can be imagined from the form of equations \eqref{iAC} and \eqref{pde}. For instance, each solution $u(x,t)$ of (P) behaves like that of the classical Allen-Cahn equation \eqref{AC} at $(x,t)$ where $\Delta u - W'(u)$ is positive. Otherwise, $u(x,t)$ never evolves. Therefore one may expect that smoothing effect and energy-dissipation \emph{partially} occur, but not everywhere. On the other hand, it is not easy to give a proof for such conjectures. Indeed, even existence and uniqueness of solutions have not yet been fully studied due to the severe nonlinearity of \eqref{iAC} and \eqref{pde}. Moreover, to the best of authors' knowledge, such partial effects of smoothing and energy-dissipation have never been studied so far. Different from classical Allen-Cahn equations such as \eqref{AC}, due to the defect of the (full) energy-dissipation structure, (P) has no absorbing set, and hence, no global attractor in any $L^p$-spaces. Indeed, from the non-decrease of $u(x,t)$ in time, i.e., $u(x,t) \geq u(x,s)$ a.e.~in $\Omega$ if $t \geq s$,  one cannot expect any dissipation estimates for the $L^p$-norm $\|u(\cdot,t)\|_{L^p(\Omega)}$, provided that $u_0 \geq 0$. On the other hand, due to the presence of a gradient structure lying inside of $(\,\cdot\,)_+$ in \eqref{iAC}, (P) shares a common Lyapunov energy with \eqref{AC},
$$
E(w) := \dfrac 1 2 \int_\Omega |\nabla w(x)|^2 \, \d x + \int_\Omega W(w(x)) \, \d x,
$$
which decreases along the evolution of solutions $u = u(x,t)$ to (P) as well as of those to \eqref{AC}. So one may expect that a partial energy-dissipation occurs (more precisely, a (quantitative) dissipative estimate for $E(u(t))$ holds in a proper sense) and it enables us to construct an absorbing set and a global attractor for (P) under a non-standard setting. However, it is unclear in which setting one can find out a partial energy-dissipation structure of (P) and establish quantitative dissipative estimates enough for a construction of a global attractor.

As we shall see in \S \ref{S:ref}, the Cauchy-Dirichlet problem (P) (equivalently, \eqref{iAC}, \eqref{bc}, \eqref{ic}) can be equivalently rewritten as an obstacle problem of parabolic type,
\begin{align*}
   u \geq u_0, \quad u_t - \Delta u + u^3 - \lam u \geq 0 \quad \mbox{ in } \ \Omega \times (0,\infty),\\
   \left( u - u_0 \right) \left(u_t - \Delta u + u^3 - \lam u \right) = 0 \quad \mbox{ in } \ \Omega \times (0,\infty),\\
   u|_{\partial \Omega} = 0, \quad u|_{t = 0} = u_0,
\end{align*}
whose obstacle function coincides with the initial datum. Such parabolic obstacle problems whose obstacle functions coincide with initial data are also studied in the context of (American) option evaluation (see~\cite{LauSal09,CafFig13} and references therein). This reformulation will play a key role to discuss long-time behaviors of solutions as well as to investigate qualitative properties, e.g., comparison principle and uniqueness (or selection principle), of solutions to (P) under milder assumptions.

The strongly irreversible evolution also exhibits a stronger dependence on initial state, compared to a classical Allen-Cahn equation. For example, solutions of \eqref{irHeat} are constrained to be not less than initial data.  Such a stronger initial-state-dependence of evolution can be found out more explicitly in the parabolic obstacle problem above. Indeed, the evolution law (= the obstacle problem) explicitly depends on initial data. Moreover, as will be illustrated below, due to the strong irreversibility, one cannot expect the existence of global attractors in a usual sense for dynamical systems (DS for short) generated by such strongly irreversible equations. Furthermore, related issues of DS (e.g., convergence to equilibria and Lyapunov stability of equilibria) must be also affected by such a strong dependence of DS on initial states. Therefore, it would be interesting to reveal the whole picture of such a peculiar dynamics. 

Main purposes of the present paper are to prove the well-posedness of (P) in an $L^2$-framework and to investigate qualitative and quantitative properties (e.g., comparison principle, smoothing effect, energy-dissipation estimates) and long-time behaviors of solutions. In particular, we shall focus on how to extract an energy-dissipation structure of \eqref{pde} beyond the obstacle arising from the strong irreversibility, and moreover, we shall discuss in which setting (e.g., phase space) one can construct a global attractor for the DS generated by (P).

In Section \ref{S:irr}, we briefly review several previous studies on strongly irreversible evolution equations (such as \eqref{iAC} and \eqref{pde}) arising from Damage Mechanics and so on.
Section \ref{S:wp} is devoted to discussing the well-posedness and a smoothing effect for (P) and providing a proof for the uniqueness and continuous dependence of solutions on initial data.
In Section \ref{S:e}, we arrange energy inequalities which will be used to prove a smoothing effect for (P) as well as to reveal long-time behaviors of solutions. In this section, one may also find out energy-dissipation structures concealed in the equation. Finally, we also give a sketch of proof for the smoothing effect, that is, the existence of solutions to (P) for a wider class of initial data. A detailed proof will be shown in Appendix \ref{S:Aex}.
In Section \ref{S:ref}, we equivalently reformulate (P) as a parabolic variational inequality of obstacle type. This fact also indicates the lack of classical regularity of solutions to (P); indeed, it is well known that solutions to (elliptic) obstacle problems are at most of class $C^{1,1}$ (see, e.g.,~\cite{Cafferelli}). The argument for justifying the reformulation is somewhat delicate and deeply related to the construction of solutions to (P); so in this section, we shall give only a formal argument, and a precise one will be given in Appendix \S \ref{A:reform}.
Moreover, in Section \ref{S:cp}, we shall discuss a comparison principle for the equation resulting from the reformulation. Furthermore, we shall obtain a uniform estimate for solutions to (P), and in particular, it will be verified that solutions of (P) enjoy a \emph{range-preserving property}, that is, if $u_0$ takes a value within a certain range, then so does $u(\cdot,t)$ for any $t > 0$. It is a fundamental requirement for phase-field models. Here the comparison principle is not directly proved for (P), since there arise some difficulties from the double nonlinearity in the $L^2$-framework (on the other hand, it can be directly proved for (P) under some additional assumptions).
Sections \ref{S:ps} and \ref{S:At} are devoted to constructing a global attractor for a DS generated by (P) in a proper sense. As mentioned above, no global attractor exists in any $L^p$-spaces, because of the strong irreversibility. Therefore, it is most crucial how to set up a phase set, which will be given by a metric space without linear and convex structures. 
In Section \ref{S:conv}, we shall prove the convergence of each solution $u(x,t)$ for (P) as $t \to \infty$ and characterize the limit as a solution of an elliptic variational inequality of obstacle type. Also here, the reformulation exhibited in \S \ref{S:ref} will play a crucial role to characterize equilibria.
In Appendix \S \ref{S:Aex}, we particularly give a detailed proof for the existence of solutions for (P) along with rigorous derivations of energy inequalities, which will be derived in \S \ref{S:e} with formal arguments. This part would be of independent interest in view of studies on nonlinear evolution equations as well. In particular, it is noteworthy that a smoothing effect is proved for the doubly nonlinear evolution equation \eqref{pde}, although there are only few results on smoothing effects for doubly nonlinear evolution equations of the form $A(u') + B(u) = 0$. In Appendix \S \ref{A:reform}, the reformulation of (P), which is formally discussed in \S \ref{S:ref}, will be proved rigorously.\\

\noindent
{\bf Notation.} We denote by $\|\cdot\|_p$, $1 \leq p \leq \infty$ the $L^p(\Omega)$-norm, that is, $\|f\|_p := (\int_\Omega |f(x)|^p \, \d x)^{1/p}$ for $p \in [1,\infty)$ and $\|f\|_\infty := \esssup_{x \in \Omega} |f(x)|$. Denote also by $(\cdot,\cdot)$ the $L^2$-inner product, i.e., $(u,v) := \int_\Omega u(x) v(x) \, \d x$ for $u,v \in L^2(\Omega)$. For each normed space $X$ and $T > 0$, $C_w([0,T];X)$ denotes the space of weakly continuous functions on $[0,T]$ with values in $X$. We also simply write $u(t)$ instead of $u(\cdot,t)$, which is regarded as a function from $\Omega$ to $\mathbb R$, for each fixed $t \geq 0$. Here and henceforth, we use the same notation $\iI$ for the indicator function over the half-line $[0,\infty)$ as well as for that defined on $L^2(\Omega)$ over the closed convex set $K := \{u \in L^2(\Omega) \colon u \geq 0 \ \mbox{ a.e.~in } \Omega\}$, namely,
$$
\iI(u) = \begin{cases}
	  0 &\mbox{ if } \ u \in K,\\
	  \infty &\mbox{ otherwise }
	 \end{cases}
	 \quad \mbox{ for } \ u \in L^2(\Omega),
	 $$
	 if no confusion may arise. Moreover, let $\partial \iI$ also denote the subdifferential operator (precisely, $\partial_{\mathbb R} \iI$) in $\mathbb R$ (see~\eqref{subid}) as well as that (precisely, $\partial_{L^2} \iI$) in $L^2(\Omega)$ defined by
	 $$
	 \partial_{L^2} \iI(u) = \left\{ \eta \in L^2(\Omega) \colon (\eta, u - v) \geq 0 \ \mbox{ for all } \ v \in K \right\} \quad \mbox{ for } \ u \in K.
	 $$
Here, we note that these two notions of subdifferentials are equivalent each other in the following sense: for $u, \eta \in L^2(\Omega)$,
	 $$
	 \eta \in \partial_{L^2} \iI(u) \quad \mbox{ if and only if } \quad \eta(x) \in \partial_{\mathbb R} \iI(u(x)) \ \mbox{ a.e.~in } \Omega
	 $$
	 (see, e.g.,~\cite{HB1,HB3}).
We denote by $C$ a non-negative constant, which does not
depend on the elements of the corresponding space or set and may vary from
line to line.

\section{Evolution equations with strong irreversibility}\label{S:irr}

Evolution equations including the positive-part function such as \eqref{irHeat} and \eqref{iAC} have been studied in several papers and they play important roles particularly in Damage Mechanics. In this section, we briefly review some of those models and related nonlinear PDEs including the positive-part function.

\subsection{Quasi-static brittle fracture models}

Francfort and Marigo~\cite{FraMar98} proposed a quasi-static evolution of brittle fractures in elastic bodies based on Griffith's criterion (see also~\cite{DMasoToader} and~\cite{Francfort}). Let $\Omega \subset \mathbb R^3$ be an elastic body and let $\Gamma_n \subset \overline \Omega$ be a crack at time $t_n$. Then the crack $\Gamma_{n+1} \subset \overline\Omega$ and the displacement $\vec u_{n+1} : \Omega \setminus \Gamma_{n+1} \to \mathbb R^3$ at time $t_{n+1}$ are obtained as a minimizer of the elastic energy,
 $$
 \mathcal F(\vec u,\Gamma) = \underbrace{\int_{\Omega \setminus \Gamma} \mu |\vep(\vec u)|^2 + \lambda |\mathrm{tr}\,\vep(\vec u)|^2 \, \d x}_{\text{bulk energy}} + \underbrace{\mathcal H^2(\Gamma)}_{\text{surface energy}}
 $$
 among $\Gamma \subset \overline\Omega$ including $\Gamma_n$ and $\vec u : \Omega \setminus \Gamma \to \mathbb R^3$ satisfying a boundary condition $\vec u |_{\partial \Omega} = \vec g$ associated with the external load $\vec g$ on (some part of) the boundary. Here $\vep(\vec u)$ is the symmetric part of the gradient matrix of $\vec u$, $\lambda, \mu > 0$ and $\mathcal{H}^2$ denotes the two-dimensional Hausdorff measure. Furthermore, concerning the mode III (i.e., anti-planar shear) crack growth, the displacement vector $\vec u = \vec u(x)$ is reduced to a scalar-valued function $u = u(x)$ of class $SBV(\Omega)$ (see~\cite{FraLar03}). In order to perform numerical analysis of the mode III crack propagation, $\mathcal F$ is often regularized as the Ambrosio-Tortorelli energy (see~\cite{AmTo90,AmTo92}),
$$
\mathcal F_\vep(u,z) = \dfrac \mu 2 \int_\Omega (1 - z)^2 |\nabla u|^2 \, \d x
+ \int_\Omega f u \, \d x
+ \int_\Omega \gamma(x) \left( \dfrac{|\nabla z|^2}{2 \vep} + \vep V(z) \right) \, \d x,
$$
where $u$ and $z$ stand for the deformation of the material and a phase parameter describing the  degree  of crack (e.g., $z = 1$ means ``completely cracked'' configuration), respectively, $V(\cdot)$ is a potential function, $\vep > 0$ is a relaxation parameter (which is also related to the thickness of the diffuse interface) and $\mu$ is a positive constant and $\gamma(x)$ denotes the \emph{fracture toughness} of the material. It is proved in~\cite{AmTo90,AmTo92} that $\mathcal F_\vep$ converges to the Francfort-Marigo energy in the sense of $\Gamma$-convergence as $\vep \to 0$. Quasi-static dynamics of the approximated brittle fracture model is also studied by introducing a constrained minimization scheme associated with $\mathcal F_\vep$ (see~\cite{Giacom05}). Here we stress again that the evolution of the phase parameter $z(x,t)$ is supposed to be monotone (i.e., non-decreasing in time).

A couple of nonlinear evolution equations have been also proposed to describe (or approximate) quasi-static evolution of brittle fractures.  Above all, Kimura and Takaishi~\cite{K-T10,T-K09} developed a crack propagation model for numerical simulation. Their model is derived as a \emph{double} gradient flow (i.e., in both variables $(u,z)$) for $\mathcal F_\vep(u,z)$:
\begin{alignat*}{3}
\alpha_1 u_t & = \mu \mathrm{div} \left( (1-z)^2 \nabla u \right) + f(x,t) \quad &\mbox{ in }& \Omega \times (0,\infty),\\
\alpha_2 z_t & = \left( \vep \mathrm{div} (\gamma(x) \nabla z) - \dfrac{\gamma(x)}\vep V'(z) + \mu |\nabla u|^2 (1-z) \right)_+ \quad &\mbox{ in }& \Omega \times (0,\infty)
\end{alignat*}
where $\alpha_1$, $\alpha_2$ are positive constants (related to numerical efficiency), together with boundary and initial conditions (see also~\cite{BaMi14}). Here we remark that the second equation of the system above includes the positive-part function in the right-hand side, in order to reproduce the non-decreasing (in time) evolution of the phase parameter $z(x,t)$.

\begin{remark}\label{R:iAC}
 {\rm  
 Equation \eqref{iAC} can be derived as an extreme case of the quasi-static model for the regularized energy. More precisely, let $V(\cdot)$ be a double-well potential, $V(s) = s^4/4 - \lam s^2/2$, to confine the phase parameter into an interval (see~\cite{VoyMoz13}). 
 Moreover, for (mathematical) simplicity, set $\gamma(x) \equiv 1$, $f \equiv 0$, $\vep = 0$ and take $\alpha_1 = 0$ and $\alpha_2 = 1$ to the double-gradient flow model. Testing the first equation by $u$ and integrating by parts, we see that
 $$
 (1 - z)^2|\nabla u|^2 = 0 \quad \mbox{ for a.e. } (x,t) \in \Omega \times (0,\infty),
 $$
 which means that either $(1-z)$ or $|\nabla u|$ is zero a.e.~in $\Omega \times (0,\infty)$. Hence the system is reduced to the single equation \eqref{iAC}.
 } 
\end{remark}

\subsection{Damage accumulation models}

Barenblatt and Prostokishin~\cite{BaPr93} proposed a damage accumulation model, which derives the following fully nonlinear parabolic PDE including the positive-part function:
$$
u_t = u^\alpha \left( u_{xx} + \lam u \right)_+ \ \mbox{ in } \ (a,b) \times (0,\infty)
$$
with parameters $\alpha > 1$, $\lam > 0$ to describe the evolution of damage factor, that is, an internal variable used in the Kachanov theory~\cite{Kachanov}. In this model, the positive-part function plays a role to impose the non-decreasing constraint on the evolution of the damage factor, since the time derivative of $u(x,t)$ is non-negative. Their model was mathematically studied by Bertsch and Bisegna in~\cite{BeBi97}, where the solvability of the initial-boundary value problem is proved in a classical framework and long-time behaviors of solutions are also investigated. In particular, it is proved that the \emph{regional} blow-up phenomena occur (i.e., the blow-up set of a solution is a subinterval of $(a,b)$; however, it is neither a point set nor the whole of the interval) under suitable assumptions on $\lambda$, $\alpha$ and the interval $(a,b)$ (see also~\cite{A14}). 

\subsection{Irreversible evolution equations governed by subdifferentials}

As is explained in \S \ref{S:I}, the strongly irreversible evolution can be also described in terms of the subdifferential operator $\partial \iI$ of the indicator function over the half-line. In what follows, we shall recall strongly irreversible evolution equations formulated in such a way. Let us start with a \emph{rate-independent} unidirectional flow along with the Ambrosio-Tortorelli energy (see Knees, Rossi and Zanini \cite{KRZ13,KRZ15} and references therein, e.g.,~\cite{EfMi06}). In~\cite{KRZ13,KRZ15}, they discussed the existence of solutions to the Cauchy problem for the rate-independent evolution equation,
$$
\partial \mathcal R(z_t) + \mathrm{D}_z \mathcal F_\vep(u,z) \ni 0, \quad 0 < t < T, \quad u = \arg \min_{v} \mathcal F_\vep(v,z),
$$
where $\mathrm{D}_z$ denotes a functional derivative (e.g., Fr\'echet derivative) of $\mathcal F_\vep$ with respect to the second variable $z$, with a $1$-positively homogeneous and unidirectional dissipation functional $\mathcal R$ given by
$$
\mathcal R(\eta) = \int_\Omega \kappa |\eta(x)| \, \d x + \iI(\eta(x))
		     \quad \mbox{ for } \ \eta \in L^1(\Omega)
		     $$
for some $\kappa > 0$ (more precisely, in~\cite{KRZ13}, a modified Ambrosio-Tortorelli energy is treated). 

Strongly irreversible evolution equations also appear in other topics. For instance, the following \emph{irreversible phase transition} model is proposed by Fr\'emond and studied in~\cite{Fr1,Fr2},
\begin{alignat*}{3}
&\theta_t - \theta \chi_t - \Delta \theta = \chi_t^2 \quad &\mbox{ in }& \ \Omega \times (0,\infty),\\
&\chi_t + \partial \iI (\chi_t) - \Delta \chi + \beta(\chi) \ni \theta - \theta_c \quad &\mbox{ in }& \ \Omega \times (0,\infty),
\end{alignat*}
where $\theta$ and $\chi$ denote the absolute temperature ($\theta_c$ is a transition temperature) and a phase parameter, respectively, and moreover, $\beta$ is a maximal monotone graph in $\mathbb R^2$. Due to the presence of the subdifferential term $\partial \iI(\chi_t)$, the evolution of $\chi$ is constrained to be non-decreasing. We refer the reader to~\cite{LSS02} and references therein for mathematical analysis of the model. Moreover, Aso and Kenmochi~\cite{AK05} (see also~\cite{AsFrK}) studied the existence of solutions for a quasivariational evolution inequality of reaction-diffusion type such as
\begin{alignat*}{4}
& \theta_t - \Delta \theta + k(\theta,w) = h(t,x) \quad &\mbox{ in }& \ \Omega \times (0,\infty),\\
& w_t + \partial I_{[g(\theta),\infty)}(w_t) - \Delta w + \ell(\theta,w) \ni q(t,x) \quad &\mbox{ in }& \ \Omega \times (0,\infty),
\end{alignat*}
where $k$ and $\ell$ are Lipschitz continuous functions in both variables, $g$ is a smooth nonnegative function and $h$ and $q$ are given functions in a suitable class. These systems are also reduced to \eqref{iAC} in an isothermal setting, i.e., $\theta = $ constant (with suitable assumptions). Furthermore, we also refer the reader to references~\cite{BoSc04,RR15} and references therein.

 Equations reviewed in this section have been studied mostly in view of well-posedness. On the other hand, qualitative and quantitative analysis on behaviors of solutions is still left to be open, since the equations are somewhat complicated and also have several different complexities. So the study on intrinsic phenomena arising from the strong irreversibility have not yet been fully pursued. In the present paper, we shall treat a simpler equation, \eqref{iAC}, but investigate various properties and behaviors of solutions as well as the well-posedness of (P) in order to find out intrinsic features of parabolic PDEs with the positive-part function. 

\section{Existence of $L^2$ solutions}\label{S:wp}

The $L^2(\Omega)$-solvability of (P) ($=$ \{\eqref{iAC}, \eqref{bc}, \eqref{ic}\}) can be ensured for smooth data by applying a general theory due to Barbu~\cite{Barbu75} and Arai~\cite{Arai}; more precisely, for any $u_0 \in H^2(\Omega) \cap H^1_0(\Omega) \cap L^6(\Omega)$, Problem (P) possesses at least one \emph{$L^2(\Omega)$-solution} $u = u(x,t)$ defined by
\begin{definition}\label{D:sol}
 A function $u \in C([0,\infty);L^2(\Omega))$ is said to be a \emph{solution} {\rm (}or an $L^2(\Omega)$-solution{\rm )} of {\rm (P)}, if the following conditions are all satisfied\/{\rm :}
 \begin{enumerate}
  \item $u$ belongs to $W^{1,2}(\delta,T;L^2(\Omega))$, $C([\delta,T];H^1_0(\Omega) \cap L^4(\Omega))$, $L^2(\delta,T;H^2(\Omega))$ and $L^6(\delta,T;L^6(\Omega))$ for any $0 < \delta < T < \infty$,
  \item there exists $\eta \in L^\infty(0,\infty;L^2(\Omega))$ such that
       \begin{equation}\label{EQ}
	u_t + \eta - \Delta u + u^3 - \lam u = 0, \quad \eta \in \partial \iI(u_t)
	 \ \mbox{ for a.e. } (x,t) \in \Omega \times (0,\infty)
       \end{equation}
	and $\eta =  - \big( \Delta u - u^3 + \lam u \big)_-  $ for a.e. $(x,t) \in \Omega \times (0,\infty)$. Hence $u$ also solves \eqref{iAC} a.e.~in $\Omega \times (0,\infty)$.
  \item $u(\cdot,0) = u_0$ a.e.~in $\Omega$.
 \end{enumerate}
\end{definition}

On the other hand, the uniqueness of solutions does not follow from general theories. Furthermore, by focusing on specific structures of the equation \eqref{pde}, we shall improve the result above on the $L^2(\Omega)$-solvability.  More precisely, we shall prove a \emph{smoothing effect} for (P), that is,  even if initial data belong to a closure of a set $D$ (of more regular functions), corresponding solutions belong to the set $D$ instantly.  To state more details, let us introduce a set
\begin{align*}
 D_r := \Big\{
 u \in H^2(\Omega) \cap H^1_0(\Omega) \cap L^6(\Omega)
 \colon
 \|(\Delta u - u^3 + \lam u)_-\|_2^2 \leq r
 \Big\}
\end{align*}
for each $r > 0$.  Here we stress that $D_r$ is an unbounded set. Indeed, let $z \in C^2(\Omega) \cap C(\overline\Omega)$ be the negative solution of the classical elliptic Allen-Cahn equation,
\begin{equation}\label{cAC-e}
-\Delta z + z^3 - \lam z = 0 \ \mbox{ in } \Omega, \quad z = 0 \ \mbox{ on } \partial \Omega.
\end{equation}
Then any multiple $w = cz$ of $z$ satisfies $\Delta w - w^3 + \lam w \geq 0$ a.e.~in $\Omega$, provided that $c \geq 1$. Then $w$ belongs to $D_r$, and therefore, $D_r$ is unbounded.  

Now, let us state a theorem on the well-posedness and smoothing effect.

\begin{theorem}[Well-posedness and smoothing effect]\label{T:ex}
 Let $r > 0$ be arbitrarily fixed.
 \begin{enumerate}
  \item[(i)] Let $u_0$ belong to the closure $\overline{D_r}^{L^2}$ of $D_r$ in $L^2(\Omega)$. Then {\rm (P)} admits a solution $u = u(x,t)$ satisfying
 \begin{align*}
   u \in L^2(0,T;H^1_0(\Omega)) \cap L^4(0,T;L^4(\Omega)),  \\
   t^{1/2} u_t \in L^2(0,T;L^2(\Omega)), \quad t u_t \in L^2(0,T; H^1_0 (\Omega)), \\
   t^{1/2} u \in L^\infty(0,T;H^1_0(\Omega)) \cap L^2(0,T;H^2(\Omega)),  \\
   t^{1/4} u \in L^\infty(0,T;L^4(\Omega)), \quad t^{1/6} u \in L^6(0,T;L^6(\Omega)), \\
   t^{1/3} u \in L^\infty(0,T;L^6(\Omega)), \quad tu \in L^\infty(0,T;H^2(\Omega)),\\
  u \in C_w((0,T];H^2(\Omega) \cap L^6(\Omega))  \cap C((0,T];H^1_0(\Omega) \cap L^4(\Omega)), \\
  u(t) \in D_r \ \mbox{ for all } \ t \in (0,T]
\end{align*}
for any $0 < T < \infty$.
  \item[(ii)] If $u_0$ belongs to the closure $\overline{D_r}^{H^1_0 \cap L^4}$ of $D_r$ in $H^1_0(\Omega) \cap L^4(\Omega)$, then it further holds that
 \begin{align*}
   u \in W^{1,2}(0,T;L^2(\Omega)) \cap L^2(0,T;H^2(\Omega)) \cap L^6(0,T;L^6(\Omega)),  \\
  u \in C([0,T];H^1_0(\Omega) \cap L^4(\Omega)), \quad
  t^{1/2} u_t \in L^2(0,T; H^1_0 (\Omega)),\\
   t^{1/2} u \in L^\infty(0,T;H^2(\Omega)), \quad t^{1/6} u \in L^\infty(0,T;L^6(\Omega)) 
 \end{align*}
 for any $0 < T < \infty$.
  \item[(iii)] If $u_0 \in H^2(\Omega) \cap H^1_0(\Omega) \cap L^6(\Omega)$, then $u \in C_w([0,T];H^2(\Omega) \cap L^6(\Omega))$ and $u_t \in L^2(0,T; H^1_0  (\Omega))$ for any $0<T<\infty$.
  \item[(iv)] Let $T > 0$ be fixed. For $N \leq 3$, $L^2(\Omega)$-solutions $u$ belonging to  the class
	      \begin{equation}\label{uniq-class}
		C([0,T];H^1_0(\Omega)) 
	      \end{equation}
	       are uniquely determined by initial data $u_0 \in H^1_0(\Omega)$ and they continuously depend on initial data $u_0$ in the following sense\/{\rm :} let $u_i$ be the unique solution of {\rm (P)} for the initial data $u_{0,i} \in H^1_0(\Omega)$ {\rm (}for $i = 1,2${\rm )} and set $w := u_1 - u_2$. Then there exists a constant $C > 0$ which depends only on $\sup_{t \in (0,T)}\|\nabla u_i(t)\|_2$ {\rm (}$i = 1,2${\rm )} such that
 \begin{align}\label{conti-dep}
 \|w(t)\|_2^2 + \|\nabla w(t)\|_2^2 \leq \left(\|w(0)\|_2^2 + \|\nabla w(0)\|_2^2\right) e^{Ct}
 \end{align}
	      for all $t \in [0,T]$.
  \item[(v)] For $N \leq 4$, $L^2(\Omega)$-solutions belonging to \eqref{uniq-class} and
	     \begin{equation}\label{uniq-class-2}
	      L^2(0,T;H^2(\Omega)) \cap L^6(0,T;L^6(\Omega))
	     \end{equation}
	     are uniquely determined by initial data $u_0 \in H^1_0(\Omega)$ and \eqref{conti-dep} holds true with a constant $C$ depending only on $\|u_i\|_{L^2(0,T;H^2(\Omega))}$ and $\|u_i\|_{L^6(0,T;L^6(\Omega))}$ {\rm (}$i = 1,2${\rm )}. 
  \item[(vi)] Furthermore, for general $N$, $L^2(\Omega)$-solutions belonging to $L^\infty(\Omega \times (0,T))$  as well as \eqref{uniq-class}  are uniquely determined  by initial data $u_0 \in H^1_0(\Omega)$  and they satisfy \eqref{conti-dep} with a constant $C$ which depends only on uniform bounds $\|u_i\|_{L^\infty(\Omega \times (0,T))}$ of solutions $u_i$ {\rm (}$i = 1,2${\rm )}.
 \end{enumerate}
\end{theorem}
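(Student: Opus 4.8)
The plan is to treat the existence/smoothing assertions (i)--(iii) and the uniqueness/continuous-dependence assertions (iv)--(vi) by two separate mechanisms: a ladder of (partial) energy estimates passed through an approximation scheme for the former, and a single energy argument on the difference of two solutions for the latter, whose quantitative form is dictated by the space dimension. Throughout I write $f := \Delta u - u^3 + \lam u = -E'(u)$, so that \eqref{mu} reads $\eta = -f_-$ and the equation \eqref{EQ} becomes $u_t = f_+$, with the pointwise orthogonality $(\eta,u_t) = (f_-,f_+) = 0$ a.e.

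For (i)--(iii) I would start from the solvability for smooth data $u_0 \in H^2 \cap H^1_0 \cap L^6$ furnished by the Barbu--Arai theory, and approximate a general datum by $u_{0,n} \in D_r$ converging to $u_0$ in $L^2$ for (i) (resp.\ in $H^1_0 \cap L^4$ for (ii), resp.\ in $H^2 \cap L^6$ for (iii)). The decisive point --- the one I expect to be the main obstacle --- is a \emph{$D_r$-invariance} (smoothing) estimate: the map $t \mapsto \|\eta(t)\|_2^2 = \|f_-(t)\|_2^2$ is non-increasing along solutions. Formally this follows by differentiating $\tfrac12\|f_-\|_2^2$ and using that $f_-$ is supported on $\{f<0\} = \{u_t = 0\}$, so the reaction terms drop out and only $-(f_-,\Delta u_t)$ survives, which is $\leq 0$ since $\nabla u_t$ vanishes where $u_t \equiv 0$; but because the positive-part is merely Lipschitz and $\partial\iI(u_t)$ sits inside the equation, the classical gradient-flow regularization machinery does not apply and this monotonicity must be justified through the regularized problems, which is the heart of the matter and is deferred to Appendix \ref{S:Aex}. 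Granting it, $u_{0,n} \in D_r$ forces $u_n(t) \in D_r$ for all $t$, i.e.\ $\|f_{n,-}(t)\|_2 \leq \sqrt r$ uniformly.

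With the uniform $D_r$-bound in hand the remaining estimates form a hierarchy. Testing \eqref{EQ} by $u_n$ and using $(\eta_n,u_n) \leq \sqrt r\,\|u_n\|_2$ yields, by Gronwall, a bound for $u_n$ in $L^\infty(0,T;L^2) \cap L^2(0,T;H^1_0) \cap L^4(0,T;L^4)$ depending only on $\|u_{0,n}\|_2$ and $r$; this gives the first line of (i) and, in particular, $\int_0^T E(u_n)\,\d t < \infty$ uniformly. Testing \eqref{EQ} by $u_{n,t}$ gives the partial energy identity $\|u_{n,t}\|_2^2 + \tfrac{\d}{\d t}E(u_n) = 0$ (the multiplier term vanishing by orthogonality); multiplying by $t$ and integrating converts the finite $\int_0^T E(u_n)\,\d t$ into $t^{1/2}u_{n,t} \in L^2(L^2)$ together with a bound for $tE(u_n(t))$, hence $t^{1/2}u_n \in L^\infty(H^1_0)$ and $t^{1/4}u_n \in L^\infty(L^4)$. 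A second, higher-order estimate --- formally obtained by differentiating the equation in time and testing with $u_{n,t}$ weighted by powers of $t$, the contribution of $\partial\iI(u_{n,t})$ again being controlled through the regularized problems --- yields $tu_{n,t} \in L^2(H^1_0)$; elliptic regularity applied to $-\Delta u_n = f_n - u_n^3 + \lam u_n$ then upgrades $u_n$ to $t^{1/2}u_n \in L^2(H^2)$ and $tu_n \in L^\infty(H^2)$, while Sobolev embedding gives $t^{1/3}u_n \in L^\infty(L^6)$ and $t^{1/6}u_n \in L^6(L^6)$. Passing to the limit by Aubin--Lions compactness, identifying $\eta$ through \eqref{mu}, and using lower semicontinuity of $\|f_-\|_2^2$ to secure $u(t) \in D_r$, produces a solution with all the weighted bounds of (i); the weak and strong continuity follow from the estimates and the equation. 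For (ii) the energies $E(u_{0,n})$ are uniformly bounded, so the \emph{un}-weighted energy identity already gives $u_t \in L^2(L^2)$, $u \in L^2(H^2) \cap L^6(L^6) \cap C([0,T];H^1_0 \cap L^4)$, and the same second estimate (now started from finite energy) gives $t^{1/2}u_t \in L^2(H^1_0)$, $t^{1/2}u \in L^\infty(H^2)$, $t^{1/6}u \in L^\infty(L^6)$; for (iii) full initial regularity renders the second estimate un-weighted, yielding $u_t \in L^2(H^1_0)$ and $u \in C_w([0,T];H^2 \cap L^6)$.

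For (iv)--(vi) I would argue on $w := u_1 - u_2$, which solves $w_t + (\eta_1 - \eta_2) - \Delta w + (u_1^3 - u_2^3) - \lam w = 0$. The obstacle here is that testing by $w$ cannot control the multiplier difference $(\eta_1 - \eta_2, w)$, whose sign is indeterminate; instead I test by $w_t$, so that monotonicity of $\partial\iI$ gives $(\eta_1 - \eta_2, w_t) \geq 0$ and this term is discarded. After Young's inequality this leaves $\tfrac12\|w_t\|_2^2 + \tfrac12\tfrac{\d}{\d t}\|\nabla w\|_2^2 \leq \|u_1^3 - u_2^3\|_2^2 + \lam^2\|w\|_2^2$, which together with $\tfrac{\d}{\d t}\|w\|_2^2 \leq \|w\|_2^2 + \|w_t\|_2^2$ reduces \eqref{conti-dep} to a Gronwall inequality, \emph{provided} the cubic difference $\|u_1^3 - u_2^3\|_2 = \|(u_1^2 + u_1 u_2 + u_2^2)w\|_2$ is controlled. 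This is exactly where the dimensional split enters: for (iv), $N \leq 3$ gives $H^1_0 \hookrightarrow L^6$ and hence $\|u_i^2 w\|_2 \leq \|u_i\|_6^2\|\nabla w\|_2$, so $C$ depends on $\sup_t\|\nabla u_i\|_2$; for (v), $N \leq 4$ forces the extra regularity \eqref{uniq-class-2}, controlling the cubic term after time-integration through $\|u_i\|_{L^2(H^2)}$ and $\|u_i\|_{L^6(L^6)}$; for (vi), a uniform $L^\infty$-bound makes $\|u_1^3 - u_2^3\|_2 \leq C\|w\|_2$ immediate in any dimension. In each case the computation is first carried out on $[\delta,T]$, where Definition \ref{D:sol} guarantees $w_t \in L^2(L^2)$, and then extended to $[0,T]$ using the assumed continuity in $H^1_0$; uniqueness is the special case $w(0) = 0$.
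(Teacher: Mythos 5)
Your proposal follows essentially the same route as the paper: for (i)--(iii), Barbu--Arai solvability for smooth data combined with Yosida-regularized approximations, the non-increase of $t \mapsto \|\eta(t)\|_2^2$ (the paper's Energy Inequality 3, likewise justified only through the regularized problems in Appendix \ref{S:Aex}) as the $D_r$-invariance mechanism, the same $t$-weighted energy hierarchy with elliptic regularity, approximation of $u_0$ in the respective topology, and Aubin--Lions compactness with Minty's trick; and for (iv)--(vi), the identical test-by-$w_t$ argument discarding $(\eta_1-\eta_2,w_t)\geq 0$ by monotonicity, with the same dimensional trichotomy (Sobolev embedding for $N\leq 3$, interpolation through the class \eqref{uniq-class-2} for $N\leq 4$, $L^\infty$ bounds in general) and the same $\delta \to 0$ extension. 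The only cosmetic deviation is that you phrase the higher-order estimate as testing the time-differentiated equation by $u_t$ rather than testing \eqref{pde} by $(-\Delta u + u^3 - \lam u)_t$ as in the paper's Energy Inequality 6, but both computations appear in the paper's appendix and yield the same bounds.
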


\begin{remark}[Invariance of the set $D_r$]{\rm
 Thanks to (i), the set $D_r$ (and its closures) turns out to be invariant under the evolution generated by (P) (see also \eqref{e3} below). Hence $D_r$ will play a role of a phase space in order to investigate the dynamics of solutions to (P) (see \S \ref{S:ps} and \S \ref{S:At}).
}\end{remark}

\begin{remark}[Difference between $D_r$ and its closure]\label{R:se}
 {\rm
To observe how smoothing effect occurs (in Theorem \ref{T:ex}), let us consider the following two examples (with $N = 1$, $\Omega = (-1,1)$ and $\lam = 1$ for simplicity):
\begin{enumerate}
  \item[(i)] Set $u_0(x) = |x| - 1 \in H^1_0(-1,1) \setminus H^2(-1,1)$. Then define $u_{0,\vep} \in W^{2,\infty}(-1,1)$ by
	    $$
	    u_{0,\vep}(x) = \begin{cases}
			     |x| - 1 &\mbox{ if } |x| > \vep,\\
			     \frac 1 \vep \frac{x^2}2 + \frac \vep 2 - 1
			     &\mbox{ if } |x| \leq \vep
			    \end{cases}
	    $$
	    for $\vep > 0$. Then one observes that
	    $$
	    u_{0,\vep}'' - u_{0,\vep}^3 + u_{0,\vep}
	    = \begin{cases}
	       - u_{0,\vep}^3 + u_{0,\vep} < 0 &\mbox{ if } |x| > \vep,\\
	       \frac 1 \vep \underbrace{- u_{0,\vep}^3 + u_{0,\vep}}_{\text{close to zero}} > 0 &\mbox{ if } |x| \leq \vep
	      \end{cases}
	    $$
	    for $\vep > 0$ small enough. Therefore
	    $$
	    \| (u_{0,\vep}'' - u_{0,\vep}^3 + u_{0,\vep})_-\|_2^2
	    = \int_{|x| > \vep} (u_{0,\vep}^3 - u_{0,\vep})^2 \, \d x
	    \leq \|u_0^3 - u_0\|_2^2 =: r < + \infty.
	    $$
	     Moreover, one can check that $u_{0,\vep} \to u_0$ strongly in $H^1_0(-1,1)$. Hence $u_0$ belongs to the closure of $D_r$ in $H^1_0(-1,1)$. However, $u_0$ does not belong to $D_r$ ($\subset H^2(-1,1)$). On the other hand, by Theorem \ref{T:ex}, $u(x,t)$ belongs to (at least) $H^2(-1,1) \subset C^{1+\alpha}([-1,1])$ at any $t > 0$. Therefore the sharp edge of $u_0(x)$ at $x = 0$ instantly vanishes.
 \item[(ii)] Set $u_0(x) \equiv -1 \in L^2(-1,1) \setminus H^1_0(-1,1)$ (hence $u_0$ violates the homogeneous Dirichlet condition) and define approximated data by
	     $$
	     u_{0,\vep}(x) = \begin{cases}
			      -1 &\mbox{ if } |x| < 1 - \vep,\\
			      -1 + \frac 1 {\vep^2} (|x| - 1 + \vep)^2 &\mbox{ if } |x| \geq 1 - \vep.
			     \end{cases}
	     $$
	     Then $u_{0,\vep} \in H^2(-1,1) \cap H^1_0(-1,1)$ and $u_{0,\vep} \to u_0$ strongly in $L^2(-1,1)$ as $\vep \to 0$. Moreover, we observe that
	     $$
	     u_{0,\vep}'' - u_{0,\vep}^3 + u_{0,\vep}
	     = \begin{cases}
		0 &\mbox{ if } |x| < 1 - \vep,\\
		\frac 2 {\vep^2} - u_{0,\vep}^3 + u_{0,\vep} > 0 &\mbox{ if } |x| \geq 1 - \vep,
	       \end{cases}
	     $$
	     which yields $\|(u_{0,\vep}'' - u_{0,\vep}^3 + u_{0,\vep})_-\|_2^2 = 0$. Hence $u_0$ belongs to the closure of $D_r$ in $L^2(-1,1)$ (but $u_0 \not\in D_r$). Since the solution to (P) satisfies the boundary condition $u(\pm 1,t) = 0$ for any $t > 0$ by Theorem \ref{T:ex}, the values of $u(\pm 1, t)$ jump to $0$ from $-1$ at $t = 0$.
\end{enumerate}
 }
\end{remark}

\begin{proof}[Proof of {\rm (iv)}--{\rm (vi)}]
Before starting a proof for (iv), we remark that the uniqueness of solutions for (P) is not ensured by the abstract results (e.g., Arai~\cite{Arai}, Colli-Visintin~\cite{CV}, Colli~\cite{Colli}, Visintin~\cite{Visintin}). For instance, in~\cite{CV,Colli}, the uniqueness is proved for (abstract) doubly nonlinear equations, $A(u_t) + B(u) \ni 0$, provided that either $A$ or $B$ is linear. 

 Fix $\delta > 0$ arbitrarily.  Let $u_i$ ($i = 1,2$) be two solutions for (P) belonging  to \eqref{uniq-class}  with initial data $u_{0,i} \in H^1_0(\Omega)$ ($i = 1,2$) and set $w := u_1 - u_2$. Then
$$
w_t + \eta_1 - \eta_2 - \Delta w + u_1^3 - u_2^3 = \lam w,
$$
where $\eta_i$ is a section of $\partial \iI(\partial_t u_i)$ for $i = 1,2$. Test both sides by $w_t$ and employ the monotonicity of $\partial \iI$ to find that
$$
\|w_t\|_2^2 + \dfrac 1 2 \dfrac \d {\d t} \|\nabla w\|_2^2
\leq \dfrac \lam 2 \dfrac \d {\d t} \|w\|_2^2 - \left( u_1^3 - u_2^3, w_t\right)
\leq \dfrac \lam 2 \dfrac \d {\d t} \|w\|_2^2 + C \|u_1^3 - u_2^3\|_2^2 + \dfrac 1 2 \|w_t\|_2^2
 $$
  for a.e.~$t \in (\delta,T)$ (see Definition \ref{D:sol}).  Here note that, for any $\vep > 0$, there exists a constant $C_\vep > 0$ such that
$$
\dfrac 1 2 \dfrac \d {\d t} \|w\|_2^2
= (w_t, w) \leq \vep \|w_t\|_2^2 + C_\vep \|w\|_2^2.
$$
Therefore choosing $\vep > 0$ small enough, one obtains
\begin{equation}\label{uniq-0}
\alpha \dfrac \d {\d t} \|w\|_2^2 + \dfrac 1 2 \dfrac \d {\d t} \|\nabla w\|_2^2
\leq C_\vep \|w\|_2^2 + C \|u_1^3 - u_2^3\|_2^2
\end{equation}
 for some $\alpha > 0$. In case $N \leq 3$, thanks to the Mean-Value Theorem and Sobolev's embedding $H^1_0(\Omega) \hookrightarrow L^6(\Omega)$, it follows that
\begin{equation}\label{loc-Lip}
\|u_1^3 - u_2^3\|_2^2 \leq C \left( \|\nabla u_1\|_2^4 + \|\nabla u_2\|_2^4 \right)\|\nabla w\|_2^2.
\end{equation}
Thus Gronwall's inequality yields
\begin{equation}\label{cdi}
\|w(t)\|_2^2 + \|\nabla w(t)\|_2^2 \leq \left(\|w( \delta )\|_2^2 + \|\nabla w( \delta )\|_2^2\right) e^{C_0  (t - \delta) } \quad \mbox{ for all } \ t \geq \delta,
\end{equation}
where $C_0$ is a constant depending only on  $\sup_{t \in (0,T)}\|\nabla u_i(t)\|_2$. From the fact that $u_i \in C([0,T];H^1_0(\Omega))$, one can pass to the limit as $\delta \to 0_+$ and obtain \eqref{cdi} with $\delta = 0$.  If $w(0) = 0$, then we conclude that $w \equiv 0$, i.e., $u_1 \equiv u_2$. This completes a proof of (iv).

 Concerning (v), for any $3 \leq N \leq 5$ (then $H^2(\Omega) \subset L^{2N}(\Omega)$), by Gagliardo-Nirenberg's inequality we infer that
 \begin{align*}
  \|u_1^3 - u_2^3\|_2^2 &\leq C \left( \|u_1\|_{2N}^4 + \|u_2\|_{2N}^4 \right) \|\nabla w\|_2^2\\
  &\leq C \left( \|u_1\|_{H^2(\Omega)}^{4\theta} \|u_1\|_6^{4(1-\theta)} +  \|u_2\|_{H^2(\Omega)}^{4\theta} \|u_2\|_6^{4(1-\theta)}\right) \|\nabla w\|_2^2,
 \end{align*}
 where $\theta$ is given by
 $$
 \dfrac 1 {2N} = \theta \left( \dfrac 1 2 - \dfrac 2 N\right) + \frac{1-\theta}6.
 $$
 Furthermore, assuming $N \leq 4$, one finds that
 $$
 2 \theta + \dfrac{2(1-\theta)}3 \leq 1,
 $$
 which yields
 $$
 \|u_i\|_{H^2(\Omega)}^{4\theta} \|u_i\|_6^{4(1-\theta)}
 \in L^1(0,T)
 $$
 for $i = 1,2$. Therefore by Gronwall's inequality, we can obtain the desired conclusion. 
 
 To prove (vi), the argument above can be also generalized for general dimension $N$ by assuming the boundedness of solutions, i.e., $u_i \in L^\infty(Q)$ ($i=1,2$) with $Q = \Omega \times (0,T)$, and by replacing \eqref{loc-Lip} by
 $$
\|u_1^3 - u_2^3\|_2^2 \leq C \left( \|u_1\|_\infty^4 + \|u_2\|_\infty^4 \right)\|w\|_2^2.
 $$
 Therefore for general $N$, bounded solutions are uniquely determined by initial data and a similar inequality to \eqref{cdi} holds with a constant $C_0$ depending on $\|u_i\|_{L^\infty(Q)}$. Thus (vi) is proved.
\end{proof}

Before giving a (sketch of) proof for the existence part (i)--(iii) of Theorem \ref{T:ex}, we shall (formally) derive energy estimates in the next section. 

 \section{Energy inequalities and partial energy-dissipation estimates}\label{S:e}

In this section, we first collect key energy inequalities, which will play a crucial role later; in particular, we shall derive \emph{partial energy-dissipation estimates} and apply them to construct a global attractor in a peculiar setting (cf. as we mentioned in \S \ref{S:I}, due to the strong irreversibility, no absorbing set and no global attractor exist in any $L^p$-spaces). In order to derive (some of) them in an intuitive way, we here carry out formal arguments only. Secondly, we shall give a sketch of proof for the existence part of Theorem \ref{T:ex}. In Appendix \ref{S:Aex}, we shall give detailed proofs for the existence part and energy inequalities.

\bigskip
\noindent
{\bf Energy Inequality 1.} Test \eqref{pde} by $u_t$ and employ the relation $(\eta,u_t) = 0$ for any $\eta \in \partial \iI(u_t)$ to see that
\begin{equation}\label{e1}
 \|u_t\|_2^2 + \dfrac{\d}{\d t} E(u(t)) = 0 \quad \mbox{ a.e.~in } \ (0,\infty),
\end{equation}
where $E : H^1_0(\Omega) \cap L^4(\Omega) \to \mathbb R$ is an energy functional given by
$$
E(w) := \dfrac 1 2 \|\nabla w\|_2^2 + \dfrac 1 4 \|w\|_4^4 - \dfrac \lam 2 \|w\|_2^2 \quad \mbox{ for } \ w \in H^1_0(\Omega) \cap L^4(\Omega).
$$
Since $E$ is coercive, one can observe that
\begin{equation}\label{e1-1}
\int^T_0 \|u_t\|_2^2 \, \d t + \sup_{t \in [0,T]} \left( \|\nabla u\|_2^2 + \|u\|_4^4 \right) \leq C \left( E(u_0) + 1 \right)
\end{equation}
for any $T > 0$.  Hence for $u_0 \in H^1_0(\Omega) \cap L^4(\Omega)$, (if a solution exists, then) one can expect that $u \in W^{1,2}(0,T;L^2(\Omega)) \cap L^\infty(0,T;H^1_0(\Omega) \cap L^4(\Omega))$.  Multiplying \eqref{e1} by $t$, we also have
\begin{equation}\label{e1-2}
 t \|u_t\|_2^2 + \dfrac{\d}{\d t} \big( t E(u(t)) \big) = E(u(t)) \quad \mbox{ a.e.~in } \ (0,\infty).
\end{equation}

\bigskip
\noindent
{\bf Energy Inequality 2.} The following is a formal computation. Differentiate both sides of \eqref{pde} in $t$ and set $v = u_t$. Then we have
\begin{equation}\label{pde:v}
 v_t + \eta_t - \Delta v + 3 u^2 v = \lam v \ \mbox{ in } \ \Omega \times (0,\infty),
\end{equation}
where $\eta$ is a section of $\partial \iI(v)$. Test both sides by $v$. It follows that
$$
\dfrac 1 2 \dfrac \d {\d t} \|v\|_2^2 + \dfrac \d {\d t} \iI^*(\eta) + \|\nabla v\|_2^2
+ 3 \int_\Omega u^2 v^2 \, \d x = \lam \|v\|_2^2,
$$
where $\iI^*$ stands for the convex conjugate of $\iI$, i.e.,
$$
\iI^*(\sigma) = \sup_{s \in \mathbb R} \left(s \sigma - \iI(s)\right) = \sup_{s \geq 0} s \sigma = I_{(-\infty,0]}(\sigma).
 $$
 Here we used the fact $(\eta_t , v) = (\d / \d t) \iI^*(\eta)$ by the relation $v \in \partial \iI^*(\eta)$. Moreover, we note that $\iI^*(\eta) = 0$.

Now, for each potential function $V = V(x)$, let us denote by $\lambda_\Omega(V)$ the first eigenvalue of the Schr\"odinger operator $- \Delta + V(x)$ over $\Omega$ equipped with the homogeneous Dirichlet boundary condition. If $u_0 \geq 0$, then one observes that
$$
\|\nabla v\|_2^2 + 3 \int_\Omega u^2 v^2 \, \d x
\geq \|\nabla v\|_2^2 + 3 \int_\Omega u_0^2 v^2 \, \d x
\geq \lambda_\Omega (3 u_0^2)\|v\|_2^2.
$$
Hence
$$
\dfrac 1 2 \dfrac \d {\d t} \|v\|_2^2 
+ \left( \lambda_\Omega(3u_0^2) - \lam \right) \|v\|_2^2 \leq 0.
$$
In addition, assuming $\lambda_\Omega (3 u_0^2) > \lam$, one can obtain the exponential decay estimate for $v = u_t$,
\begin{equation}\label{e2}
\|v(t)\|_2^2 \leq \|v_0\|_2^2 \exp \left( -2 ( \lambda_\Omega(3u_0^2) - \lam)t \right)
\end{equation}
for all $t > 0$. Here we note that $v_0$ corresponds to $(\Delta u_0 - u_0^3 + \lam u_0)_+$. Exponential decay estimate \eqref{e2} will be used in Corollary \ref{C:exp-conv} (see also Remark \ref{R:exp-conv} in \S \ref{S:conv}).

\bigskip
\noindent
{\bf Energy Inequality 3.}  The following argument will play a key role to overcome difficulties arsing from the doubly nonlinearity of \eqref{pde} and enable us to establish \emph{partial energy-dissipation estimates}.  Formally test \eqref{pde:v} by $\eta$ to find that
$$
 \dfrac \d {\d t} \iI(v) + \dfrac 1 2 \dfrac \d {\d t} \|\eta\|_2^2 + (-\Delta v, \eta) + 3 \int_\Omega u^2 v \eta \, \d x = \lam (v, \eta).
$$
Note that $v \eta \equiv 0$,  $\iI(v) = 0$ a.e.~in $\Omega \times (0,\infty)$ and $(-\Delta v, \eta) \geq 0$. It follows that
$$
\dfrac 1 2 \dfrac \d {\d t} \|\eta\|_2^2 \leq 0,
$$
which implies that
\begin{equation}\label{e3}
\|\eta(t)\|_2^2 \leq \|\eta(s)\|_2^2 \quad \mbox{ for  a.e.  } \ 0 \leq s \leq t < \infty.
\end{equation}
Here we recall that $\eta(0) = \eta_0 :=  - (\Delta u_0 - u_0^3 + \lam u_0)_- $.
Likewise, multiplying \eqref{pde:v} by $|\eta|^{p-2}\eta \in \partial \iI(v)$, one can also derive
$$
\|\eta(t)\|_p \leq \|\eta(s)\|_p \quad \mbox{ for  a.e.  } \ 0 \leq s \leq t < \infty,
$$
when $\eta(0) \in L^p(\Omega)$, for any $p \in (1,\infty)$, and hence,
$$
\|\eta(t)\|_\infty \leq \|\eta(s)\|_\infty \quad \mbox{ for  a.e.  } \ 0 \leq s \leq t < \infty,
$$
provided that $\eta(0) \in L^\infty(\Omega)$.

\bigskip
\noindent
{\bf Energy Inequality 4.} Testing \eqref{pde} by $u$, we have
$$
\dfrac 1 2 \dfrac \d {\d t} \|u\|_2^2 + \|\nabla u\|_2^2 + \|u\|_4^4
= \lam \|u\|_2^2 - (\eta, u)
\leq \lam \|u\|_2^2 + \|\eta\|_2 \|u\|_2.
$$
By H\"older and Young inequalities and \eqref{e3} with $s = 0$ and $\eta(0) = \eta_0$, we further derive that
\begin{equation}\label{e4}
\dfrac 1 2 \dfrac \d {\d t} \|u\|_2^2 + \|\nabla u\|_2^2 + \dfrac 1 2 \|u\|_4^4
\leq C_1 (1 + \|\eta_0\|_2^{4/3})
\end{equation}
for some constant $C_1 > 0$ depending only on $|\Omega|$ and $\lam$. Integration of both sides over $(0,T)$ yields
\begin{equation}\label{e4-0}
 \dfrac 1 2 \|u(T)\|_2^2 + \int^T_0 \left( \|\nabla u\|_2^2 + \dfrac 1 2 \|u\|_4^4 \right) \, \d t \leq C_1 T (1 + \|\eta_0\|_2^{4/3}) + \dfrac 1 2 \|u_0\|_2^2
\end{equation}
for any $T > 0$.  Thus one expects that $u \in L^2(0,T;H^1_0(\Omega)) \cap L^4(0,T;L^4(\Omega))$ for $u_0 \in \overline{D_r}^{L^2}$ (then $\|\eta_0\|_2 \leq r < \infty$).  Moreover, it follows from \eqref{e1-2} and \eqref{e4-0} that
\begin{equation}\label{e4-2}
 \int^T_0 t \|u_t\|_2^2 \, \d t + T E(u(T)) \leq \dfrac{C_1} 2 T \left( 1 + \|\eta_0\|_2^{4/3} \right) + \dfrac 1 4 \|u_0\|_2^2
\end{equation}
for any $T > 0$.  It also implies that $t^{1/2} u_t \in L^2(0,T;L^2(\Omega))$, $t^{1/2}u \in L^\infty(0,T;H^1_0(\Omega))$ and $t^{1/4}u \in L^\infty(0,T;L^4(\Omega))$ whenever $u_0 \in \overline{D_r}^{L^2}$.  

We next derive a \emph{partial energy-dissipation estimate}. Assume that $\|\eta_0\|_2^2 \leq r$ for some $r > 0$. Then combining \eqref{e4} with \eqref{e1}, one finds that
\begin{equation}\label{e4-1}
 \|u_t\|_2^2 + \dfrac{\d}{\d t} \phi(t) + 2 \lam \phi(t) \leq
  \lam C_1 (1 + r^{2/3}) =: C_r,
\end{equation}
where $\phi : H^1_0(\Omega) \cap L^4(\Omega) \to \mathbb R$ is a functional given by 
$$
\phi(t) := \dfrac 1 2 \|\nabla u(t)\|_2^2 + \dfrac 1 4  \|u(t)\|_4^4.
$$
Therefore we conclude that
\begin{equation}\label{diss}
\phi(t) \leq \dfrac{C_r}{2\lam} + e^{-2\lam t} \left[ \phi(0) - \dfrac{C_r}{2\lam} \right] \quad \mbox{ for all } \ t \geq 0,
\end{equation}
which will play an important role to construct an absorbing set in \S \ref{S:ps}.  Here it is noteworthy that $C_r$ is independent of $u_0$ belonging to $\overline{D_r}^{H^1_0 \cap L^4}$; however, $C_r$ cannot be chosen uniformly for all $r > 0$. Hence \eqref{diss} can be regarded as a \emph{partial} energy-dissipation estimate.

\bigskip
\noindent
{\bf Energy Inequality 5.} Test \eqref{pde} by $-\Delta u + u^3 - \lam u$ to get
$$
\dfrac \d {\d t} E(u(t)) - \|\eta\|_2^2 + \|-\Delta u + u^3 - \lam u\|_2^2 = 0.
$$
Here we used the fact that $\eta =  - (\Delta u - u^3 + \lam u)_- $. Combining this with \eqref{e3} where $s = 0$ and $\eta(0) = \eta_0$, one has
\begin{equation}\label{e5}
\dfrac \d {\d t} E(u(t)) + \|-\Delta u + u^3 - \lam u\|_2^2 \leq \|\eta_0\|_2^2 \quad \mbox{ a.e.~in } (0,\infty),
\end{equation}
which implies
\begin{equation}\label{e5-1}
E(u(T)) + \int^T_0 \|-\Delta u + u^3 - \lam u\|_2^2 \, \d t \leq T \|\eta_0\|_2^2 + E(u_0)
\end{equation}
for any $T > 0$.  Thus we infer that $u \in L^2(0,T;H^2(\Omega)) \cap L^6(0,T;L^6(\Omega))$, provided that $u_0 \in \overline{D_r}^{H^1_0 \cap L^4}$.  Furthermore, it also follows from \eqref{e5} that
\begin{equation}\label{e5-2}
 T E(u(T)) + \int^T_0 t \|-\Delta u + u^3 - \lam u\|_2^2 \, \d t
  \leq \int^T_0 E(u(t)) \, \d t + \dfrac{T^2}2 \|\eta_0\|_2^2,
\end{equation}
 which along with \eqref{e4-0} implies $t^{1/2}u \in L^2(0,T;H^2(\Omega))$ and $t^{1/6} u \in L^6(0,T;L^6(\Omega))$ if $u_0 \in \overline{D_r}^{L^2}$. 

\bigskip
\noindent
{\bf Energy Inequality 6.} The following argument is also formal; indeed, the differentiability (in $t$) of $-\Delta u + u^3 - \lam u$ is not supposed in Definition \ref{D:sol}. Test \eqref{pde} by $(-\Delta u + u^3 - \lam u)_t$. Then we observe that
$$
\left(u_t, (-\Delta u + u^3 - \lam u)_t \right)
= \|\nabla u_t\|_2^2 + 3 \int_\Omega u^2 u_t^2 \, \d x - \lam \|u_t\|_2^2
$$
and
$$
\left(\eta, (-\Delta u + u^3 - \lam u)_t \right) \geq 0.
$$
Here we used that fact that $(\eta, - \Delta u_t) \geq 0$ and $\eta u_t \equiv 0$ a.e.~in $\Omega \times (0,\infty)$. Therefore
$$
\|\nabla u_t\|_2^2 + 3 \int_\Omega u^2 u_t^2 \, \d x
+ \dfrac 1 2 \dfrac \d {\d t} \|\Delta u - u^3 + \lam u\|_2^2
\leq \lam \|u_t\|_2^2.
$$
Furthermore, by \eqref{e1},
$$
\|\nabla u_t\|_2^2 + 3 \int_\Omega u^2 u_t^2 \, \d x
+ \dfrac 1 2 \dfrac \d {\d t} \|\Delta u - u^3 + \lam u\|_2^2
\leq \lam \|u_t\|_2^2 = - \lam \dfrac \d {\d t} E(u(t)),
$$
which can be rewritten as
\begin{equation}\label{e6}
\|\nabla u_t\|_2^2 + 3 \int_\Omega u^2 u_t^2 \, \d x
+ \dfrac \d {\d t} \left[
\dfrac 1 2 \|\Delta u - u^3 + \lam u\|_2^2 + \lam E(u(t))
\right] \leq 0
\end{equation}
for a.e.~$t > 0$. In particular,
\begin{align}
 \int^T_0 \|\nabla u_t\|_2^2 \, \d t + 3 \int^T_0 \int_\Omega u^2 u_t^2 \, \d x \, \d t + \dfrac 1 2 \|\Delta u(T) - u^3(T) + \lam u(T)\|_2^2 
 \nonumber \\
+ \lam E(u(T)) \leq \dfrac 1 2 \|\Delta u_0 - u_0^3 + \lam u_0\|_2^2 + \lam E(u_0)
 \label{e6-int}
\end{align}
for all $T > 0$.  Hence $u \in L^\infty(0,T;H^2(\Omega) \cap L^6(\Omega))$ and $u_t \in L^2(0,T;H^1(\Omega))$ if $u_0 \in H^2(\Omega) \cap H^1_0(\Omega) \cap L^6(\Omega)$. 

On the other hand, multiply \eqref{e6} by $t$ and compute as follows:
\begin{align}
t\|\nabla u_t\|_2^2 + 3 t \int_\Omega u^2 u_t^2 \, \d x
+ \dfrac \d {\d t} \left( t  \left[
\dfrac 1 2 \|\Delta u - u^3 + \lam u\|_2^2 + \lam E(u(t))
 \right] \right) \nonumber\\
 \leq \dfrac 1 2 \|\Delta u - u^3 + \lam u\|_2^2 + \lam E(u(t)).\label{e6-0}
\end{align}
Integrating both sides over $(0,T)$, we conclude that
 \begin{align*}
  \lefteqn{
 \int^T_0 t\|\nabla u_t\|_2^2 \,\d t
  + 3 \int^T_0 t \left( \int_\Omega u^2 u_t^2 \, \d x \right) \, \d t
  }\nonumber\\
  &\quad + T \left[
\dfrac 1 2 \|\Delta u(T) - u^3(T) + \lam u(T)\|_2^2 + \lam E(u(T))
 \right]\nonumber\\
 &\leq \dfrac 1 2 \int^T_0 \|\Delta u - u^3 + \lam u\|_2^2 \, \d t
 + \lam \int^T_0 E(u(t))\, \d t
 \end{align*}
for all $T > 0$. Combining it with \eqref{e5-1}, one can obtain an estimate exhibiting a smoothing effect,
   \begin{align}
    \lefteqn{
 \int^T_0 t\|\nabla u_t\|_2^2 \,\d t
 + 3 \int^T_0 t \left( \int_\Omega u^2 u_t^2 \, \d x \right) \, \d t
    }\nonumber\\
    &\quad + T \left[
\dfrac 1 2 \|\Delta u(T) - u^3(T) + \lam u(T)\|_2^2 + \lam E(u(T))
 \right]\nonumber\\
 &\leq \dfrac 1 2 \left( T \|\eta_0\|_2^2 + E(u_0) - E(u(T)) \right)
 + \lam \int^T_0 E(u(t))\, \d t \label{e6-1}
   \end{align}
  for all $T > 0$.  Hence one expects that $t^{1/2} u_t \in L^2(0,T;H^1(\Omega))$, $t^{1/2}u \in L^\infty(0,T;H^2(\Omega))$ and $t^{1/6} u \in L^\infty(0,T;L^6(\Omega))$ for $u_0 \in \overline{D_r}^{H^1_0 \cap L^4}$. Moreover, multiply \eqref{e6-0} by $t$ again. Then
\begin{align*}
t^2 \|\nabla u_t\|_2^2 + 3 t^2 \int_\Omega u^2 u_t^2 \, \d x
+ \dfrac \d {\d t} \left( t^2  \left[
\dfrac 1 2 \|\Delta u - u^3 + \lam u\|_2^2 + \lam E(u(t))
 \right] \right)\\
 \leq 2t \left(
 \dfrac 1 2 \|\Delta u - u^3 + \lam u\|_2^2 + \lam E(u(t))
 \right).
\end{align*}
Integrate both sides over $(0,T)$. Then it follows that
 \begin{align}
  \lefteqn{
 \int^T_0 t^2 \|\nabla u_t\|_2^2 \, \d t + 3 \int^T_0 t^2 \left( \int_\Omega u^2 u_t^2 \, \d x \right) \, \d t
}\nonumber\\
  &\quad + T^2  \left[
  \dfrac 1 2 \|\Delta u(T) - u^3(T) + \lam u(T)\|_2^2 + \lam E(u(T)) \right]
  \nonumber\\
  &\leq 2 \int^T_0 t \left(
 \dfrac 1 2 \|\Delta u - u^3 + \lam u\|_2^2 + \lam E(u(t))
 \right) \, \d t\nonumber\\
  &\stackrel{\eqref{e5-2}}\leq \int^T_0 E(u(t)) \, \d t + 2 \lam \int^T_0 t E(u(t)) \, \d t + \dfrac{T^2}2 \|\eta_0\|_2^2
  + \dfrac{\lam T}2 \|u(T)\|_2^2.
  \label{e6-3}
 \end{align}
By virtue of \eqref{e4-0}, we may obtain $t u_t \in L^2(0,T;H^1(\Omega))$, $t u \in L^\infty(0,T;H^2(\Omega))$ and $t^{1/3} u \in L^\infty(0,T;L^6(\Omega))$ for $u_0 \in \overline{D_r}^{L^2}$. 

  Let us also derive another partial energy-dissipation estimate. Inequality \eqref{e6-1} yields
 \begin{align}
  \lefteqn{
\dfrac 1 2 \|\Delta u(T) - u^3(T) + \lam u(T)\|_2^2 + \lam E(u(T))
}\nonumber\\
& \leq \dfrac 1 2 \left( \|\eta_0\|_2^2 + \dfrac 1 T E(u_0) - \dfrac 1 T E(u(T)) \right)
 + \dfrac \lam T \int^T_0 E(u(t))\, \d t
\label{A}
 \end{align}
for any $T > 0$.
  Due to the decrease of the energy $t \mapsto E(u(t))$ and the fact that $E(\cdot) \geq - M_0 := \inf_{w \in H^1_0(\Omega)} E(w) \leq 0$, it follows that
\begin{align}
 \lefteqn{
 \dfrac 1 2 \|\Delta u(t) - u^3(t) + \lam u(t)\|_2^2
 }\nonumber\\
 & \leq \lam M_0 + \dfrac 1 2 \left( \|\eta_0\|_2^2 + \dfrac 1 t E(u_0) + \dfrac 1 t M_0 \right)+ \dfrac \lam t \int^t_0 \phi(u(\tau)) \, \d \tau\nonumber\\
 &\stackrel{\eqref{diss}}\leq
 \lam M_0 + \dfrac 1 2 \left( r + \dfrac 1 t E(u_0) + \dfrac 1 t M_0 \right)
 + \dfrac{C_r}2 + \dfrac{\phi(0)}{2t}
 \quad \mbox{ for all } \ t > 0, \label{diss2}
\end{align}
which will be used to construct an absorbing set in \S \ref{S:ps}. On the other hand, one can also exhibit a dissipation structure in a more quantitative way.  Since $t \mapsto E(u(t))$ is non-increasing and $E(\cdot)$ is bounded from below, we deduce that
$$
E_\infty := \lim_{t \to \infty} E(u(t)) \geq - M_0,
$$
and therefore,
$$
\dfrac 1 T \int^T_0 E(u(t))\, \d t \searrow E_\infty \quad \mbox{ as } \ T \to \infty.
$$
Consequently, by \eqref{A}, one has
 \begin{corollary}\label{C:en-diss}
  In case $u_0 \in H^2(\Omega) \cap H^1_0(\Omega) \cap L^6(\Omega)$, for any $\vep > 0$, one can take $T_\vep > 0$ {\rm (}possibly depending on each solution $u${\rm )} such that, for all $T \geq T_\vep$,
\begin{equation}\label{diss3}
 \|\Delta u(T) - u^3(T) + \lam u(T)\|_2^2 \leq \left\|(\Delta u_0 - u_0^3 + \lam u_0)_-\right\|_2^2  + \vep.
\end{equation}
  In case $u_0 \in \overline{D_r}^{H^1_0\cap L^4}$, for any $\vep > 0$, one can take $T_\vep > 0$ such that, for all $T \geq T_\vep$,
\begin{equation*}
 \|\Delta u(T) - u^3(T) + \lam u(T)\|_2^2 \leq r  + \vep.
\end{equation*}
\end{corollary}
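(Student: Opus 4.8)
The plan is to pass to the limit $T \to \infty$ in the already-derived inequality \eqref{A}, exploiting the asymptotic stabilization of the energy along the flow. First, I would recall from \eqref{e1} that $t \mapsto E(u(t))$ is non-increasing; since $E(\cdot) \geq -M_0$ is bounded from below, the limit $E_\infty := \lim_{t\to\infty} E(u(t))$ exists in $[-M_0,\infty)$, and by a Cesàro-type argument $\tfrac1T\int_0^T E(u(t))\,\d t \searrow E_\infty$ as $T\to\infty$. Isolating $\tfrac12\|\Delta u(T) - u^3(T) + \lam u(T)\|_2^2$ on the left of \eqref{A} by moving $\lam E(u(T))$ to the right, I would rewrite the resulting right-hand side as
\begin{equation*}
\frac12\|\eta_0\|_2^2 + \frac{1}{2T}\big( E(u_0) - E(u(T)) \big) + \left( \frac\lam T \int_0^T E(u(t))\,\d t - \lam E(u(T)) \right).
\end{equation*}

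The key observation --- and the heart of the \emph{partial} dissipation --- is that the last bracket tends to $\lam E_\infty - \lam E_\infty = 0$, because both $\tfrac1T\int_0^T E(u(t))\,\d t$ and $E(u(T))$ converge to the same value $E_\infty$. Since $E(u(T))$ stays bounded in $[-M_0, E(u_0)]$, the middle term is $O(1/T)$ and also vanishes. Taking $\limsup_{T\to\infty}$ therefore yields
\begin{equation*}
\limsup_{T\to\infty} \tfrac12 \|\Delta u(T) - u^3(T) + \lam u(T)\|_2^2 \leq \tfrac12 \|\eta_0\|_2^2 = \tfrac12 \|(\Delta u_0 - u_0^3 + \lam u_0)_-\|_2^2,
\end{equation*}
recalling $\eta_0 = -(\Delta u_0 - u_0^3 + \lam u_0)_-$. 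Unwinding the definition of $\limsup$ gives, for each $\vep > 0$, a threshold $T_\vep$ (depending on $u$ through $E_\infty$ and the rate of convergence) beyond which \eqref{diss3} holds; this proves the first assertion.

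For the second assertion I would reduce to the first one by a restart argument based on the smoothing effect. Fix any $t_0 > 0$. Since $u_0 \in \overline{D_r}^{H^1_0 \cap L^4} \subset \overline{D_r}^{L^2}$, part (i) of Theorem \ref{T:ex} guarantees $u(t_0) \in D_r$, i.e.\ $\|(\Delta u(t_0) - u(t_0)^3 + \lam u(t_0))_-\|_2^2 \leq r$. By the autonomy of {\rm (P)}, the shifted function $\tilde u(t) := u(t + t_0)$ is a solution of {\rm (P)} with the regular initial datum $u(t_0) \in H^2(\Omega)\cap H^1_0(\Omega)\cap L^6(\Omega)$, whose associated datum $\tilde\eta_0$ equals $\eta(t_0)$ with $\|\tilde\eta_0\|_2^2 \leq r$. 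Applying the first assertion to $\tilde u$ and noting that the $\limsup$ as $T\to\infty$ is unaffected by the time shift, we obtain the claimed bound with $r$ in place of $\|\eta_0\|_2^2$.

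The main obstacle is conceptual rather than computational: one must resist the temptation to bound $E(u(T))$ from below term-by-term, and instead recognize that the two $E$-dependent contributions cancel in the limit because both converge to $E_\infty$. This cancellation is exactly what permits the obstacle datum $\eta_0$ (equivalently, the measure of irreversibility built into $u_0$) to control the asymptotic size of $\Delta u - u^3 + \lam u$, even though no such bound could survive at finite time.
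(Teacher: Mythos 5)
Your proof of the first assertion is correct and is essentially the paper's own argument: the paper, too, passes to the limit in \eqref{A} after observing that $E(u(T))$ and $\frac1T\int_0^T E(u(t))\,\d t$ converge to the common limit $E_\infty$, so that the two energy contributions cancel and only $\frac12\|\eta_0\|_2^2$ survives in the $\limsup$.

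For the second assertion you take a genuinely different route, and it contains a gap. The paper never restarts the flow: for $u_0 \in \overline{D_r}^{H^1_0\cap L^4}$ the solution is constructed by approximating $u_0$ by data $u_{0,n}\in D_r$, and the uniform bound $\|\eta_n(t)\|_2^2 \le \|(\Delta u_{0,n}-u_{0,n}^3+\lam u_{0,n})_-\|_2^2\le r$ survives the passage to the limit, so that \eqref{e6-1}, and hence \eqref{A}, hold with $\|\eta_0\|_2^2$ replaced by $r$ (part (ii) of Theorem \ref{A:T:ex}); the limiting argument of the first assertion then applies verbatim. Your restart argument instead applies the first assertion to the shifted function $\tilde u(t):=u(t+t_0)$. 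The problem is that the first assertion has not been proved for \emph{every} $L^2$-solution with datum in $H^2(\Omega)\cap H^1_0(\Omega)\cap L^6(\Omega)$: its proof rests on \eqref{A}, and in this paper the energy inequalities are established only for solutions obtained through the approximation \eqref{P-approx} (or, via uniqueness, for the solution selected by the obstacle reformulation). Since uniqueness of $L^2$-solutions is not known in general (Theorem \ref{T:ex} gives it only under dimensional restrictions or for bounded solutions), the fact that $\tilde u$ is \emph{a} solution emanating from $u(t_0)$ does not allow you to identify it with a solution known to satisfy \eqref{A} with initial datum $u(t_0)$. This is precisely the delicate point the paper confronts in part (iv) of Theorem \ref{A:T:ex}, where restart-type inequalities (energy inequalities between two arbitrary times) are justified only by invoking the uniqueness of solutions that also solve the obstacle problem (Theorem \ref{T:reform}). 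The same caveat applies to your use of the smoothing property $u(t_0)\in D_r$, which Theorem \ref{T:ex} asserts for the constructed solution.

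The gap is fixable: either follow the paper's route, which needs no restart, or restrict attention to the selected solution of Theorem \ref{T:reform}. In the latter case one can check that if $u$ solves the obstacle problem with obstacle $u_0$, then $\tilde u$ solves the obstacle problem with obstacle $u(t_0)$ (indeed, wherever $u(t)>u(t_0)\ge u_0$ one has $\eta=0$), so by uniqueness of obstacle-problem solutions (Theorem \ref{T:comp}) $\tilde u$ coincides with the solution constructed from the datum $u(t_0)\in D_r$, which does satisfy \eqref{A}; only after this identification does your application of the first assertion become legitimate.
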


\begin{remark}\label{R:diss}
 {\rm
 Due to the non-decreasing constraint on solutions, energy-dissipation cannot be observed in a usual way. Indeed, since $u(\cdot,t) \geq u_0$ a.e.~in $\Omega$ for all $t \geq 0$, it follows that
 $$
\|u(t)\|_p \geq \|u_0\|_p \quad \mbox{ for any } \ p \in [1,\infty],
 $$
 provided that $u_0 \geq 0$. Hence the $L^p$ norm of $u(t)$ never decays and no absorbing set in $L^p(\Omega)$ exists. Moreover, let $z$ be the positive solution of the classical elliptic Allen-Cahn equation \eqref{cAC-e}. Then any multiple $w = cz$ for $c \geq 1$ turns out to be an equilibrium for (P), since it holds that $\Delta w - w^3 + \lam w \leq 0$ a.e.~in $\Omega$. Therefore the set of equilibria is unbounded in any (linear) space including $z$. On the other hand, for any initial data $u_0 \in D_r$, one can observe \emph{partial energy-dissipation} in \eqref{diss}, \eqref{diss2} and \eqref{diss3}.  Indeed, the set $D_r$ excludes a part of the unbounded set of equilibria (still, we emphasize again that $D_r$ itself is unbounded).  Hence, there arises a question: whether or not one can construct an ``attractor'' for the DS generated by (P) over the set $D_r$. An answer to this question will be provided in \S \ref{S:ps} and \S \ref{S:At}.}
\end{remark}

We close this section by giving a sketch of proof for the existence part of Theorem \ref{T:ex} and by exhibiting an idea to justify the formal arguments given so far.
 \begin{proof}[A sketch of proof for the existence part of Theorem \ref{T:ex}]
Set $H = L^2(\Omega)$ and define a functional $\psi : H \to [0,\infty]$ by
 \begin{equation}\label{varphi}
 \psi(u) := \begin{cases}
		\phi(u) &\mbox{ if } \ u \in H^1_0(\Omega) \cap L^4(\Omega),\\
		\infty &\mbox{ otherwise.}
	       \end{cases}
 \end{equation}
  Then the subdifferential $\partial \psi$ of $\psi$ has the representation, $\partial \psi(v) = - \Delta v + v^3$ for $v \in D(\partial \psi) = H^2(\Omega) \cap H^1_0(\Omega) \cap L^6(\Omega)$. Hence (P) is reduced to an abstract Cauchy problem in the Hilbert space $H$,
\begin{equation}\label{ee}
 u_t + \partial \iI(u_t) + \partial \psi(u) \ni \lam u \ \mbox{ in } H, \quad 0 < t < T, \quad u(0) = u_0,
\end{equation}
  whose solvability (i.e., existence of solutions) has been studied by~\cite{Barbu75} and~\cite{Arai} for $u_0 \in D(\partial \psi)$. Then (iii) can be proved by checking some structure conditions proposed in~\cite{Arai} (see Appendix \S \ref{S:Aex} for more details). For later use, let us briefly recall a strategy (similar to~\cite{Arai}) to construct a solution of \eqref{ee}: We construct approximate solutions for (P) and denote by $u_\lambda$ the (unique) solution of
\begin{equation}\label{P-approx}
 u_t + \partial \iI(u_t) + \partial \psi_\lambda(u) \ni \lam u \ \mbox{ in } H, \quad 0 < t < T, \quad u(0) = u_0 \in D(\partial \psi),
\end{equation}
 where $\partial \psi_\lambda$ is the subdifferential operator of the \emph{Moreau-Yosida regularization} $\psi_\lambda$ of $\psi$ (equivalently, the \emph{Yosida approximation} of $\partial \psi$) (see, e.g.,~\cite{HB1}). Here, one can write
  \begin{equation}\label{Jlam}
  \partial \psi_\lambda(v) = \partial \psi(J_\lambda v) = - \Delta (J_\lambda v) + (J_\lambda v)^3 \quad \mbox{ for all } \ v \in H, 
  \end{equation}
  where $J_\lambda : H \to D(\partial \psi) = H^2(\Omega) \cap H^1_0(\Omega) \cap L^6(\Omega)$ stands for the \emph{resolvent} of $\partial \psi$, i.e., $J_\lambda := (I + \lambda \partial \psi)^{-1}$ (see~\cite{HB1}). Indeed, Equation \eqref{P-approx} can be also rewritten as an evolution equation with a Lipschitz continuous operator in $H$, that is,
 $$
 u_t = \left(I + \partial \iI\right)^{-1} \left( - \partial \psi_\lambda(u) + \lam u \right) \ \mbox{ in } H, \quad 0 < t < T, \quad u(0) = u_0,
 $$
 since $\partial \psi_\lambda$ and $\left(I + \partial \iI\right)^{-1}$ are Lipschitz continuous in $H$. Therefore the solution $u_\lambda$ of \eqref{P-approx} is uniquely determined (by $u_0$) and $u_\lambda$ is of class  $C^{1,1}$  in time. Furthermore, the section $\eta_\lambda$ of $\partial \iI(\partial_t u_\lambda)$ as in \eqref{EQ} belongs to $ C^{0,1} ([0,T];H)$ by means of the relation $\eta_\lambda = \lam u_\lambda - \partial \psi_\lambda(u_\lambda) - \partial_t u_\lambda$. Moreover, \eqref{P-approx} is also equivalent to
 $$
 u_t = \left( - \partial \psi_\lambda(u) + \lam u \right)_+ \ \mbox{ in } H, \quad 0 < t < T.
 $$
  As in the formal computations given above, one can derive corresponding energy inequalities for $u_\lambda$ with $\eta_0$ replaced by $ -( \lam u_0 - \partial \psi_\lambda(u_0))_- $. Therefore passing to the limit as $\lambda \to 0_+$ (see~\cite{Arai}), one can construct an $L^2$-solution $u$ of (P) (for $u_0 \in D(\partial \psi)$) and reproduce all the energy inequalities obtained so far. Moreover, in order to prove smoothing effects (e.g., (ii)), we approximate initial data $u_0 \in \overline{D_r}^{H^1_0 \cap L^4}$ by $u_{0,n} \in D_r$ satisfying
 \begin{equation*}
  u_{0,n} \to u_0 \quad \mbox{ strongly in } H^1_0(\Omega) \cap L^4(\Omega) \quad \mbox{ as } \ n \to \infty
 \end{equation*}
  and particularly employ the fact $\| (\Delta u_{0,n} - u_{0,n}^3 + \lam u_{0,n})_-\|_2^2 \leq r$ (by $u_{0,n} \in D_r$) to reproduce the energy inequalities. For more precise arguments as well as a proof for (i), we refer the reader to Appendix \S \ref{S:Aex}.
\end{proof}

\section{Reformulation of (P) as an obstacle problem}\label{S:ref}

In this section we shall verify that \eqref{pde} is equivalently rewritten as a parabolic variational inequality of obstacle type. Such a reformulation will not only shed light on a characteristic behavior of solutions but also play an important role to reveal the long-time behavior of each solution (see \S \ref{S:conv}). Moreover, it will be also employed to discuss the uniqueness of solutions and a comparison principle for (P) (see \S \ref{S:cp}) as well as to investigate Lyapunov stability of equilibria in a forthcoming paper (see~\cite{AE2}).

Our result of this section is stated in the following:
 \begin{theorem}[Reformulation of (P) as an obstacle problem]\label{T:reform}
  For $u_0 \in \overline{D_r}^{L^2}$, the Cauchy-Dirichlet problem {\rm (P)} admits a solution $u = u(x,t)$ which also solves
\begin{alignat}{4}
 u_t + \partial \iII (u) - \Delta u + u^3 - \lam u &\ni 0 \quad &\mbox{ in }& \ \Omega \times (0,\infty),\label{pde2}\\
 u&= 0 \quad &\mbox{ on }& \ \partial \Omega \times (0,\infty),\label{bc2}\\
 u&= u_0 \quad &\mbox{ in }& \ \Omega,\label{ic2}
\end{alignat}
 where $\partial \iII$ is the subdifferential operator of the indicator function $\iII$ over $[u_0(x),\infty)$. Hence the section $\eta$ of $\partial \iI(u_t)$ as in \eqref{EQ} also belongs to $\partial \iII(u)$ for a.e.~in $\Omega \times (0,\infty)$.  Such a solution to {\rm (P)} is uniquely determined by the initial datum $u_0$.  Furthermore, {\rm (P)} is equivalently rewritten as {\rm \eqref{pde2}--\eqref{ic2}},  provided that the solution of {\rm (P)} is unique.
 \end{theorem}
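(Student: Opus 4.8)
The plan is to realize the common solution of (P) and of the obstacle problem \eqref{pde2}--\eqref{ic2} as the solution of a single subdifferential flow, and to pass between the two formulations using only the time-monotonicity $u_t\ge 0$. The key observation is that, for the decomposition at hand, membership in $\partial\iI(u_t)$ and in $\partial\iII(u)$ are both equivalent to the same two sign conditions, $u_t\ge 0$ and $\eta\le 0$, together with a single orthogonality relation: $\eta\,u_t=0$ in the first case and $\eta\,(u-u_0)=0$ in the second. Since $\iII$ is the indicator of the time-independent closed convex set $K_{u_0}:=\{w\in L^2(\Omega):w\ge u_0\}$, equation \eqref{pde2} is the flow $u_t+\partial\iII(u)+\partial\psi(u)\ni\lam u$ governed by the proper lower-semicontinuous convex functional $\iII+\psi$ (with $\psi$ as in \eqref{varphi}, so $\partial\psi(v)=-\Delta v+v^3$), and hence it admits a unique solution $u$ for $u_0\in\overline{D_r}^{L^2}$ by the theory of evolution equations governed by subdifferentials (see, e.g.,~\cite{HB1}), the term $\lam u$ being a Lipschitz perturbation. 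The smoothing of that theory, combined with re-derivations of the energy inequalities of \S\ref{S:e} in which the terms $(\eta,u)$ and $(\eta,-\Delta u+u^3-\lam u)$ are now controlled through $\eta\le 0$ and $\eta\,(u-u_0)=0$, will supply the full regularity required in Definition \ref{D:sol}.

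The heart of the matter is a monotonicity property, which is exactly where the coincidence of the obstacle with the initial datum is used. First I would prove a comparison principle for \eqref{pde2}: if $u^{(1)},u^{(2)}$ solve the flow with the same obstacle $u_0$ and initial data $u_0^{(1)}\le u_0^{(2)}$, then $u^{(1)}\le u^{(2)}$. This follows by testing the difference of the two equations by $(u^{(1)}-u^{(2)})_+$ and invoking the monotonicity of $-\Delta(\cdot)+(\cdot)^3$ and the pointwise monotonicity of $\partial\iII$ (so that the $\eta$-difference term is nonnegative), with the factor $\lam$ absorbed by Gronwall's inequality. Applying this to $u(\cdot,\cdot)$ and its time-shift $u(\cdot,\cdot+h)$, which solves the same problem with initial datum $u(\cdot,h)\ge u_0$, yields $u(\cdot,t+h)\ge u(\cdot,t)$, i.e.\ $u_t\ge 0$. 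In particular $u(x,t)\ge u_0(x)$, and by monotonicity the coincidence set $\{u=u_0\}$ is downward-closed in time: if $u(x,t)=u_0(x)$ then $u(x,s)=u_0(x)$ for all $s\in[0,t]$, so $u_t(x,t)=0$ for a.e.\ such $(x,t)$.

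With monotonicity in hand the passage from the obstacle problem to (P) is immediate. Writing \eqref{pde2} as $u_t+\eta-\Delta u+u^3-\lam u=0$ with $\eta\in\partial\iII(u)$, one has $\eta\le 0$ and $\eta$ supported in $\{u=u_0\}$; hence $\eta=0$ on $\{u>u_0\}$ while $u_t=0$ a.e.\ on $\{u=u_0\}$, so $\eta\,u_t=0$, which together with $u_t\ge 0$ and $\eta\le 0$ is precisely $\eta\in\partial\iI(u_t)$. The elementary decomposition (if $g=u_t+\eta$ with $u_t\ge0$, $\eta\le0$, $u_t\eta=0$, then $u_t=(g)_+$ and $\eta=-(g)_-$) then recovers $\eta=-(\Delta u-u^3+\lam u)_-$ and shows that $u$ solves \eqref{iAC}, hence is a solution of (P) in the sense of Definition \ref{D:sol}. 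This establishes that (P) admits a solution which also solves \eqref{pde2}--\eqref{ic2}, and that the same $\eta$ belongs simultaneously to $\partial\iI(u_t)$ and $\partial\iII(u)$.

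Finally, uniqueness of the obstacle solution (the comparison argument with $u_0^{(1)}=u_0^{(2)}=u_0$ forces $u^{(1)}=u^{(2)}$) yields the remaining assertions: any solution of (P) that also solves \eqref{pde2}--\eqref{ic2} must equal this unique $u$, which is the statement that such a solution of (P) is uniquely determined; and if, under the hypotheses of Theorem \ref{T:ex}(iv)--(vi), (P) has a unique solution $u^*$, then $u^*$ coincides with the constructed $u$ and therefore solves the obstacle problem, while conversely the unique obstacle solution solves (P), so the two problems have the same (singleton) solution set and are equivalent. The main obstacle in making this rigorous is twofold: realizing the initial datum $u_0\in\overline{D_r}^{L^2}$ as an admissible starting point for the constrained flow (i.e.\ $u_0\in\overline{D(\iII+\psi)}^{L^2}$, which requires approximating $u_0$ from within $K_{u_0}$ and is the analytic content behind the initial-layer and smoothing phenomena illustrated in Remark \ref{R:se}), and carrying out the support argument ``$\eta$ lives where $u_t=0$'' at the level of a.e.\ pointwise values rather than merely for smooth approximations — which is precisely why the detailed justification is deferred to Appendix \S\ref{A:reform}.
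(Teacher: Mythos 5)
Your proposal runs the paper's argument in the reverse direction, and the reversal is genuine: the paper constructs a solution of (P) by the doubly nonlinear approximation \eqref{P-approx:2} (Arai--Barbu) and proves that its multiplier $\eta\in\partial\iI(u_t)$ is monotone in time (Lemma \ref{L:eta-dec}, obtained by testing the differentiated equation by $\zeta|\eta|^{r-2}\eta$), so that $\eta$ vanishes wherever $u>u_0$ and hence lies in $\partial\iII(u)$; you instead solve the obstacle flow $u_t+\partial(\iII+\psi)(u)\ni\lam u$ by convex gradient-flow theory, prove $u_t\ge 0$ by comparison with time shifts, and transfer back to (P) by observing that the multiplier is supported on the coincidence set, where monotonicity forces $u_t=0$ a.e. Each direction gets one of the two sign/orthogonality conditions for free and must prove the other as a monotonicity lemma; yours concerns $u$ itself (continuous in $t$ with values in $L^2(\Omega)$ and genuinely non-decreasing), which is easier to handle pointwise than the paper's monotonicity of $\eta$ (merely $L^\infty$ in time), and your pointwise transfer ($u_t\ge 0$, $\eta\le 0$, $\eta u_t=0$ imply $\eta\in\partial\iI(u_t)$, $u_t=g_+$, $\eta=-g_-$) is correct, as are the uniqueness and conditional-equivalence conclusions, which follow from uniqueness for the obstacle flow exactly as in Theorem \ref{T:comp}.

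However, two steps you delegate to ``standard theory'' are genuine gaps, and they are precisely where the structure of $D_r$ does the work. First, admissibility of the datum: Brezis' theory requires $u_0\in\overline{D(\iII+\psi)}^{L^2}$, i.e.\ that $u_0$ be an $L^2$-limit of functions of $H^1_0(\Omega)\cap L^4(\Omega)$ lying \emph{above} $u_0$. This is trivial when $u_0\in D_r$, but for $u_0\in\overline{D_r}^{L^2}$ the approximants $u_{0,n}\in D_r$ need not dominate $u_0$, and $\max(u_{0,n},u_0)$ need not belong to $H^1_0(\Omega)\cap L^4(\Omega)$; in the paper this admissibility is available only \emph{a posteriori}, as a consequence of the theorem itself. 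Closing it along your route essentially forces you to solve the obstacle problems with obstacles $u_{0,n}$ and pass to the limit via Mosco convergence of $\iIIn$ to $\iII$ (the paper's Lemma \ref{L:Mosco}), i.e.\ the same approximation machinery as Appendix \ref{A:reform}. Second, and more seriously, the strong interpretation of the abstract flow: Brezis' theorem produces a section of $\partial(\iII+\psi)(u)$, whereas \eqref{pde2} and Definition \ref{D:sol} demand that this section split as $\eta+(-\Delta u+u^3)$ with $u(t)\in H^2(\Omega)\cap L^6(\Omega)$, $\eta\in L^\infty(0,\infty;L^2(\Omega))$, and $\eta\in\partial\iII(u)$ pointwise. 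The sum rule $\partial(\iII+\psi)=\partial\iII+\partial\psi$ is not automatic for obstacle problems; it is a regularity statement of Lewy--Stampacchia type, valid exactly because of the one-sided condition $(\Delta u_0-u_0^3+\lam u_0)_-\in L^2(\Omega)$ defining $D_r$, and it is also what yields the uniform bound $\|\eta(t)\|_2^2\le r$ (the analogue of \eqref{eta-ini}, i.e.\ of the paper's Energy Inequality 3). Your plan to recover the needed regularity by ``re-deriving the energy inequalities,'' with $(\eta,u)$ and $(\eta,-\Delta u+u^3-\lam u)$ controlled through $\eta\le0$ and $\eta(u-u_0)=0$, presupposes exactly this decomposition, so as written the argument is circular at this point; the honest route is a penalization $\eta_\epsilon=-\epsilon^{-1}(u_\epsilon-u_0)_-$ together with a parabolic Lewy--Stampacchia estimate giving the uniform $L^2$ bound on $\eta_\epsilon$, which is work of the same order as the paper's Appendix \ref{S:Aex}.

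A smaller repair: your comparison principle tests the difference of the two \emph{pointwise} equations, so it sits downstream of the second gap. You can decouple it by proving order-preservation of the semigroup abstractly: the functional $\iII+\psi$ is submodular, $\Phi(u\wedge v)+\Phi(u\vee v)\le\Phi(u)+\Phi(v)$, so the resolvents $(I+\epsilon\,\partial(\iII+\psi))^{-1}$, and hence the (Lipschitz-perturbed) semigroup, are order-preserving; then $u_t\ge 0$ follows from the time-shift argument without ever invoking the pointwise multiplier.
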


\begin{remark}
 {\rm
 \begin{enumerate}
  \item In this paper, it is not proved that \emph{all} solutions to (P) solve \eqref{pde2}--\eqref{ic2}, unless solutions to (P) are uniquely determined by initial data. Since Theorem \ref{T:reform} will be proved through the approximation \eqref{P-approx} of \eqref{pde} in Appendix \S \ref{A:reform}, the equivalence of two problems will be ensured only for the solutions constructed by the approximation as in \S \ref{S:e} (and also in Appendix \S \ref{S:Aex}). We also refer the reader to the following formal arguments and Remark \ref{R:inaccuracy}.
  \item The theorem stated above also provides a \emph{selection principle} for (P). Indeed, for $u_0 \in \overline{D_r}^{L^2}$, the uniqueness of solutions is not generally ensured. However, according to Theorem \ref{T:reform}, (P) always possesses one and only one solution which also solves \eqref{pde2}--\eqref{ic2}. Moreover, as will be discussed in Appendix \S \ref{S:Aex}, selected solutions fulfill energy inequalities derived in \S \ref{S:e}. Such a selection principle will be used to consider the dynamical system generated by (P) and to prove the convergence of solutions as $t \to +\infty$.
  \item It is noteworthy that the \emph{fully nonlinear} problem \eqref{iAC} is now converted to a \emph{semilinear} obstacle problem \eqref{pde2}. However, such a semilinear problem would have another difficulty, since the obstacle function $u_0$ is supposed to lie on the $L^2$ closure of $D_r$ and the problem is posed on the $L^2$ (i.e., strong) framework. On the other hand, it is also known  (see~\cite{Ef}) that uniformly elliptic fully nonlinear equations of the form $f(D^2u) = 0$ can be reduced to a quasilinear one, provided that $f$ is smooth enough (e.g., of class $C^{3,\alpha}$). However, it is not applicable to \eqref{iAC}, for the corresponding $f$ is not so smooth and not uniformly elliptic. 
 \end{enumerate}
 }
\end{remark}

 \begin{remark}[Parabolic obstacle problem]\label{R:obstacle}
  {\rm
  Problem \eqref{pde2}--\eqref{ic2} can be equivalently rewritten as follows:
  \begin{align*}
   u \geq u_0, \quad u_t - \Delta u + u^3 - \lam u \geq 0 \quad \mbox{ in } \ \Omega \times (0,\infty),\\
   \left( u - u_0 \right) \left(u_t - \Delta u + u^3 - \lam u \right) = 0 \quad \mbox{ in } \ \Omega \times (0,\infty),\\
   u|_{\partial \Omega} = 0, \quad u|_{t = 0} = u_0,
  \end{align*}
  which is an obstacle problem of parabolic type and where the initial datum $u_0$ also plays a role of the obstacle function from below (see~\cite{LauSal09,CafFig13}). One may no longer expect classical regularity of solutions to (P). Indeed, let us consider a simpler elliptic obstacle problem, e.g.,
 $$
 - \Delta \phi (x) \geq f(x), \quad \phi(x) \geq g(x), \quad \left( -\Delta \phi(x) - f(x) \right) \left( \phi(x) - g(x) \right) = 0 \ \mbox{ in } \Omega
 $$
 along with the homogeneous Dirichlet condition. It is well known that the optimal regularity of solution is $C^{1,1}(\Omega)$ (unless the contact set is non-empty), even though the obstacle function $g$ is sufficiently smooth (e.g., $g \in C^\infty(\overline\Omega)$) (see~\cite{Cafferelli}).
  }
 \end{remark}

 In the rest of this section, we shall give only a formal argument to explain an idea of proof in an intuitive way. A rigorous proof for the theorem above will be provided in \S \ref{A:reform} of Appendix (see also Remark \ref{R:inaccuracy}). 
 Let us start with the following observation:
\begin{lemma}\label{L:eta-dec}
Let $u$ be a solution of {\rm (P)} {\rm (}which is constructed as in the proof of Theorem \ref{T:ex}{\rm )} and let $\eta$ be the section of $\partial \iI(u_t)$ as in \eqref{EQ}. Then $\eta(x,t)$ is non-decreasing in $t$ for a.e.~$x \in \Omega$. 
\end{lemma}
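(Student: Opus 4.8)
The plan is to establish the stronger pointwise statement $\partial_t \eta \ge 0$ a.e.\ in $\Omega \times (0,\infty)$, from which the non-decrease of $t \mapsto \eta(x,t)$ is immediate. The starting point is the complementarity structure carried by $\eta \in \partial \iI(u_t)$: writing $v := u_t$, one has $v \ge 0$, $\eta \le 0$ and $\eta v = 0$ a.e. Differentiating \eqref{pde} formally in time (exactly as in the derivation of \eqref{pde:v}) gives
\begin{equation*}
v_t + \eta_t - \Delta v + 3 u^2 v = \lam v \quad \mbox{ in } \ \Omega \times (0,\infty),
\end{equation*}
and the idea is to test this identity by $-(\eta_t)_-$, setting $\beta := (\eta_t)_- \ge 0$ and showing $\|\beta\|_2^2 = 0$.

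First I would localize the support of $\beta$. Since $\eta \le 0$, at any time where $\eta(x,\cdot) = 0$ the function attains its maximum, so $\eta_t = 0$ there; hence $\{\eta_t < 0\} \subseteq \{\eta < 0\}$, and by complementarity $\{\eta < 0\} \subseteq \{v = 0\}$. Consequently $v = 0$ (and therefore $v_t = 0$) a.e.\ on $\{\beta > 0\}$. This annihilates every term carrying a factor $v$: the contributions of $v_t$, $3u^2 v$ and $\lam v$ tested against $-\beta$ all vanish, while the $\eta_t$-term produces $(\eta_t, -\beta) = \|\beta\|_2^2$.

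The only remaining term is the diffusion term, $(-\Delta v, -\beta) = -(\nabla v, \nabla \beta)$ after integrating by parts (the boundary contribution disappearing under the homogeneous Dirichlet condition, since $v = u_t = 0$ on $\partial \Omega$), so the identity reduces to $\|\beta\|_2^2 = (\nabla v, \nabla \beta)$. Here I would invoke the Stampacchia-type fact that an $H^1$-function has vanishing gradient a.e.\ on each of its level sets: $\nabla v = 0$ a.e.\ on $\{v = 0\}$ and $\nabla \beta = 0$ a.e.\ on $\{\beta = 0\}$. Splitting $\Omega$ into $\{\beta > 0\} \subseteq \{v = 0\}$ and $\{\beta = 0\}$, the integrand $\nabla v \cdot \nabla \beta$ vanishes a.e.\ on each piece, whence $(\nabla v, \nabla \beta) = 0$. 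Collecting terms leaves $\|\beta\|_2^2 = 0$, i.e.\ $(\eta_t)_- = 0$ and thus $\eta_t \ge 0$.

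The main obstacle is that this computation is only formal: Definition \ref{D:sol} does not provide the time-differentiability needed to form \eqref{pde:v} or to test by $(\eta_t)_-$. I would make it rigorous on the approximate problem \eqref{P-approx}, whose solution $u_\lambda$ is of class $C^{1,1}$ in time and whose section $\eta_\lambda$ lies in $C^{0,1}([0,T];H)$, so that $\partial_t \eta_\lambda$ exists as an $L^2$-valued function and the same test is admissible. The delicate point is that on the approximate level the term $-\Delta v$ is replaced by $\frac{d}{dt}\partial \psi_\lambda(u_\lambda)$; one must check that its pairing with $-(\partial_t \eta_\lambda)_-$ is nonpositive, exploiting the monotonicity of $\partial \psi_\lambda$ together with the fact that $v_\lambda = \partial_t u_\lambda$ vanishes on the support of $(\partial_t \eta_\lambda)_-$. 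Once $\partial_t \eta_\lambda \ge 0$ is secured, the non-decrease of $\eta$ in time passes to the limit $\lambda \to 0_+$ through the relation $\eta_\lambda(t) - \eta_\lambda(s) = \int_s^t \partial_\tau \eta_\lambda \, \d \tau \ge 0$, which is stable under the convergence $\eta_\lambda \to \eta$ used to construct the solution. In keeping with the formal level of the present section, I would defer this technical part to Appendix \S \ref{A:reform}.
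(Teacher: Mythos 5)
Your strategy --- establishing the stronger pointwise statement $\eta_t \ge 0$ by testing the differentiated equation \eqref{pde:v} against $-(\eta_t)_-$ --- is genuinely different from the paper's route, and its skeleton is sound: the localization $\{\eta_t < 0\} \subseteq \{\eta < 0\} \subseteq \{u_t = 0\}$ (complementarity, plus the fact that the derivative of $t \mapsto \eta(x,t)$ vanishes a.e.\ on the level set $\{\eta(x,\cdot) = 0\}$) is correct and is exactly what makes this test function useful. For comparison, the paper tests \eqref{pde:v} by $\zeta\eta$ and $\zeta|\eta|^{r-2}\eta$ with nonnegative spatial cutoffs $\zeta$, deduces that the localized norms $\int_\Omega \zeta |\eta(t)|^r \, \d x$ are non-increasing, and converts this into pointwise a.e.\ monotonicity of $|\eta(x,t)|$ by a contradiction argument with characteristic functions; its rigorous counterpart in Appendix \ref{A:reform} arrives at precisely the conclusion you aim for, namely $\partial_t \eta_\lambda \ge 0$ a.e.

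There is, however, a genuine gap in your handling of the diffusion term. You integrate by parts and apply Stampacchia's lemma to $\beta = (\eta_t)_-$, which presupposes $\beta(\cdot,t) \in H^1(\Omega)$. No such spatial regularity is available, even heuristically: by \eqref{EQ}, $\eta = -(\Delta u - u^3 + \lam u)_-$ is merely an $L^2(\Omega)$-function of $x$ (a spatial gradient of $\eta_t$ would involve third derivatives of $u$ and jumps across the free boundary $\{\Delta u - u^3 + \lam u = 0\}$), and the approximate sections $\eta_\lambda$ of \eqref{P-approx:2} lie only in $C^{0,1}([0,T];L^2(\Omega))$, again with no spatial differentiability; so the pairing $(\nabla v, \nabla \beta)$ is meaningless at every level of the construction and this step cannot be rescued by approximation. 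The gap is nonetheless fixable, and the fix is short: precisely because of your localization $\{\beta > 0\} \subseteq \{u_t = 0\}$, the function $-\beta$ is itself a section of $\partial \iI(u_t)$ (it is nonpositive, and it vanishes wherever $u_t > 0$); for such sections one has $(-\Delta u_t, -\beta) \ge 0$ directly, with no integration by parts, by the same monotonicity fact the paper invokes for $\zeta\eta$, whose rigorous analogue is Lemma \ref{L:phi-mono}(ii) applied to $-(\partial_t \eta_\lambda)_- \in \partial \iI(\partial_t u_\lambda)$. The tested identity then reads $\|\beta\|_2^2 = -(-\Delta u_t, -\beta) \le 0$, which closes the argument. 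Your final paragraph (``exploiting the monotonicity of $\partial \psi_\lambda$ together with the fact that $v_\lambda$ vanishes on the support of $(\partial_t \eta_\lambda)_-$'') points toward exactly this reasoning, but the proof as written rests on the invalid Stampacchia splitting rather than on it.
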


\begin{proof}[Formal proof of Lemma \ref{L:eta-dec}]
 Let $\zeta \in C^\infty_0(\Omega)$ be such that $\zeta \geq 0$ in $\Omega$ and \emph{formally} test \eqref{pde:v} by $\zeta \eta$. Then we find that
  \begin{align*}
  \dfrac{\d}{\d t} \int_\Omega \zeta \iI(u_t) \, \d x
  + \dfrac 1 2 \dfrac{\d}{\d t} \int_\Omega \zeta \eta^2 \, \d x
  + \left( - \Delta u_t ,  \zeta \eta \right)\\
  + 3 \int_\Omega u^2 u_t \eta \zeta \, \d x
  = \lam \int_\Omega u_t \eta \zeta \, \d x.
  \end{align*}
  Here we notice that $\zeta \eta$ also belongs to $\partial \iI(u_t)$ by $\zeta \geq 0$ and $\eta \in \partial \iI(u_t)$, and therefore,
  $$
 \left( - \Delta u_t ,  \zeta \eta \right) \geq 0.
 $$
  Then integrate both sides over $(s,t)$ and employ the facts that $u_t \eta = 0$ and $\iI(u_t) = 0$ a.e.~in $\Omega \times (0,\infty)$ to obtain
  $$
  \int_\Omega \zeta \eta^2(t) \, \d x \leq \int_\Omega \zeta \eta^2(s) \, \d x
  \quad \mbox{ for } \ t \geq s \geq 0.
  $$
 Likewise, noting $|\eta|^{r-2}\eta \in \partial \iI(u_t)$ for any $r \in (1,\infty)$, one can also obtain
  \begin{equation}\label{eta}
   \int_\Omega \zeta |\eta(t)|^r \, \d x \leq \int_\Omega \zeta |\eta(s)|^r \, \d x
  \quad \mbox{ for } \ t \geq s \geq 0 \ \mbox{ and } \ 1 < r < \infty
  \end{equation}
 for any $\zeta \in C^\infty_0(\Omega)$ satisfying $\zeta \geq 0$.
 In particular, let $r > 1$ be less than 2 and let $q \in (1,\infty)$ and $\zeta \in L^q(\Omega)$ be such that $\zeta \geq 0$ and $1/q + r/2 = 1$. Then one can take $\zeta_n \in C^\infty_0(\Omega)$ such that $\zeta_n \geq 0$ and $\zeta_n \to \zeta$ strongly in $L^{q}(\Omega)$. Moreover, passing to the limit in \eqref{eta} with $1 < r < 2$ and $\zeta$ replaced by $\zeta_n$ as $n \to \infty$, we deduce that
 \begin{equation}\label{eta2}
 \int_\Omega \zeta |\eta(t)|^r \, \d x \leq \int_\Omega \zeta |\eta(s)|^r \, \d x
  \quad \mbox{ for } \ t \geq s \geq 0 \ \mbox{ and } \ 1 < r < 2
 \end{equation}
 for any $\zeta \in L^q(\Omega)$, $\zeta \geq 0$, $1/q + r/2 = 1$. Then by \eqref{eta2}, we assure that $|\eta(x,t)|$ is non-increasing in $t$ for a.e.~$x \in \Omega$. Indeed, suppose on the contrary that $|\eta(x,t)| > |\eta(x,s)|$ for all $x \in \Omega_0$ and for some $t > s$ and $\Omega_0 \subset \Omega$ satisfying $|\Omega_0| > 0$. Substitute $\zeta = \chi_{\Omega_0}$, which is the characteristics function over $\Omega_0$, into \eqref{eta2} to obtain
 $$
 \int_{\Omega_0} |\eta(t)|^r \, \d x \leq  \int_{\Omega_0} |\eta(s)|^r \, \d x. 
 $$
 However, this fact contradicts the assumption.
\end{proof}

\begin{proof}[Formal proof of Theorem \ref{T:reform}]
 Let $Q_0 \subset \Omega \times (0,T)$ be such that \eqref{pde} holds and the section $\eta \in \partial \iI(u_t)$ is non-decreasing (in time) in $Q_0$ (more precisely, $\eta(x,t) \geq \eta(x,s)$ for any $(x,t), (x,s) \in Q_0$ satisfying $t \geq s$) and \emph{suppose that} $Q_0$ is well-defined and the compliment of $Q_0$ in $\Omega \times (0,T)$ has Lebesgue measure zero. Let $(x_0,t_0) \in Q_0$ be fixed. In case $u(x_0,t_0) = u_0(x_0)$, it is obvious that $\eta(x_0,t_0)$ belongs to $\partial \iII(u(x_0,t_0)) = (-\infty, 0]$. Hence \eqref{pde2} is satisfied at $(x_0,t_0)$. In case $u(x_0,t_0) > u_0(x_0)$, \emph{suppose that} there is a set $I \subset (0,t_0)$ with $|I| > 0$ such that $u_t(x_0,t) > 0$ for all $t \in I$. Therefore $\eta(x_0,t) = 0$ for $t \in I$ satisfying $(x_0,t) \in Q_0$. From the non-decrease of $\eta$, we derive that $\eta(x_0,t_0) = 0$. Thus \eqref{pde2} holds true at $(x_0,t_0)$. Consequently, the solution $u$ of (P) solves \eqref{pde2}--\eqref{ic2}, and moreover, $\eta$ belongs to $\partial \iII(u)$.

 One can prove in a standard way that the solution of \eqref{pde2}--\eqref{ic2} is uniquely determined by the initial datum $u_0$ (cf.~Theorem \ref{T:comp}). Hence, it turns out that (P) and \eqref{pde2}--\eqref{ic2} are equivalent each other, provided that the solution to (P) is unique.
\end{proof}

\begin{remark}[Ambiguity of the formal arguments above]\label{R:inaccuracy}
 {\rm
 In the present paper, we are working on Lebesgue space settings, instead of continuous function spaces. In particular, the section $\eta$ of $\partial \iI(u_t)$ is not supposed to be continuous in space. It seems natural in view of obstacle problems, since $u(x,t)$ may loose classical regularity and $\eta = - (\Delta u - u^3 + \lam u)_-$ (see Remark \ref{R:obstacle}).

Let us also point out an obscure point of the formal arguments above. The conclusion of Lemma \ref{L:eta-dec} is the following: for each $t > s \geq 0$, one can take a subset $\Omega_{t,s}$ of $\Omega$ such that $\eta(x,t) \geq \eta(x,s)$ for all $x \in \Omega_{t,s}$ and $|\Omega \setminus \Omega_{t,s}| = 0$. Indeed, as $\eta(x,t)$ may not be continuous in $(x,t)$ and it is defined only for a.e.~$(x,t) \in \Omega \times (0,T)$, we have not proved a pointwise monotonicity of $\eta(x,t)$ at each $t$. This fact prevents us to apply the argument above to prove Theorem \ref{T:reform}. Indeed, it is not obvious whether the set $Q_0$ is well-defined. In Appendix \S \ref{A:reform}, we shall approximate $\eta$ by a differentiable (in $t$) function $\eta_\lambda \in C^{0,1} ([0,T];L^2(\Omega))$ and prove the pointwise decrease of $\eta_\lambda$ in $t$. Even so, we still face a delicate issue whether a set corresponding to $Q_0$ has full Lebesgue measure or not, for $\eta_\lambda(\cdot,t)$ is defined only for a.e.~$x \in \Omega$ at each $t \geq 0$. Moreover, when we fix a point $x_0 \in \Omega$, the existence of the set $I$ where $u_t(x_0,\cdot) > 0$ and whose measure is positive is also obscure. However, these difficulties will be overcome by carefully carrying out a measure theoretic argument in Appendix \ref{A:reform}.
 }
\end{remark}

 \section{Comparison principle}\label{S:cp}

 This section is devoted to proving a comparison principle for the obstacle problem \eqref{pde2} as well as the strongly irreversible Allen-Cahn equation \eqref{iAC} (or equivalently, \eqref{pde}). Let us begin with the definition of $L^2$ sub- and supersolutions of \eqref{pde2} (and \eqref{pde}).

  \begin{definition}
   Let $T \in (0,\infty)$ be fixed. A function $u \in C([0,T];L^2(\Omega))$ is said to be an \emph{$L^2$ subsolution} {\rm (}or sub $L^2$ solution{\rm )} of \eqref{pde2} on $Q_T = \Omega \times (0,T)$, if the following conditions are all satisfied\/{\rm :}
   \begin{enumerate}
    \item[\rm (i)] $u$ belongs to the same class as in {\rm (i)} of Definition \ref{D:sol},
    \item[\rm (ii)] there exists $\eta \in L^\infty(0,T;L^2(\Omega))$ such that
		 \begin{equation}\label{subsol}
		 u_t + \eta - \Delta u + u^3 - \lam u \leq 0, \quad \eta \in \partial \iII(u) \ \mbox{ for a.e. } (x,t) \in \Omega \times (0,T).
		 \end{equation}
   \end{enumerate}
   A function $u \in C([0,T];L^2(\Omega))$ is said to be an \emph{$L^2$ supersolution} {\rm (}super $L^2$ solution{\rm )} of \eqref{pde2}  on $Q_T = \Omega \times (0,T)$, if {\rm (i)} and {\rm (ii)} above are satisfied with the inverse inequality of \eqref{subsol}. Furthermore, a sub- and a super $L^2$ solution of \eqref{pde} are also defined as above by replacing the inclusion of \eqref{subsol} with $\eta \in \partial \iI(u_t)$.
  \end{definition}

 Our result reads,
 
\begin{theorem}[Comparison principle for \eqref{pde2}]\label{T:comp}
 Let $u$ and $v$ be a sub- and a super $L^2$ solution for \eqref{pde2} with the obstacle function replaced by $u_0 = u(0)$ and $v_0 = v(0)$, respectively, in $Q_T = \Omega \times (0,T)$ for some $T > 0$. Suppose that $u \leq v$ a.e.~on the parabolic boundary $\partial_p Q_T = (\Omega \times \{0\}) \cup (\partial \Omega \times [0,T))$. Then it holds that
 $$
 u \leq v \ \mbox{ a.e.~in } Q_T.
 $$
 In particular, the solution of \eqref{pde2}--\eqref{ic2} is unique.
\end{theorem}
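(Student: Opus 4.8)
The plan is to run the standard testing argument on the difference $w := u - v$, the one new ingredient being that the two obstacle functions are automatically ordered by the initial hypothesis. Subtracting the supersolution inequality for $v$ from the subsolution inequality for $u$, I obtain sections $\eta_u \in \partial\iII(u)$ and $\eta_v \in \partial\iIIv(v)$ with
$$
w_t + (\eta_u - \eta_v) - \Delta w + (u^3 - v^3) - \lam w \leq 0 \quad \text{a.e. in } Q_T.
$$
Since $u,v \in C([\delta,T];H^1_0(\Omega)\cap L^4(\Omega))$ carry zero Dirichlet trace, the positive part $w_+ := (u-v)_+$ lies in $H^1_0(\Omega)\cap L^6(\Omega)$ for a.e.\ $t$, so it can be paired with every term (and the lateral part of the hypothesis $u\le v$ on $\partial_p Q_T$ is then vacuous). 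I would fix $0 < \delta < T$, test the inequality by $w_+$, and integrate over $\Omega$.

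The terms other than the obstacle term are handled routinely. The time derivative gives $(w_t, w_+) = \tfrac12 \tfrac{\d}{\d t}\|w_+\|_2^2$ (chain rule for the positive part, valid on $W^{1,2}(\delta,T;L^2(\Omega))$); integration by parts gives $(-\Delta w, w_+) = \|\nabla w_+\|_2^2 \geq 0$; monotonicity of $s \mapsto s^3$ gives $(u^3 - v^3, w_+) \geq 0$; and the reaction term contributes $-\lam(w,w_+) = -\lam\|w_+\|_2^2$.

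The heart of the matter, and the step I expect to be the main obstacle, is showing that the obstacle term $(\eta_u - \eta_v, w_+)$ is nonnegative. First, the hypothesis $u \leq v$ on $\Omega \times \{0\}$ yields the ordering of the obstacles, $u_0 \leq v_0$ a.e.\ in $\Omega$. On the contact set $\{w_+ > 0\} = \{u > v\}$ I then argue pointwise: if $u(x,t) = u_0(x)$ held there, then $u = u_0 \leq v_0 \leq v$ (using $v \geq v_0$), contradicting $u > v$; hence $u(x,t) > u_0(x)$, and the complementarity encoded in $\eta_u \in \partial\iII(u)$ forces $\eta_u(x,t) = 0$. Since $\eta_v \leq 0$ everywhere, it follows that $\eta_u - \eta_v = -\eta_v \geq 0$ on $\{w_+ > 0\}$, and therefore $(\eta_u - \eta_v, w_+) \geq 0$. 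This is precisely where the special structure (obstacle $=$ initial datum, with ordered data) is exploited.

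Collecting the estimates, all favorable contributions drop and I am left with $\tfrac12\tfrac{\d}{\d t}\|w_+\|_2^2 \leq \lam\|w_+\|_2^2$ on $(\delta,T)$. Gronwall's lemma gives $\|w_+(t)\|_2^2 \leq \|w_+(\delta)\|_2^2\, e^{2\lam(t-\delta)}$, and letting $\delta \to 0_+$, using $w \in C([0,T];L^2(\Omega))$ together with $w_+(0) = 0$, yields $w_+ \equiv 0$, that is, $u \leq v$ a.e.\ in $Q_T$. For the uniqueness assertion, two solutions of \eqref{pde2}--\eqref{ic2} sharing the datum $u_0$ are each simultaneously a sub- and a supersolution with the same obstacle, so the comparison applies in both directions and forces them to coincide.
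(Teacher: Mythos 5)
Your proposal is correct and takes essentially the same route as the paper's proof: subtract the two inequalities, test by $w_+$, drop the Laplacian and cubic terms by monotonicity, and neutralize the obstacle term via the ordering $v \geq v_0 \geq u_0$, finishing with Gronwall. The only cosmetic difference is in the treatment of the obstacle term: you argue pointwise on $\{u>v\}$ that $u > u_0$ forces $\eta_u = 0$ (with $\eta_v \leq 0$ handling the rest), whereas the paper shows the set $\{u = u_0\}\cap\{u \geq v\}$ is negligible or carries $w = 0$ — these are the same observation phrased two ways.
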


\begin{proof}
By subtracting inequalities (see \eqref{subsol}) and by setting $w := u - v$, we see that
$$
w_t - \Delta w + u^3 - v^3 \leq \lam w + \nu - \mu \ \mbox{ in } Q_T,
$$
where $\mu$ and $\nu$ are sections of $\partial \iII(u)$ and $\partial \iIIv(v)$, respectively. Test both sides by $w_+$. Then we have:
$$
\dfrac 1 2 \dfrac{\d}{\d t} \|w_+\|_2^2 \leq \lam \|w_+\|_2^2 + \int_\Omega (\nu - \mu) w_+ \, \d x \quad \mbox{ for a.e. } 0 < t < T.
$$
Here we observe by $\nu \leq 0$ and $\mu \in \partial \iII(u)$ that
\begin{align*}
 \int_\Omega (\nu - \mu)w_+ \, \d x
 &= \int_\Omega \nu w_+ \, \d x - \int_\Omega \mu w_+ \, \d x\\
 &\leq 
 - \int_{\{u = u_0\} \cap \{u \geq v\}} \mu w \, \d x.
 \end{align*}
Due to the fact that $v \geq v_0 \geq u_0$ a.e.~in $\Omega$, one of the following (i) and (ii) holds: (i) the set $\{u = u_0\} \cap \{u \geq v\}$ has Lebesgue measure zero; (ii) $w = 0$ for a.e.~$x \in \{u = u_0\} \cap \{u \geq v\}$. Hence it follows that
 $$
  \int_{\{u = u_0\} \cap \{u \geq v\}} \mu w \, \d x
 = 0.
  $$
Combining all these facts, we deduce that
$$
 \int_\Omega (\nu - \mu)w_+ \, \d x \leq 0.
 $$
 Therefore one obtains
 $$
 \dfrac 1 2 \dfrac{\d}{\d t} \|w_+\|_2^2 \leq \lam \|w_+\|_2^2 \quad \mbox{ for a.e. } 0 < t < T,
 $$
 and hence, applying Gronwall's inequality, we conclude that $w_+ \equiv 0$ a.e.~in $Q_T$, which completes the proof.
\end{proof}

Now, we exhibit a range-preserving property of solutions to (P) in the following:

\begin{corollary}\label{C:bdd}
Let $u$ be the \emph{unique} solution of {\rm (P)} such that $u$ also solves \eqref{pde2}--\eqref{ic} {\rm (}see Theorem \ref{T:reform}{\rm )}. Assume $u_0 \in L^\infty(\Omega)$. Then it holds that
 $$
 u_0(x) \leq u(x,t) \leq \max \left\{  \sqrt{\lam}  , \|u_0\|_{L^\infty(\Omega)} \right\} \ \mbox{ a.e.~in } \ \Omega \times (0,\infty),
 $$
and hence, $u \in L^\infty(\Omega \times (0,\infty))$.
\end{corollary}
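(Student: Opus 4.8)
The plan is to establish the two bounds separately. The lower bound $u_0(x) \le u(x,t)$ is essentially built into the problem: since $u$ solves the obstacle problem \eqref{pde2}--\eqref{ic} with obstacle $u_0$, a section $\eta$ of $\partial\iII(u)$ exists only where $u(x,t) \ge u_0(x)$. Equivalently, from \eqref{iAC} one has $u_t = (\Delta u - u^3 + \lam u)_+ \ge 0$, so $t \mapsto u(x,t)$ is non-decreasing and hence $u(x,t) \ge u(x,0) = u_0(x)$ a.e. It therefore remains to prove the upper bound with $M := \max\{\sqrt{\lam}, \|u_0\|_\infty\}$.

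For the upper bound I would argue directly by an energy estimate, in the spirit of the comparison principle (Theorem \ref{T:comp}) but avoiding the constant supersolution $\equiv M$, which fails to lie in the admissible class $H^1_0(\Omega)$. Fix $t > 0$, where by Theorem \ref{T:ex}~(i) the solution enjoys $u(t) \in H^2(\Omega) \cap H^1_0(\Omega)$ and $u_t(t) \in L^2(\Omega)$, and test \eqref{pde2} by $(u - M)_+$. Since $M \ge \sqrt{\lam} > 0$ and $u = 0$ on $\partial\Omega$, the test function $(u - M)_+$ vanishes near $\partial\Omega$ and thus belongs to $H^1_0(\Omega)$, which legitimizes integration by parts and yields
\begin{equation*}
\tfrac 1 2 \tfrac{\d}{\d t}\|(u - M)_+\|_2^2 + \|\nabla (u-M)_+\|_2^2 + \int_\Omega (u^3 - \lam u)(u-M)_+ \, \d x + \int_\Omega \eta\, (u-M)_+ \, \d x = 0.
\end{equation*}

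Next I would verify that the last three terms on the left are non-negative, which forces the $L^2$-norm of $(u-M)_+$ to be non-increasing. The diffusion term $\|\nabla(u-M)_+\|_2^2$ is manifestly non-negative. On the super-level set $\{u > M\}$ one has $u > M \ge \sqrt{\lam}$, whence $u^3 - \lam u = u(u^2 - \lam) > 0$, so the reaction term is non-negative. For the obstacle term, the crucial point is that on $\{u > M\}$ we have $u > M \ge \|u_0\|_\infty \ge u_0$, i.e.~$u$ stays strictly above the obstacle; there $\partial\iII(u) = \{0\}$, so $\eta = 0$ and the integral vanishes (alternatively, $\eta \le 0$ already suffices to make it non-positive). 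Hence $\tfrac{\d}{\d t}\|(u-M)_+\|_2^2 \le 0$, and integrating over $(\delta, t)$ and letting $\delta \to 0_+$ — using $u \in C([0,\infty);L^2(\Omega))$ together with the $1$-Lipschitz continuity of $w \mapsto (w - M)_+$ on $L^2(\Omega)$ — gives $\|(u-M)_+(t)\|_2^2 \le \|(u_0 - M)_+\|_2^2 = 0$, because $u_0 \le \|u_0\|_\infty \le M$. Therefore $u(\cdot,t) \le M$ a.e.~for every $t > 0$.

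The main obstacle is making the obstacle term \emph{disappear} rigorously rather than merely controlling its sign. This rests on the sharp reformulation of Theorem \ref{T:reform} — that $\eta$ is a section of $\partial\iII(u)$, hence supported on the contact set $\{u = u_0\}$ — together with the choice of threshold $M \ge \|u_0\|_\infty$, which keeps $\{u > M\}$ inside the non-contact region. A secondary technical point is the admissibility of the test function and the limiting argument $\delta \to 0_+$, both handled by the smoothing effect of Theorem \ref{T:ex}~(i) for $t>0$ and by the $L^2$-continuity of the solution up to $t = 0$.
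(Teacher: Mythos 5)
Your proof is correct in its main line, but it takes a genuinely different route from the paper's. The paper proves the upper bound in two lines: the constant function $U \equiv M := \max\{\sqrt{\lam}, \|u_0\|_{L^\infty(\Omega)}\}$ is a supersolution of \eqref{pde2} with obstacle $U(0) = M$ (one checks $M^3 - \lam M \geq 0$ and takes the zero section of $\partial I_{[M,\infty)}(M)$), $u \leq U$ holds on $\partial_p Q_T$, and Theorem \ref{T:comp} then yields $u \leq M$ in $Q_T$. You instead inline the energy estimate that underlies Theorem \ref{T:comp}, specialized to this constant comparison function: after testing with $(u-M)_+$, the gradient term is non-negative, the reaction term is non-negative because $u > M \geq \sqrt{\lam}$ gives $u^3 - \lam u > 0$ on $\{u > M\}$, and the obstacle term vanishes because $M \geq \|u_0\|_{L^\infty(\Omega)}$ forces $\{u > M\} \subset \{u > u_0\}$, where any $\eta \in \partial\iII(u)$ must be $0$. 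Your vanishing step is precisely the contact-set argument inside the paper's proof of Theorem \ref{T:comp}, and your direct use of $u^3 - \lam u > 0$ replaces the paper's splitting into $u^3 - v^3 \geq 0$ plus a Gronwall step, so you even dispense with Gronwall. Your detour also buys something real: the paper's definition of an $L^2$ supersolution requires membership in $C([\delta,T];H^1_0(\Omega) \cap L^4(\Omega))$ (via (i) of Definition \ref{D:sol}), which the positive constant $M$ violates, so the paper's appeal to Theorem \ref{T:comp} glosses over an admissibility issue (harmless, since its comparison proof only needs $(u-U)_+ \in H^1_0(\Omega)$); your self-contained estimate sidesteps it. For the admissibility of your test function, rather than arguing that $(u-M)_+$ ``vanishes near $\partial\Omega$'' (which presumes continuity up to the boundary, unavailable for general $N$), invoke the standard truncation fact: $w \in H^1_0(\Omega)$ and $G(s) = (s-M)_+$ Lipschitz with $G(0)=0$ (here $M>0$ is essential) imply $G(w) \in H^1_0(\Omega)$.

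One slip should be deleted: the parenthetical ``alternatively, $\eta \leq 0$ already suffices to make it non-positive.'' Non-positivity of $\int_\Omega \eta\,(u-M)_+ \,\d x$ is the wrong direction. In your tested identity this term sits on the left with a plus sign, so to conclude $\frac{\d}{\d t}\|(u-M)_+\|_2^2 \leq 0$ you need it to be non-negative; a merely non-positive term would push the derivative the wrong way. The only way to close the argument is the one you actually use — showing the term is zero via $\eta = 0$ on $\{u > M\}$ — which your final paragraph correctly identifies as the crux (the term must ``disappear rather than merely controlling its sign''). The parenthetical contradicts that observation and should be removed.
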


 \begin{proof}
  Due to the non-decrease of $u(x,t)$, it follows immediately that
 $$
  u_0(x) \leq u(x,t) \ \mbox{ a.e.~in } \Omega \times (0,\infty).
  $$
 On the other hand, by assumption, $u$ is also a solution of \eqref{pde2}. Moreover, the constant function $U(x,t) \equiv \max \left\{ \sqrt{\lam} , \|u_0\|_{L^\infty(\Omega)} \right\} \geq \sqrt{\lam}$ turns out to be a supersolution of \eqref{pde2}, and furthermore, one can observe that
 $$
 u(x,t) \leq U(x,t) \ \mbox{ a.e.~on } \partial_p Q_T \ \mbox{ for any } T > 0.
 $$
 Thus by Theorem \ref{T:comp}, we deduce that $u(x,t) \leq \max \left\{ \sqrt{\lam} , \|u_0\|_{L^\infty(\Omega)} \right\}$ a.e.~in $Q_T$ for any $T > 0$.
 \end{proof}

As for \eqref{iAC} (or equivalently \eqref{pde}), we shall exhibit two comparison principles under different additional assumptions. The following theorem provides a comparison principle for classical solutions of \eqref{iAC}:

\begin{theorem}[Comparison principle for classical solutions to \eqref{iAC}]\label{T:comp0}
 Let $u$ and $v$ be a sub- and a super $C^{2,1}$-solution for \eqref{iAC} in $Q_T = \Omega \times (0,T)$ for some $T > 0$, respectively. Suppose that $u \leq v$ a.e.~on the parabolic boundary $\partial_p Q_T = \Omega \times \{0\} \cup \partial \Omega \times [0,T)$. Then it holds that
 $$
 u \leq v \ \mbox{ a.e.~in } Q_T.
 $$
\end{theorem}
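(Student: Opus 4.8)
The plan is to run a classical maximum-principle argument on a suitably weighted difference, the only genuine new ingredient being that the nonlinearity $(\,\cdot\,)_+$ is monotone and $1$-Lipschitz. Recall that a sub- and a supersolution of \eqref{iAC} satisfy, respectively,
$$
u_t \leq \big( \Delta u - u^3 + \lam u \big)_+, \qquad v_t \geq \big( \Delta v - v^3 + \lam v \big)_+
$$
classically on $Q_T$. Fix a constant $K > \lam$ and set $z := (u-v)e^{-Kt}$, which is continuous on the compact cylinder $\overline{Q_T} = \overline\Omega \times [0,T]$. Arguing by contradiction, I would suppose $\max_{\overline{Q_T}} z > 0$. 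Since $u \leq v$ on $\Omega \times \{0\}$ and, by continuity from $\partial\Omega \times [0,T)$, also on $\partial\Omega \times [0,T]$, positivity of the weight $e^{-Kt}$ forces the maximum to be attained at an interior-in-space point $(x_0,t_0)$ with $x_0 \in \Omega$ and $t_0 \in (0,T]$.

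First I would read off the optimality conditions at $(x_0,t_0)$: the spatial Hessian of $z$ is negative semidefinite, whence $\Delta z(x_0,t_0) \leq 0$ and therefore $\Delta u(x_0,t_0) \leq \Delta v(x_0,t_0)$; and the (one-sided, if $t_0 = T$) time derivative satisfies $z_t(x_0,t_0) \geq 0$, which upon differentiating $z = (u-v)e^{-Kt}$ yields
$$
u_t(x_0,t_0) - v_t(x_0,t_0) \geq K\big(u(x_0,t_0) - v(x_0,t_0)\big) > 0,
$$
the strict positivity coming from $z(x_0,t_0) > 0$. Combining this with the sub- and supersolution inequalities gives, at $(x_0,t_0)$,
$$
\big( \Delta u - u^3 + \lam u \big)_+ - \big( \Delta v - v^3 + \lam v \big)_+ \geq u_t - v_t \geq K(u-v) > 0.
$$
Since $(\,\cdot\,)_+$ is nondecreasing, this strict inequality forces $\Delta u - u^3 + \lam u > \Delta v - v^3 + \lam v$ there; and since $(\,\cdot\,)_+$ is moreover $1$-Lipschitz, the elementary fact that $(a)_+ - (b)_+ \leq a - b$ whenever $a \geq b$ lets me bound the left-hand side above by $(\Delta u - u^3 + \lam u) - (\Delta v - v^3 + \lam v)$.

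It then remains to estimate this last difference. Writing it as $(\Delta u - \Delta v) - (u^3 - v^3) + \lam(u - v)$ and using $\Delta u \leq \Delta v$ together with $u^3 - v^3 = (u-v)(u^2 + uv + v^2) \geq 0$ (valid since $u > v$ at $(x_0,t_0)$ and $u^2 + uv + v^2 \geq 0$), I obtain
$$
\big( \Delta u - u^3 + \lam u \big) - \big( \Delta v - v^3 + \lam v \big) \leq \lam (u - v).
$$
Chaining the inequalities yields $K(u-v) \leq \lam(u-v)$ at $(x_0,t_0)$, contradicting $K > \lam$ and $u(x_0,t_0) - v(x_0,t_0) > 0$. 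Hence $\max_{\overline{Q_T}} z \leq 0$, i.e.\ $u \leq v$ in $Q_T$. The main obstacle is conceptual rather than computational: it is precisely the monotonicity-plus-Lipschitz structure of the positive-part function that allows one to pass from the outer inequality for $(\,\cdot\,)_+$ back to the inner argument $\Delta w - w^3 + \lam w$, after which the destabilizing reaction term $\lam(u-v)$ is neutralized by choosing the exponential weight with rate $K > \lam$, while the diffusion and cubic terms contribute with the favorable sign automatically.
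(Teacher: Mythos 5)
Your proof is correct and follows essentially the same route as the paper's: an exponential weight $e^{-Kt}$ with rate $K>\lam$, a contradiction argument at an interior spatial maximum, and the monotonicity together with the $1$-Lipschitz property of $(\,\cdot\,)_+$ (the paper packages the latter as $(a)_+-(b)_+\leq(a-b)_+$ before the maximum-principle step, while you invoke it pointwise at the maximum, a purely cosmetic difference). No gaps to report.
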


\begin{proof}
Let $u$ and $v$ be a sub- and a supersolution for (P), respectively. Then, it holds that
\begin{align*}
 \partial_t (u-v) &\leq \left( \Delta u - u^3 + \lam u \right)_+ - \left( \Delta v - v^3 + \lam v \right)_+\\
 &\leq \left( \Delta w - u^3 + v^3 + \lam w \right)_+,
\end{align*}
where we set $w := u - v$. Let $\alpha > 0$ be fixed so that
$r \mapsto \lam r_+ - \alpha r$ is strictly decreasing (e.g., $\alpha > \lam$). Subtracting $\alpha w$ from both sides, one has
$$
 w_t - \alpha w \leq \left( \Delta w - u^3 + v^3 + \lam w \right)_+ - \alpha w.
$$
Multiply both sides by $e^{-\alpha t}$ and set $z := e^{-\alpha t}w$. It then follows that
$$
z_t \leq \left( \Delta z - e^{-\alpha t}(u^3 - v^3) + \lam z \right)_+ - \alpha z.
$$
We claim that
$$
z \leq 0 \ \mbox{ in } Q := \Omega \times (0,T],
$$
which also implies
$$
u \leq v \ \mbox{ in } Q.
$$
Indeed, assume on the contrary that
 $$
 z(x_0,t_0) > 0
 $$
 at some $(x_0,t_0) \in Q$. Then
 $$
 \sup_{(x,t) \in Q} z(x,t) > 0,
 $$
 where the supremum is achieved by some $(x_1, t_1) \in \Omega \times (0,T]$. Then by Taylor's expansion,
 $$
 z_t \geq 0, \quad \nabla z = 0, \quad \Delta z \leq 0
 \quad \mbox{ at } \ (x_1,t_1).
 $$
 Hence
 $$
 0 \leq z_t \leq \left( \Delta z - e^{-\alpha t}(u^3 - v^3) + \lam z \right)_+ - \alpha z \leq \lam z_+ - \alpha z < 0 \quad \mbox{ at } \ (x_1,t_1).
 $$
 This yields a contradiction. Thus $z \geq 0$ on $Q$.
\end{proof}

One can also prove a comparison principle for \emph{strictly increasing} $L^2$-subsolutions for \eqref{pde}.
\begin{proposition}
 Let $u$ be an $L^2$-subsolution of \eqref{pde} in $Q_T$ satisfying $u_t > 0$ and let $v$ be an $L^2$-supersolution of \eqref{pde}  in $Q_T$. Suppose that $u \leq v$ a.e.~on $\partial_p Q_T$. Then it holds that
 $$
 u \leq v \ \mbox{ a.e.~in } Q_T.
 $$
\end{proposition}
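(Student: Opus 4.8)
The plan is to reduce the statement to an elementary parabolic comparison argument, essentially that of the proof of Theorem \ref{T:comp}, with one decisive simplification coming from the strict monotonicity hypothesis. Denote by $\eta_u \in \partial \iI(u_t)$ and $\eta_v \in \partial \iI(v_t)$ the sections associated with the sub- and supersolution, respectively. The first step exploits the structure \eqref{subid} of $\partial \iI$: since $u_t > 0$ a.e.~in $Q_T$, and $\partial \iI(s) = \{0\}$ for every $s > 0$, the section $\eta_u$ vanishes identically, so the subsolution inequality collapses to $u_t - \Delta u + u^3 - \lam u \leq 0$. On the other hand, for the supersolution one only knows that $v_t \geq 0$ (otherwise $\partial \iI(v_t)$ would be empty) and hence $\eta_v \leq 0$, since $\partial \iI(s) \subset (-\infty,0]$ for all admissible $s$. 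This is precisely where the hypothesis $u_t > 0$ enters: it guarantees that the difference of the subdifferential terms carries a favorable sign.

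Next I would set $w := u - v$ and subtract the supersolution inequality from the (simplified) subsolution inequality to obtain
$$
w_t - \eta_v - \Delta w + (u^3 - v^3) - \lam w \leq 0 \quad \mbox{ a.e.~in } Q_T.
$$
Testing both sides by $w_+$, I would treat the resulting terms as follows: the time-derivative term gives $\dfrac 1 2 \dfrac \d {\d t} \|w_+\|_2^2$; the diffusion term yields $\|\nabla w_+\|_2^2 \geq 0$ after integration by parts, with no boundary contribution because $w \in H^1_0(\Omega)$ forces $w_+$ to vanish on $\partial\Omega$; the cubic term satisfies $(u^3 - v^3, w_+) \geq 0$ by the monotonicity of $s \mapsto s^3$; and the subdifferential contribution obeys $-(\eta_v, w_+) \geq 0$ since $\eta_v \leq 0$ and $w_+ \geq 0$. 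Discarding these three nonnegative quantities leaves the differential inequality
$$
\dfrac 1 2 \dfrac \d {\d t} \|w_+\|_2^2 \leq \lam \|w_+\|_2^2 \quad \mbox{ for a.e.~} 0 < t < T.
$$

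Finally, since $u \leq v$ a.e.~on the parabolic boundary $\partial_p Q_T$, in particular at $t = 0$, one has $w_+(0) = 0$, and Gronwall's inequality then forces $w_+ \equiv 0$ a.e.~in $Q_T$, i.e.~$u \leq v$. I do not anticipate a serious obstacle here: the whole argument hinges on the single observation $\eta_u = 0$, which turns the problematic doubly-nonlinear $\partial \iI(u_t)$ term (the source of the difficulties noted in \S\ref{S:cp}) into a harmless sign condition. The only point requiring care is the standard justification that $w_+$ is an admissible test function and that the chain rule $(w_t, w_+) = \tfrac 1 2 \tfrac{\d}{\d t}\|w_+\|_2^2$ is valid in the regularity class of Definition \ref{D:sol}(i), which holds for functions in $W^{1,2}(\delta,T;L^2(\Omega)) \cap L^2(\delta,T;H^2(\Omega))$ followed by passage to the limit $\delta \to 0_+$ using continuity in $L^2(\Omega)$.
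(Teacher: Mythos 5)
Your proof is correct and follows essentially the same route as the paper's: the key observation that $u_t > 0$ forces $\eta_u \equiv 0$, subtraction of the two inequalities, testing with $w_+$, using $\eta_v \leq 0$ together with the monotonicity of $s \mapsto s^3$ and the sign of the diffusion term, and concluding by Gronwall. Your added remarks on the admissibility of $w_+$ as a test function and the limit $\delta \to 0_+$ only make explicit what the paper leaves implicit.
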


\begin{proof}
By assumption, we find that $\partial \iI(u_t) = \{0\}$, and therefore, \eqref{pde} holds with $\eta \equiv 0$. Subtract inequalities to see that
 $$
 w_t - \nu - \Delta w + u^3 - v^3 \leq \lam w \ \mbox{ in } Q_T,
 $$
 where $w := u - v$ and $\nu$ is a section of $\partial \iI(v_t)$. The multiplication of the both sides and $w_+$ yields
 $$
 \dfrac \d {\d t} \|w_+\|_2^2 - \int_\Omega \nu w_+ \, \d x
 + \|\nabla w_+\|_2^2 \leq \lam \|w_+\|_2^2.
 $$
 Here recall that $\nu \leq 0$, and therefore, by Gronwall's inequality,
 we conclude that $w_+ \equiv 0$, that is, $u \leq v$ in $Q_T$.
\end{proof}

\section{Phase set, semigroup and compact absorbing set}\label{S:ps}

The following two sections are devoted to constructing a global attractor of the Dynamical System (DS for short) generated by (P) as well as \eqref{pde2}--\eqref{ic2}. We emphasize again that due to the strong irreversibility, global attractor does not exist in any $L^p$-spaces. So we need a peculiar setting to extract energy-dissipation structures of the equation and to construct a global attractor. 
We start with setting up a (nonlinear) phase set and a metric on it. 

Let $r > 0$ be arbitrarily fixed and set a phase set $D = D_r$ (see \S \ref{S:wp} for the definition of $D_r$). Thanks to Theorem \ref{T:ex}, we see that $D$ is invariant under the evolution of solutions to (P). Furthermore, let us define a metric $\d (\cdot,\cdot)$ over the set $D$ by
 \begin{equation*}
 \d(u,v) := \|u - v\|_{H^1_0(\Omega)} + \|u - v\|_{L^4(\Omega)}
 \quad \mbox{ for } \ u , v \in D.
\end{equation*}
 Moreover, denote by $S_t : D \to D$ the semigroup associated with (P) and \eqref{pde2}--\eqref{ic2}, that is,
 $$
 S_t u_0 := u(t) \quad \mbox{ for } \ t \geq 0, \ u_0 \in D,
 $$
 where $u$ is the (unique) solution of (P) which also solves \eqref{pde2}--\eqref{ic2} and whose initial datum is $u_0$. By Theorems \ref{T:ex} and \ref{T:reform}, one can assure that $S_t$ is a continuous semigroup.

 We next set a subset of $D$ by
 $$
 B_0 := \Big\{
 u \in D \colon \|\Delta u - u^3 + \lam u\|_2^2 \leq  \rr  + 1, \
 \phi(u) \leq C_r + 1
 \Big\}
 $$
 with  $\rr = 2 \kappa M_0 + r + C_r$ and $C_r := C_r/(2\kappa)$ (see \eqref{diss} and \eqref{diss2}). Then the partial energy-dissipation estimates \eqref{diss} and \eqref{diss2} immediately ensure the following:

 \begin{lemma}\label{L:absorbing}
  The set $B_0$ is \emph{$D$-absorbing}, that is, for any bounded subsets $B$ of $(D,\d)$, one can take $\tau_B \geq 0$ such that $S_t B \subset B_0$ for all $t \geq \tau_B$.
 \end{lemma}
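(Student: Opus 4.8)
The plan is to show that $B_0$ absorbs every bounded subset $B$ of the metric space $(D,\d)$, using the two partial energy-dissipation estimates \eqref{diss} and \eqref{diss2} that are already available. The key observation is that $B_0$ is defined precisely by the two asymptotic bounds appearing on the right-hand sides of those estimates (plus a margin of $1$), so the entire argument reduces to making the transient exponential terms in \eqref{diss} and \eqref{diss2} small uniformly over $B$ after a finite time.

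First I would fix an arbitrary bounded set $B \subset (D,\d)$. Boundedness in the metric $\d$ means there is a constant $R_B > 0$ with $\|\nabla u_0\|_2 + \|u_0\|_4 \leq R_B$ for all $u_0 \in B$; in particular $\phi(u_0) = \tfrac12\|\nabla u_0\|_2^2 + \tfrac14\|u_0\|_4^4$ is bounded by some $M_B > 0$ uniformly over $u_0 \in B$. Also, since $E(u_0) \leq \phi(u_0)$, the energy $E(u_0)$ is bounded above uniformly on $B$. I would then invoke \eqref{diss}, which reads $\phi(t) \leq \tfrac{C_r}{2\lam} + e^{-2\lam t}[\phi(0) - \tfrac{C_r}{2\lam}]$. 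Since $\phi(0) \leq M_B$ uniformly, the bracketed term is bounded uniformly over $B$, so the exponential factor $e^{-2\lam t}$ can be made smaller than $1$ in the required sense: there is $\tau_1 = \tau_1(B) \geq 0$ such that $\phi(S_t u_0) = \phi(t) \leq C_r/(2\lam) + 1 = C_r + 1$ for all $t \geq \tau_1$ and all $u_0 \in B$ (recall the paper sets $C_r := C_r/(2\kappa)$, so this is exactly the second defining condition of $B_0$).

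Next I would treat the first defining condition of $B_0$, namely $\|\Delta u - u^3 + \lam u\|_2^2 \leq \rr + 1$, by applying \eqref{diss2}. That estimate bounds $\tfrac12\|\Delta u(t) - u^3(t) + \lam u(t)\|_2^2$ by $\lam M_0 + \tfrac12(r + \tfrac1t E(u_0) + \tfrac1t M_0) + \tfrac{C_r}{2} + \tfrac{\phi(0)}{2t}$; multiplying by $2$ and recalling $\rr = 2\kappa M_0 + r + C_r$, the limiting value of the right-hand side as $t \to \infty$ is exactly $\rr$. Since $E(u_0)$ and $\phi(0)$ are bounded uniformly over $B$, all the $1/t$ terms tend to $0$ uniformly on $B$, so there is $\tau_2 = \tau_2(B) \geq 0$ with $\|\Delta u(t) - u^3(t) + \lam u(t)\|_2^2 \leq \rr + 1$ for all $t \geq \tau_2$, $u_0 \in B$. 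Setting $\tau_B := \max\{\tau_1,\tau_2\}$ then yields $S_t B \subset B_0$ for all $t \geq \tau_B$, which is the assertion.

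The main subtlety, and the one point that deserves care rather than routine checking, is the \emph{uniformity} of the absorption time $\tau_B$ across all initial data in $B$. This uniformity is what distinguishes a genuine absorbing set from a merely pointwise-attracting one, and it hinges on the fact that both exponential/transient terms in \eqref{diss} and \eqref{diss2} are controlled purely through $\phi(0)$ and $E(u_0)$, both of which are uniformly bounded on a $\d$-bounded set $B$. I would make sure to state explicitly that $M_0 = \inf_{w \in H^1_0(\Omega)} E(w)$ and $C_r$ are constants independent of the particular solution, so that the target thresholds defining $B_0$ are genuinely fixed; the only $B$-dependence lives in the transient terms, and hence only $\tau_B$, not $B_0$ itself, depends on $B$. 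Finally I would note that $B_0 \subset D$ and is bounded in $(D,\d)$ (its defining bound on $\phi$ controls $\|\cdot\|_{H^1_0} + \|\cdot\|_{L^4}$), so the absorbing property is meaningful within the phase space.
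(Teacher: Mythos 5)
Your proof is correct and takes exactly the paper's route: the paper states Lemma \ref{L:absorbing} as an immediate consequence of the partial energy-dissipation estimates \eqref{diss} and \eqref{diss2}, and your argument simply fills in the (routine but worthwhile) details, namely that the transient terms depend on initial data only through $\phi(0)$ and $E(u_0)$, which are uniformly bounded on a $\d$-bounded set, so the absorption time can be chosen uniformly over $B$.
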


 We next prove the compactness of $B_0$ in $(D,\d)$.

 \begin{lemma}\label{L:B0}
  The set $B_0$ is compact in $(D,\d)$.  
 \end{lemma}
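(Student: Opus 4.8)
The plan is to show that any sequence $\{u_n\} \subset B_0$ admits a subsequence converging in the metric $\mathfrak{d}$, i.e.\ strongly in $H^1_0(\Omega) \cap L^4(\Omega)$, to some limit which again lies in $B_0$. The starting point is the uniform bound built into the definition of $B_0$: every $u_n$ satisfies $\|\Delta u_n - u_n^3 + \lambda u_n\|_2^2 \leq c_r + 1$ and $\phi(u_n) \leq C_r + 1$, the latter giving $\|\nabla u_n\|_2^2 + \tfrac12\|u_n\|_4^4 \leq 2(C_r+1)$. First I would extract from these an $H^2(\Omega)$ bound: writing $f_n := \Delta u_n - u_n^3 + \lambda u_n$, one has $-\Delta u_n = -f_n - u_n^3 + \lambda u_n$, and since $\|f_n\|_2$, $\|u_n\|_2$ (controlled by $\|\nabla u_n\|_2$ via Poincaré) and $\|u_n^3\|_2 = \|u_n\|_6^3$ (controlled by $\|\nabla u_n\|_2^3$ via $H^1_0 \hookrightarrow L^6$ for $N \leq 6$) are all uniformly bounded, elliptic regularity for $-\Delta$ with homogeneous Dirichlet data yields a uniform bound $\|u_n\|_{H^2(\Omega)} \leq C$. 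In particular $\{u_n\}$ is also uniformly bounded in $L^6(\Omega)$.

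With these bounds in hand, I would pass to a subsequence (not relabeled) so that $u_n \rightharpoonup u$ weakly in $H^2(\Omega)$ and weakly in $L^6(\Omega)$. By the Rellich--Kondrachov theorem, the embedding $H^2(\Omega) \hookrightarrow H^1_0(\Omega)$ is compact (as is $H^2(\Omega) \hookrightarrow L^q(\Omega)$ for any $q < 2^{**}$, which covers $L^4(\Omega)$ when $N \leq 5$, and in the relevant low dimensions covers $L^6$ as well), so along a further subsequence
\begin{equation*}
u_n \to u \quad \text{strongly in } H^1_0(\Omega) \text{ and in } L^4(\Omega).
\end{equation*}
This is precisely convergence in $\mathfrak{d}$, so it remains only to check $u \in B_0$. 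The membership $u \in H^2(\Omega) \cap H^1_0(\Omega) \cap L^6(\Omega)$ follows from the weak limits, and the constraint $\|(\Delta u - u^3 + \lambda u)_-\|_2^2 \leq r$ defining $D_r$ passes to the limit: from strong $H^1_0$ and $L^4$ (hence $L^6$ in low dimension) convergence one gets $u_n^3 \to u^3$ strongly in $L^2$, while $\Delta u_n \rightharpoonup \Delta u$ weakly in $L^2$, so $f_n \rightharpoonup f := \Delta u - u^3 + \lambda u$ weakly in $L^2$; since $\sigma \mapsto \|\sigma_-\|_2^2$ is convex and continuous, it is weakly lower semicontinuous, giving $\|f_-\|_2^2 \leq \liminf \|(f_n)_-\|_2^2 \leq r$. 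The two inequalities defining $B_0$ are likewise preserved: $\phi$ is convex and continuous on $H^1_0 \cap L^4$ hence weakly lower semicontinuous, and $\sigma \mapsto \|\sigma\|_2^2$ applied to $f_n \rightharpoonup f$ gives $\|f\|_2^2 \leq \liminf \|f_n\|_2^2 \leq c_r + 1$.

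The main obstacle I anticipate is twofold and dimension-sensitive. First, the elliptic-regularity bootstrap requires controlling $\|u_n^3\|_2$, which via Sobolev embedding needs $H^1_0 \hookrightarrow L^6$ (so $N \leq 6$) to close; in higher dimensions the cubic nonlinearity is not $H^1$-subcritical and the $H^2$ bound would need a different argument (this matches the cubic-growth dimensional restriction flagged for part (v) of Theorem~\ref{T:ex}). Second, the compactness of the embedding into $L^4$ and the strong convergence $u_n^3 \to u^3$ in $L^2$ both rely on a subcriticality margin; one must verify that in the dimensions actually treated these embeddings are genuinely compact rather than merely continuous. Once the uniform $H^2$ estimate is secured, however, the remainder is a routine application of Rellich--Kondrachov together with weak lower semicontinuity of the convex constraint functionals, so the real work is concentrated in the regularity estimate rather than in the limit passage itself.
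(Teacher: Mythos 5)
Your overall architecture (uniform $H^2$-type bound $\Rightarrow$ weak compactness $\Rightarrow$ compact embedding $\Rightarrow$ weak lower semicontinuity of the constraints) matches the paper's, and the limit-passage part via convexity of $f \mapsto \|f_-\|_2^2$ is exactly the paper's argument. The genuine gap is in the step you yourself identify as "the real work": your derivation of the $H^2(\Omega)$ and $L^6(\Omega)$ bounds. You control $\|u_n^3\|_2 = \|u_n\|_6^3$ by $\|\nabla u_n\|_2^3$ "via $H^1_0 \hookrightarrow L^6$ for $N \leq 6$", but this embedding requires $2N/(N-2) \geq 6$, i.e. $N \leq 3$, not $N \leq 6$; so your bootstrap closes only in dimensions up to three. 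Since Lemma \ref{L:B0} carries no dimensional restriction and is the compactness input for the global attractor construction in \S \ref{S:ps}--\S \ref{S:At} (where only the uniqueness statements (iv)--(v) of Theorem \ref{T:ex} are dimension-restricted), a proof valid only for $N \leq 3$ does not establish the lemma as stated. The same issue resurfaces at the end: your identification of the limit needs $u_n^3 \to u^3$ strongly in $L^2$, and the compactness of $H^2 \hookrightarrow L^4$ itself fails for $N \geq 8$, so the "subcriticality margin" you flag cannot simply be "verified" in general — it is absent.

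The paper's proof removes all dimension dependence with one algebraic observation that replaces your elliptic/Sobolev bootstrap: for $v \in \HzweiLsechs$ with $v|_{\partial\Omega}=0$, the cross term satisfies $(-\Delta v, v^3) = 3\int_\Omega v^2 |\nabla v|^2 \, \d x \geq 0$, whence
\begin{equation*}
\|-\Delta v\|_2^2 + \|v^3\|_2^2 \leq \|-\Delta v + v^3\|_2^2 .
\end{equation*}
Since the $B_0$-constraint bounds $\|-\Delta u_n + u_n^3\|_2 \leq \sqrt{\rr+1} + \lam\|u_n\|_2 \leq C$ directly, this single inequality yields boundedness of $(u_n)$ in $H^2(\Omega)$ \emph{and} in $L^6(\Omega)$ simultaneously, in every dimension, with no Sobolev embedding at all. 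The $L^6$ bound then does double duty downstream: it makes the embedding $\HzweiLsechs \hookrightarrow \HeinLvier$ compact in any dimension (interpolating $\|u\|_4 \leq \|u\|_2^{1/4}\|u\|_6^{3/4}$ against the compact embedding $H^2 \hookrightarrow L^2$), and it lets the paper identify the limit of $u_n^3$ using only strong $L^q(\Omega)$ convergence for $q<2$ together with weak $L^2(\Omega)$ convergence — weak convergence of $\Delta u_n - u_n^3 + \lam u_n$ is all that the lower-semicontinuity argument requires. If you replace your elliptic-regularity step by this cross-term inequality and weaken your use of strong $L^2$ convergence of the cubes to weak convergence, your proof becomes the paper's and holds for all $N$.
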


 \begin{proof}
  To prove this lemma, let us define a functional $C : L^2(\Omega) \to [0,\infty)$ by
 $$
 C(f) := \int_\Omega \big( f(x) \big)_-^2 \, \d x
 \quad \mbox{ for } \ f \in L^2(\Omega).
 $$
 We then observe that $C(\cdot)$ is (strongly) continuous in $L^2(\Omega)$ and convex. Hence $C(\cdot)$ is also weakly lower semicontinuous in $L^2(\Omega)$ by the convexity.

  Let $(u_n)$ be a sequence in $B_0$. Then obviously, $(u_n)$ is bounded in $\HeinLvier$, and moreover,
  $$
  \|-\Delta u_n + u_n^3\|_2 \leq \sqrt{ \rr  +1} + \lam \|u_n\|_2 \leq C.
  $$
Noting that
  $$
  \|- \Delta v\|_2^2 + \|v^3\|_2^2
  \leq \|-\Delta v + v^3\|_2^2 \quad \mbox{ for all } \ v \in \HzweiLsechs,
  $$
  we deduce that $(u_n)$ is bounded in $\HzweiLsechs$. Then
  $$
  u_n \to u \quad \mbox{ weakly in } \HzweiLsechs
  $$
  for some $u \in \HzweiLsechs$.
  Moreover, by the compact embedding $\HzweiLsechs \hookrightarrow \HeinLvier$, one can take a subsequence of $(n)$ without relabeling such that 
 $$
  u_n \to u \quad \mbox{ in } \ (D,\d),
  \ \mbox{ i.e., strongly in } \HeinLvier,
 $$
 which also yields
 \begin{alignat*}{4} 
 \Delta u_n &\to \Delta u \quad &&\mbox{ strongly in } H^{-1}(\Omega) \mbox{ and weakly in } L^2(\Omega),\\
  u_n^3 &\to u^3 \quad &&\mbox{ strongly in } L^q(\Omega)
  \mbox{ and weakly in } L^2(\Omega),\ 1 \leq q <2.
 \end{alignat*}
Hence we see that
 $$
 \Delta u_n - u_n^3 + \lam u_n \to \Delta u - u^3 + \lam u \quad \mbox{ weakly in } L^2(\Omega).
 $$
 Since $C(\cdot)$ is weakly lower semicontinuous in $L^2(\Omega)$ and $u_n \in D$, it follows that
 $$
 C(\Delta u - u^3 + \lam u) \leq \liminf_{n \to \infty} C(\Delta u_n - u_n^3 + \lam u_n) \leq r,
 $$
 which implies $u \in D$. Moreover, the weak lower semicontinuity of $\|\cdot\|_2$ and $\phi$ leads us to obtain $u \in B_0$.
 \end{proof}

\begin{remark}[Set up of the phase set]
 {\rm
 \begin{enumerate}
  \item 
	The phase space $D$ assigned here is nonlinear and non-convex (cf.~see also Proposition \ref{P:Dconv} below). Furthermore, we stress that $D$ is unbounded. The metric $\d$ is chosen such that $B_0$ becomes compact in $(D,\d)$.
  \item One may replace the phase set $D$ by its closure in $\HeinLvier$. Then the compactness of $B_0$ follows in a simpler way; indeed, it suffices to prove the precompactness of $B_0$ in $(D,\d)$. However, we address ourselves to the phase set $D$ instead of its closure.
 \end{enumerate}
 }
\end{remark}
 
\section{Construction of a $(D,\d)$-global attractor}\label{S:At}

In this section, we shall construct a \emph{global attractor} defined in the following sense for the DS  generated by (P) on the phase set $(D,\d)$:

\begin{definition}[$(D,\d)$-global attractor]\label{D:at}
 A subset $\mathcal U$ of $D$ is called a \emph{$(D,\d)$-global attractor} associated with the DS $(S_t, (D,\d))$ if the following conditions hold true\/{\rm :}
 \begin{enumerate}
  \item $\mathcal U$ is compact in $(D,\d)$.
  \item $\mathcal U$ satisfies an \emph{attraction property} in $(D,\d)$, that is, let $B \subset D$ be a $\d$-bounded subset of $D$ {\rm (}i.e., the diameter $\mathrm{diam}(B) := \sup\{\d(u,v) \colon u,v \in B\}$ is finite{\rm )}. 
	Then for any neighborhood $\mathcal O$ of $\mathcal U$ in $(D,\d)$, there exists $\tau_{\mathcal O} \geq 0$ such that $S_t B \subset \mathcal O$ for all $t \geq \tau_{\mathcal O}$.
  \item $\mathcal U$ is strictly invariant, i.e., for any $t \geq 0$, it holds that $S_t \, \mathcal U = \mathcal U$.
 \end{enumerate}
\end{definition}

Our result reads,

 \begin{theorem}[Existence of $(D,\d)$-global attractor]\label{T:GA}
The DS $(S_t, (D,\d))$ admits the $(D,\d)$-global attractor $\mathcal U$, which is given by
  \begin{equation}\label{U}
\mathcal U := \bigcap_{\tau \geq \tau_0} \overline{F_\tau}, \quad
  F_\tau := \bigcup_{t \geq \tau} S_t B_0,
  \end{equation}
where $\tau_0$ is a positive constant and $\overline{F_\tau}$ stands for the closure of $F_\tau$ in $(D,\d)$. Moreover, $\mathcal U$ is the maximal bounded strictly invariant set, and therefore, the $(D,\d)$-global attractor is unique.
 \end{theorem}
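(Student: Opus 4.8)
The plan is to invoke the standard construction of a global attractor for a dynamical system possessing a compact absorbing set, carefully transcribed to the metric space $(D,\d)$, which lacks linear and convex structure. All the structural ingredients are already at hand: by Theorems \ref{T:ex} and \ref{T:reform} the map $S_t$ is a well-defined continuous semigroup on $(D,\d)$, while Lemma \ref{L:absorbing} provides the $D$-absorbing set $B_0$ and Lemma \ref{L:B0} guarantees that $B_0$ is compact in $(D,\d)$. As a first step I would fix $\tau_0 := \tau_{B_0}$, the self-absorption time furnished by Lemma \ref{L:absorbing} applied to $B = B_0$, so that $S_t B_0 \subset B_0$ for all $t \geq \tau_0$ and hence $F_\tau \subset B_0$ for every $\tau \geq \tau_0$. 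Since $B_0$ is compact, and thus closed, this yields $\overline{F_\tau} \subset B_0$ for all $\tau \geq \tau_0$, confining the entire construction inside the compact set $B_0$.

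Nonemptiness and compactness of $\mathcal U$ then follow at once: the family $\{\overline{F_\tau}\}_{\tau \geq \tau_0}$ is a nested decreasing family of nonempty closed subsets of the compact set $B_0$, so by the finite-intersection property its intersection $\mathcal U$ is nonempty and compact in $(D,\d)$. This settles condition (i) of Definition \ref{D:at}. For the attraction property (ii) I would argue by contradiction: suppose a $\d$-bounded set $B \subset D$ and a neighborhood $\mathcal O$ of $\mathcal U$ are such that $S_{t_n} B \not\subset \mathcal O$ for some $t_n \to \infty$, and pick $b_n \in B$ with $\mathrm{dist}_{\d}(S_{t_n} b_n, \mathcal U) \geq \vep > 0$. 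By Lemma \ref{L:absorbing} one has $S_{t_n} B \subset B_0$ once $t_n \geq \tau_B$, so $S_{t_n} b_n$ lies in the compact set $B_0$ for all large $n$; extracting a convergent subsequence, its limit $u_\ast$ must belong to $\overline{F_\tau}$ for every $\tau$ (because $S_{t_n} b_n \in F_\tau$ for all large $n$) and hence to $\mathcal U$, contradicting $\mathrm{dist}_{\d}(\,\cdot\,,\mathcal U) \geq \vep$. This gives the required attraction.

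The delicate step is the strict invariance (iii), $S_t \mathcal U = \mathcal U$. The inclusion $S_t \mathcal U \subset \mathcal U$ follows from the continuity of $S_t$ together with the nested structure: for $u \in \mathcal U \subset \overline{F_{\tau+t}}$ one has $S_t u \in S_t \overline{F_{\tau+t}} \subset \overline{S_t F_{\tau+t}} \subset \overline{F_\tau}$ by the semigroup law, and letting $\tau$ range over $[\tau_0,\infty)$ gives $S_t u \in \mathcal U$. For the reverse inclusion $\mathcal U \subset S_t \mathcal U$ I would construct, for each $u \in \mathcal U$, a pre-image inside $\mathcal U$: writing $u = \lim_k S_{s_k} b_k$ with $s_k \to \infty$ and $b_k \in B_0$, the points $S_{s_k - t} b_k$ lie in the compact set $B_0$ once $s_k - t \geq \tau_0$, so a subsequence converges to some $v \in \mathcal U$, and the continuity of $S_t$ gives $S_t v = \lim_k S_{s_k} b_k = u$.

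I expect this backward-orbit construction to be the main obstacle, though not because of the irreversibility: we never invert $S_t$, only build bounded complete orbits inside the compact $B_0$. The genuine points requiring care are that the continuity of $S_t$ holds in the metric $\d$ and that every limit extracted remains in $D$ itself, rather than merely in its $H^1_0 \cap L^4$-closure; both are secured precisely by Theorem \ref{T:ex} and by the compactness statement of Lemma \ref{L:B0}. Finally, maximality follows by a routine argument: any $\d$-bounded strictly invariant set $\mathcal A$ satisfies $\mathcal A = S_t \mathcal A \subset B_0$ for $t \geq \tau_{\mathcal A}$, whence $\mathcal A \subset F_\tau \subset \overline{F_\tau}$ for all $\tau \geq \tau_0$ and therefore $\mathcal A \subset \mathcal U$; since $\mathcal U$ is itself $\d$-bounded and strictly invariant, this maximality forces uniqueness of the $(D,\d)$-global attractor.
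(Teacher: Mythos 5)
Your proposal is correct and follows essentially the same route as the paper: the compact $D$-absorbing set $B_0$ from Lemmas \ref{L:absorbing} and \ref{L:B0}, the definition of $\mathcal U$ as $\bigcap_{\tau \geq \tau_0}\overline{F_\tau}$, attraction via a compactness-plus-contradiction argument, strict invariance from the continuity of $S_t$ and the semigroup law, and maximality among bounded strictly invariant sets yielding uniqueness. The only cosmetic deviation is that you derive $S_t\,\mathcal U \subset \mathcal U$ from the set-theoretic inclusion $S_t \overline{F_{\tau+t}} \subset \overline{S_t F_{\tau+t}} \subset \overline{F_\tau}$ instead of the paper's diagonal-sequence argument, which is a harmless streamlining of the same idea.
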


  \begin{proof}
   The following proof is essentially based on a standard theory (see~\cite[Chap 2, \S 2]{BV} and also~\cite{EfGak}); however,  for a convenience of the reader,  we give a proof specific to our setting instead of applying a ready-made theorem, since there seem to be slight differences of basic settings (e.g., in~\cite[Chap 2, \S 2]{BV}, an absorbing set is supposed to attract all bounded sets in a \emph{Banach space}; on the other hand, in our setting, the absorbing set $B_0$ attracts any bounded sets in the \emph{metric space} $(D,\d)$). In what follows, we shall check three conditions (i)--(iii) of Definition \ref{D:at} for the set $\mathcal U$ given by \eqref{U}.
 
\noindent
\emph{Compactness in $(D,\d)$.} Here we note that $F_\tau \subset B_0$ for any $\tau \geq t_0$ and some $t_0 \geq 0$. Hence $\overline{F_\tau}$ is compact in $(D,\d)$ and included in $B_0$. Therefore $\mathcal U$ is included in $B_0$ and compact in $(D,\d)$.

\noindent
\emph{Attraction property in $(D,\d)$.} To prove this, suppose on the contrary that there exist a neighborhood $\mathcal O_0$ of $\mathcal U$ in $(D,\d)$ and a sequence $t_n \to \infty$ such that $S_{t_n} B \cap (D \setminus \mathcal O_0) \neq \emptyset$. Let us take $y_n \in S_{t_n} B \cap (D \setminus \mathcal O_0)$. Since $B_0$ is $D$-absorbing, one can take $\tau_B \geq 0$ such that $S_t B \subset B_0$ for all $t \geq \tau_B$. Hence for $n \gg 1$ satisfying $t_n \geq \tau_B$, one observes that $y_n \in S_{t_n} B \subset  B_0$. Therefore, up to a subsequence, $y_n$ converges to an element $y$ of $B_0$ in $(D,\d)$. Moreover, let $n_0 \in \mathbb N$ be such that $S_{t_{n_0}} B \subset B_0$. Then since $y_n \in S_{t_n - t_{n_0}} \circ S_{t_{n_0}} B \subset S_{t_n - t_{n_0}} B_0 \subset \overline{F_{t_n - t_{n_0}}}$ for all $n \gg 1$, the limit $y$ belongs to $\mathcal U$ (see Lemma \ref{L:apdx-01} below). On the other hand, by $y_n \in D \setminus \mathcal O_0$, the limit $y$ never belongs to $\mathcal U$. This is a contradiction. Therefore $\mathcal U$ enjoys the attraction property in $(D,\d)$.

   \begin{lemma}\label{L:apdx-01}
    Let $(X_n)$ be a sequence of closed subsets of a metric space $(D,\d)$.
    Let $y_n \in X_n$ be such that $y_n \to y$ in $(D,\d)$.
    In addition, suppose that $X_n \subset X_m$ if $n \geq m$.
    Then it holds that
    $$
    y \in \bigcap_{k \in \mathbb N} X_k.
    $$
   \end{lemma}

   \begin{proof}
    Let $k \in \mathbb N$ be arbitrarily fixed. For any $n \geq k$, we recall $y_n \in X_n \subset X_k$. Hence the closedness of $X_k$ implies $y \in X_k$. From the arbitrariness of $k$, we conclude that $y \in \cap_{k \in \mathbb N} X_k$.
   \end{proof}

\noindent
\emph{Strict invariance.} We claim that $S_t \,\mathcal U \subset \mathcal U$. Let $y \in S_t \,\mathcal U$. Then there exists $u \in \mathcal U$ such that $y = S_t u$. Moreover, $u$ belongs to $\overline{F_\tau}$ for any $\tau \geq \tau_0$. Hence in particular, by a diagonal argument, one can take a sequence $u_n \in F_n$ such that $u_n \to u$ in $(D,\d)$. Indeed, for each $m \in \mathbb N$, since $u$ belongs to $\overline{F_m}$, one can take a sequence $(u^{(m)}_n)_{n \in \mathbb N}$ in $F_m$ such that $u^{(m)}_n \to u$ in $(D,\d)$ as $n \to \infty$. Now let $u_n := u^{(n)}_n \in F_n$ and observe that $u_n \to u$ in $(D,\d)$.
 Furthermore, there exist sequences $t_n \geq n$ and $b_n \in B_0$ such that $u_n = S_{t_n} b_n$. 
Here one can suppose that $t_n$ is increasing without any loss of generality. Thus one can write
$$
y = S_t u = S_t ( \lim_{n \to \infty} u_n) = \lim_{n \to \infty} S_t u_n = \lim_{n \to \infty} S_t \circ S_{t_n} b_n = \lim_{n \to \infty} S_{t + t_n} b_n.
$$
Here we used the continuity of $S_t$ in $(D,\d)$, which follows from the continuous dependence of solutions for \eqref{pde}--\eqref{ic} on initial data (see Theorem \ref{T:ex}), to verify the third equality. Noting that $b_n \in B_0$, we deduce that $S_{t + t_n} b_n \in F_{t + t_n}$. Therefore $y$ belongs to $\mathcal U$ (see Lemma \ref{L:apdx-01}), which also implies the relation $S_t \, \mathcal U \subset \mathcal U$. We next show $\mathcal U \subset S_t \, \mathcal U$. Let $y \in \mathcal U$ be fixed. Then $y$ belongs to $\overline{F_\tau}$ for all $\tau \geq \tau_0$. Hence one can particularly take a sequence $t_n \nearrow \infty$ and $b_n \in B_0$ such that $y = \lim_{n \to \infty} S_{t_n} b_n$. Note that
$$
y = \lim_{n \to \infty} S_{t_n} b_n = \lim_{n \to \infty} S_t \circ S_{t_n - t} b_n = S_t \left( \lim_{n \to \infty} S_{t_n - t} b_n \right)
$$
for each $t > 0$. Here we used the continuity of $S_t$ again and further noticed that
$$
S_{t_n - t} b_n \in S_{t_n - t} B_0 \subset B_0
$$
for $n \gg 1$, since $B_0$ is $D$-absorbing, and therefore, the compactness of $B_0$ implies that $S_{t_n - t} b_n$ converges to an element $u_1 \in B_0$ in $(D,\d)$, up to a subsequence, as $n \to \infty$. Recall that $S_{t_n - t} b_n \in F_{t_n - t}$ for $n \gg 1$ to obtain
$$
u_1 = \lim_{n \to \infty} S_{t_n - t} b_n \in \mathcal U
   $$
   (see also Lemma \ref{L:apdx-01}).
   Thus $y$ belongs to $S_t \, \mathcal U$. Consequently, we conclude that $S_t \, \mathcal U = \mathcal U$.

   Finally, let us prove the maximality of $\mathcal U$ among bounded strictly invariant sets. Indeed, let $\mathcal V$ be a bounded strictly invariant set in $(D,\d)$. Then since $\mathcal V$ is a bounded set in $(D,\d)$, one can take $\tau \geq 0$ such that $S_t \,\mathcal V \subset B_0$ for all $t \geq \tau$. From \eqref{U} along with the strict invariance of $\mathcal V$, it follows that $\mathcal V \subset \mathcal U$. Thus $\mathcal U$ is maximal.
  \end{proof}

  Not surprisingly, we observe that
  \begin{proposition}\label{P:GA}
   Let $r > 0$ and let $\psi \in D_r$ be a solution of the inclusion,
   \begin{equation}\label{sta}
    \partial \iI(0) - \Delta \psi + \psi^3 - \lam \psi \ni 0 \ \mbox{ in } L^2(\Omega)
   \end{equation}
   {\rm (}hence $\psi$ is a supersolution to the elliptic Allen-Cahn equation \eqref{cAC-e}{\rm )}.
   Then $\psi$ belongs to the global attractor $\mathcal U$ constructed in Theorem \ref{T:GA} under the phase set $D = D_r$.
  \end{proposition}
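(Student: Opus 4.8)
The plan is to show that $\psi$ is a fixed point of the semigroup $S_t$ and then to invoke the maximality of $\mathcal U$ among bounded strictly invariant sets, recorded in Theorem \ref{T:GA}.

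First I would reinterpret the inclusion \eqref{sta}. By the definition of $\partial \iI$ at $0$ (see \eqref{subid}), the inclusion \eqref{sta} asserts the existence of $\eta \in L^2(\Omega)$ with $\eta(x) \leq 0$ a.e.~in $\Omega$ such that $\eta - \Delta \psi + \psi^3 - \lam \psi = 0$ a.e., equivalently $-\Delta \psi + \psi^3 - \lam\psi \geq 0$ a.e.~(so $\psi$ is a supersolution of \eqref{cAC-e}). Since $\psi \in D_r \subset \HzweiLsechs$ with $H^1_0(\Omega)$, the time-independent function $u(t) \equiv \psi$ lies in the regularity class of Definition \ref{D:sol} (with $u_t \equiv 0$), and since $\eta \in \partial \iI(0) = \partial \iI(u_t)$, it satisfies \eqref{EQ}. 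Hence $u(t) \equiv \psi$ is a solution of {\rm (P)} with initial datum $\psi$.

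Next I would check that this constant solution is the one selected by the reformulation of Theorem \ref{T:reform}, so that $S_t \psi = \psi$. As $u \equiv \psi$ sits exactly on the obstacle (here $u_0 = \psi$, hence $u = u_0$), the pointwise subdifferential is $\partial \iII(\psi(x)) = (-\infty,0]$, and taking the same $\eta \leq 0$ as above yields $u_t + \eta - \Delta u + u^3 - \lam u = 0$ with $\eta \in \partial \iII(u)$; thus $u \equiv \psi$ solves \eqref{pde2}--\eqref{ic2}. By the uniqueness part of Theorem \ref{T:reform} (equivalently Theorem \ref{T:comp}), this constant function is precisely the selected solution, so $S_t \psi = \psi$ for all $t \geq 0$. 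Finally, since $\psi \in D_r = D$, the singleton $\{\psi\}$ is a $\d$-bounded subset of $(D,\d)$ (its diameter is $0$) and is strictly invariant, because $S_t \{\psi\} = \{S_t \psi\} = \{\psi\}$ for every $t \geq 0$. By the maximality assertion of Theorem \ref{T:GA}, every bounded strictly invariant set is contained in $\mathcal U$, whence $\{\psi\} \subset \mathcal U$, i.e., $\psi \in \mathcal U$.

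The main obstacle is less a genuine difficulty than a point demanding care: the identification $S_t \psi = \psi$. One must confirm that the constant-in-time function is the \emph{selected} solution furnished by Theorem \ref{T:reform}, which is guaranteed by uniqueness for the obstacle reformulation rather than by uniqueness for {\rm (P)} itself, the latter being unavailable for general $u_0 \in \overline{D_r}^{L^2}$. Once this fixed-point property is secured, the conclusion follows immediately from the maximality of $\mathcal U$.
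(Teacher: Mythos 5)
Your proof is correct, and its core coincides with the paper's: both arguments hinge on showing that $u(x,t) \equiv \psi(x)$ is a stationary solution, so that $S_t \psi = \psi$ for all $t \geq 0$. The difference lies in the endgame and in one point of care. The paper concludes by observing that, since $\{\psi\}$ is bounded and $B_0$ is $D$-absorbing (Lemma \ref{L:absorbing}), the fixed point $\psi$ must itself lie in $B_0$; then $\psi = S_t \psi \in S_t B_0 \subset F_\tau$ for every $t \geq \tau$, so $\psi \in \overline{F_\tau}$ for all $\tau$ and hence $\psi \in \mathcal U$ directly from the formula \eqref{U}. You instead invoke the maximality of $\mathcal U$ among bounded strictly invariant sets, applied to the singleton $\{\psi\}$; this is equally valid and slightly more abstract, though maximality was itself proved in the paper via the same absorbing-set mechanism, so nothing is gained or lost. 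On the other hand, you are more careful than the paper on the identification $S_t\psi = \psi$: since the semigroup is defined through the \emph{selected} solution of Theorem \ref{T:reform} (uniqueness for (P) alone being unavailable), one should check, as you do, that the constant function also solves the obstacle reformulation \eqref{pde2}--\eqref{ic2} (here $\partial \iII(\psi) = \partial \iI(0)$ pointwise, so the same section $\eta \leq 0$ works) and then appeal to the uniqueness of such solutions. The paper passes over this point silently, so your version fills in a detail the paper leaves implicit.
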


  \begin{proof}
   We note that \eqref{sta} corresponds to a stationary equation for (P). More precisely, $u(x,t) \equiv \psi(x)$ is a solution for (P) with $u_0 = \psi$. Hence $\psi$ must belong to the absorbing set $B_0$ (see Lemma \ref{L:absorbing}). Therefore by means of \eqref{U} along with the fact $S_\tau \psi = \psi$, one can conclude that $\psi \in \mathcal U$.
  \end{proof}
  
 The connectedness of the global attractor $\mathcal U$ (with $D = D_r$) is not proved due to the peculiar setting of the phase set $D_r$. However, we can prove it by assigning the following set $D_r^+$ to the phase set $D$ instead of $D_r$: 
$$
D_r^+ := \Big\{
u \in D_r
  \colon
 u \geq 0 \ \mbox{ a.e.~in } \Omega
 \Big\}.
 $$
 Here we remark that $D_r^+$ is still non-compact in $H^1_0(\Omega)$ and unbounded in $H^2(\Omega)$ (cf.~see (i) of Remark \ref{R:se}). Then the preceding argument still runs as before. Indeed, the nonnegativity of initial data is inherited to solutions of (P).  
 \begin{proposition}\label{P:Dconv}
It holds that
  \begin{enumerate}
   \item $D_r^+$ is convex,
   \item $\mathcal U$ is connected if $D = D_r^+$.
  \end{enumerate}
 \end{proposition}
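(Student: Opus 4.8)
The plan is to establish (i) first, since convexity of $D_r^+$ is the mechanism that forces (ii). For (i), I would take $u,v\in D_r^+$, $\theta\in[0,1]$, set $w:=\theta u+(1-\theta)v$, and check the defining conditions of $D_r^+$. Nonnegativity $w\ge 0$ a.e.\ and membership $w\in H^2(\Omega)\cap H^1_0(\Omega)\cap L^6(\Omega)$ are immediate from linearity of the space, so the only real content is the bound on $\|(\Delta w-w^3+\lam w)_-\|_2$. The key observation is that $t\mapsto t^3$ is convex on $[0,\infty)$, so from $u,v\ge 0$ one gets $w^3\le\theta u^3+(1-\theta)v^3$ a.e., and since $\Delta(\cdot)+\lam(\cdot)$ is linear this yields the pointwise inequality
\[
\Delta w-w^3+\lam w\;\ge\;\theta\,(\Delta u-u^3+\lam u)+(1-\theta)(\Delta v-v^3+\lam v)\quad\text{a.e.\ in }\Omega.
\]
Using that $s\mapsto(s)_-$ is non-increasing (to pass from this inequality to one between negative parts) and convex (to split the convex combination), I obtain
\[
0\le(\Delta w-w^3+\lam w)_-\le\theta\,(\Delta u-u^3+\lam u)_-+(1-\theta)(\Delta v-v^3+\lam v)_-\quad\text{a.e.}
\]
Taking $L^2$-norms and invoking the triangle inequality together with $u,v\in D_r$ then gives $\|(\Delta w-w^3+\lam w)_-\|_2\le\theta\sqrt r+(1-\theta)\sqrt r=\sqrt r$, i.e.\ $w\in D_r^+$.

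For (ii) I would deduce connectedness of the attractor $\mathcal U$ of \eqref{U} from connectedness of the sets $F_\tau$, which in turn requires a \emph{connected} bounded absorbing set. Note first that $0\in B_0\subset D_r^+$. By (i) and $0\in D_r^+$, the star-shaped set $\tilde B_0:=\bigcup_{b\in B_0}\{\theta b:\theta\in[0,1]\}$ is contained in $D_r^+$, is path-connected (hence connected) and bounded in $(D,\d)$, and contains $B_0$, so it is again $D$-absorbing. By the uniqueness of the $(D,\d)$-global attractor, the construction of Theorem \ref{T:GA} carried out with $\tilde B_0$ in place of $B_0$ produces the same set $\mathcal U$, now realized as $\mathcal U=\bigcap_{\tau\ge\tau_1}\overline{F_\tau}$ with $F_\tau:=\bigcup_{t\ge\tau}S_t\tilde B_0$ (the closures being compact since $S_t\tilde B_0\subset B_0$ for large $t$).

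It then remains to show each $F_\tau$ is connected and to conclude. Writing $Y:=S_\tau\tilde B_0$, which is connected as the continuous image of the connected set $\tilde B_0$ under $S_\tau$ (continuous dependence on data, Theorem \ref{T:ex}), and noting $F_\tau=\bigcup_{s\ge0}S_sY$, I would observe that each forward trajectory $\{S_sy:s\ge0\}$ is connected (time-continuity of solutions) and meets $Y$ at $y=S_0y$; a union of connected sets all intersecting the common connected set $Y$ is connected, so $F_\tau$, and hence $\overline{F_\tau}$, is connected. Finally $\{\overline{F_\tau}\}$ is a nested decreasing family of nonempty compact connected sets (it suffices to intersect along a sequence $\tau_n\to\infty$ by monotonicity), whence its intersection $\mathcal U$ is connected by the standard fact that a nested intersection of compact connected sets in a metric space is connected. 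I expect the main obstacle to be precisely this connectedness bookkeeping: the natural argument would use joint continuity of $(t,b)\mapsto S_tb$, which is not explicitly available here, so the factorization through $Y$ and the gluing of trajectories along $Y$ — relying only on the separately established continuities in $t$ and in $b$ — is the device that makes the argument go through.
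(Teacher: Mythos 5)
Your proof is correct, and part (ii) takes a genuinely different route from the paper. Part (i) coincides in substance with the paper's argument: both rest on convexity of $s\mapsto s^3$ on $[0,\infty)$ (which is where $u,v\ge 0$ enters), followed by monotonicity and convexity of the negative part; the paper applies this to $x\mapsto (x)_-^2$ and integrates over $\Omega$, while you apply it to $x\mapsto (x)_-$ and finish with Minkowski's inequality — an immaterial difference. For part (ii), the paper uses the $H^2\cap L^6$-bound on $B_0$ from the proof of Lemma \ref{L:B0} to embed $B_0$ into $B_1=\{u\in D\colon \|u\|_{H^2(\Omega)}\le R\}$, which is \emph{convex} by (i), hence connected, and compact; it then writes $\mathcal U=\bigcap_{\tau}\overline{E_\tau}$ with $E_\tau=\bigcup_{t\ge\tau}S_tB_1$, connects the union by observing that all the connected sets $S_tB_1$ share the stationary point $0$ (so no information about time-regularity of trajectories is needed), and concludes with Lemma \ref{L:connected}. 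You instead use only star-shapedness of $D_r^+$ about $0$ — a weaker consequence of (i) — to build the connected absorbing set $\tilde B_0$, and you connect $F_\tau=\bigcup_{s\ge 0}S_sY$ by gluing individual forward trajectories along $Y=S_\tau\tilde B_0$, each trajectory being connected thanks to $u\in C([0,\infty);H^1_0(\Omega)\cap L^4(\Omega))$ for data in $D_r$ (Theorem \ref{T:ex}(ii)). The paper's common-fixed-point device is slightly cheaper; yours needs trajectory continuity but dispenses with the auxiliary $H^2$-ball, and both correctly avoid joint continuity of $(t,b)\mapsto S_tb$. One step you should make explicit: identifying $\bigcap_{\tau}\overline{F_\tau}$ (built from $\tilde B_0$) with $\mathcal U$ ``by uniqueness'' presupposes that this intersection is itself a $(D,\d)$-global attractor, i.e., that the proof of Theorem \ref{T:GA} runs with $\tilde B_0$ in place of $B_0$. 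This does work, since $\tilde B_0$ is bounded and is absorbed into the compact set $B_0$ (so compactness, attraction and strict invariance go through with minor adjustments); alternatively, one can verify the equality directly from the two inclusions $F^{B_0}_\tau\subset F^{\tilde B_0}_\tau$ and $F^{\tilde B_0}_\tau\subset F^{B_0}_{\tau-\tau_1}$ for $\tau\ge\tau_1$. As written, this point is an appeal rather than a verification, but it is not a gap in the approach.
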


 \begin{proof}
  We first prove (i). Let $u,v \in D_r^+$ and $\theta \in (0,1)$ and set $u_\theta := (1-\theta)u+\theta v$. Note that
  \begin{align*}
  \Delta u_\theta + \lam u_\theta - u_\theta^3
   &= (1-\theta) \left[ \Delta u + \lam u \right] + \theta \left[ \Delta v + \lam v \right] - \left( (1-\theta)u+\theta v \right)^3\\
   &\geq (1-\theta) \left[ \Delta u - u^3 + \lam u \right] + \theta \left[ \Delta v - v^3 + \lam v \right]
  \end{align*}
  by the convexity of the cubic function $x^3$ on $[0,\infty)$. Hence the decrease as well as the convexity of the function $x \mapsto (x)_-^2$ lead us to observe that
  \begin{align*}
   \left( \Delta u_\theta - u_\theta^3 + \lam u_\theta \right)_-^2
   &\leq \left( (1-\theta)\left[\Delta u - u^3 + \lam u \right] + \theta\left[\Delta v - v^3 + \lam v\right] \right)_-^2\\
   &\leq (1-\theta) \left( \Delta u - u^3 + \lam u \right)_-^2 + \theta \left( \Delta v - v^3 + \lam v \right)_-^2.
  \end{align*}
  Thus integrating both sides over $\Omega$ and recalling the fact that $u,v \in D_r^+$, we obtain
  $$
  C(\Delta u_\theta - u_\theta^3 + \lam u_\theta) \leq r,
  $$
  which implies $u_\theta \in D_r^+$. Therefore $D_r^+$ is convex.

  We next prove (ii).  In the proof of Lemma \ref{L:B0}, we have shown that $B_0$ is bounded in $\HzweiLsechs$. Hence one can take $R > 0$ such that
  $$
  B_0 \subset B_1 := \left\{ u \in D \colon \|u\|_{H^2(\Omega)} \leq R \right\}.
  $$
  Then since $D = D_r^+$ is convex, so is $B_1$, and hence, $B_1$ is connected in $(X,\d)$. Moreover, we can verify that $B_1$ is compact in $(D,\d)$. Furthermore, by Lemma \ref{L:absorbing}, we can take $t_0 > 0$ such that $S_{t} B_1 \subset B_0$ for all $t \geq t_0$. Hence $S_{t} B_1 \subset B_0 \subset B_1$ for all $t \geq t_0$, and therefore, it holds that
  $$
  \mathcal U = \bigcap_{\tau \geq \tau_0} \overline{E_\tau}, \quad E_\tau := \bigcup_{t \geq \tau} S_t B_1.
  $$
 Moreover, due to the continuity of $S_t$ in $(D,\d)$, the set $S_t B_1$ is also connected for each $t \geq 0$. Furthermore, since the family $\{S_t B_1\}_{t \geq 0}$ has a nonempty intersection (indeed, every stationary point in $B_1$ (e.g., $0 \in B_1$) belongs to the intersection), the union $E_\tau = \cup_{t \geq \tau} S_t B_1$ is connected as well. Therefore the closure $\overline{E_\tau}$ is also connected. Finally, Lemma \ref{L:connected} below ensures the connectedness of $\mathcal U = \cap_{\tau \geq \tau_0} \overline{E_\tau}$, since $\overline{E_\tau}$ is included in the compact set $B_1$ for $\tau \geq t_0$.
 \end{proof}

\begin{lemma}[see e.g. {\cite[p.437]{Engelking}}]\label{L:connected}
 Let $X$ be a compact Hausdorff space. Let $\mathcal P$ be a family of nonempty, closed and connected subsets of $X$ such that either $A \subset B$ or $B \subset A$ holds true for any $A,B \in \mathcal P$. Then the intersection
 $$
 \bigcap \,\mathcal P := \bigcap_{A \in \mathcal P} A 
 $$
 is also connected.
\end{lemma}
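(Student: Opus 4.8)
The plan is to argue by contradiction, exploiting the two standing hypotheses that $X$ is compact and Hausdorff (hence normal) and that $\mathcal P$ is totally ordered by inclusion. Write $C := \bigcap \mathcal P$. First I would record that $C$ is nonempty: since $\mathcal P$ is a chain, any finite subfamily $A_1,\dots,A_n \in \mathcal P$ has a smallest member with respect to inclusion, so $A_1 \cap \cdots \cap A_n$ equals that member and is nonempty; thus $\mathcal P$ has the finite intersection property, and compactness of $X$ forces $C \neq \emptyset$. Note also that $C$ is closed, being an intersection of closed sets, so ``disconnected'' will mean separated by relatively closed (equivalently, closed in $X$) pieces.

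Next, suppose toward a contradiction that $C$ is disconnected. Then there are nonempty sets $C_1, C_2$, relatively closed in $C$ and hence closed in $X$, with $C = C_1 \cup C_2$ and $C_1 \cap C_2 = \emptyset$. Since $X$ is compact Hausdorff, it is normal, so I can separate $C_1$ and $C_2$ by disjoint open sets $U_1 \supset C_1$ and $U_2 \supset C_2$. Put $U := U_1 \cup U_2$; then $C \subset U$.

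The crux is then a compactness argument applied to the nested family of closed sets $K_A := A \setminus U$, $A \in \mathcal P$. Since $\mathcal P$ is a chain so is $\{K_A\}$, and
$$
\bigcap_{A \in \mathcal P} K_A = \Big( \bigcap_{A \in \mathcal P} A \Big) \setminus U = C \setminus U = \emptyset.
$$
By the finite intersection property (equivalently, compactness), a nested family of closed subsets of $X$ with empty total intersection must already contain a member that is empty; hence $K_{A_0} = A_0 \setminus U = \emptyset$ for some $A_0 \in \mathcal P$, i.e.\ $A_0 \subset U = U_1 \cup U_2$.

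Finally I would invoke the connectedness of $A_0$: since $U_1, U_2$ are disjoint open sets covering the connected set $A_0$, one of the traces $A_0 \cap U_1$, $A_0 \cap U_2$ must be empty, say $A_0 \subset U_1$. But $C \subset A_0$, so $C_2 \subset C \subset U_1$, while $C_2 \subset U_2$ together with $U_1 \cap U_2 = \emptyset$ forces $C_2 = \emptyset$, contradicting $C_2 \neq \emptyset$; hence $C$ is connected. I expect the main obstacle to be precisely the reduction in the third paragraph — passing from ``the whole nested intersection is empty'' to ``one member is already empty'' — which is exactly where compactness and the chain hypothesis on $\mathcal P$ are both indispensable, the normality used in the separation step being the other place where the compact Hausdorff assumption is genuinely needed.
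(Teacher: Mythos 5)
Your proof is correct: the chain hypothesis gives the finite intersection property (which yields both $C \neq \emptyset$ and, in the key step, the passage from $\bigcap_{A} \left( A \setminus (U_1 \cup U_2) \right) = \emptyset$ to the emptiness of a single member $A_0 \setminus (U_1 \cup U_2)$), normality of the compact Hausdorff space supplies the separating open sets, and connectedness of $A_0$ forces $A_0 \subset U_1$ or $A_0 \subset U_2$, contradicting that both $C_1$ and $C_2$ are nonempty. The paper gives no proof of its own---it cites Engelking for this classical fact---and your argument is exactly the standard proof of that cited result, so it matches the intended approach.
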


\section{Convergence to equilibria}\label{S:conv}

We next discuss the convergence of each solution $u = u(x,t)$ for (P) as $t$ goes to $\infty$. We shall prove the $\omega$-limit set is non-empty and a singleton. Moreover, the limit is characterized as a solution of an elliptic variational inequality of obstacle type.

  \begin{theorem}\label{T:conv}
   Let $u_0 \in \overline{D_r}^{L^2}$ with an arbitrary $r > 0$ and let $u$ be the solution of {\rm (P)} as well as \eqref{pde2}--\eqref{ic2} {\rm (}see Theorem \ref{T:reform}{\rm )}. Then it holds that
   \begin{align*}
    u(t) \to \phi \quad &\mbox{ strongly in } H^1_0(\Omega) \cap L^4(\Omega),\\
    &\mbox{ weakly in } H^2(\Omega) \cap L^6(\Omega) \ \mbox{ as } \ t \to \infty
\end{align*}
   for some $\phi \in H^2(\Omega) \cap H^1_0(\Omega) \cap L^6(\Omega)$. Hence the $\omega$-limit set $\omega(u)$ of $u$ is non-empty and a singleton. Moreover, the limit $\phi$ is a solution of the following elliptic variational inequality of obstacle type\/{\rm :}
   \begin{equation}\label{st}
    \partial \iII(\phi) - \Delta \phi + \phi^3 \ni \lam \phi \ \mbox{ in } L^2(\Omega),  \quad \phi \in H^1_0(\Omega), 
   \end{equation}
   which is rewritten as
    \begin{align*}
  \phi \geq u_0, \quad - \Delta \phi + \phi^3 - \lam \phi \geq 0 \quad \mbox{ in } \ \Omega,\\
  \left( \phi - u_0 \right)\left( - \Delta \phi + \phi^3 - \lam \phi \right) = 0 \quad \mbox{ in } \ \Omega,\\
  \phi|_{\partial \Omega} = 0.
 \end{align*}
  \end{theorem}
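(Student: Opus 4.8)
The plan is to combine the partial energy-dissipation estimates of Section \ref{S:e} (which furnish compactness of the whole trajectory) with the \emph{time-monotonicity} of $u$ (which forces the $\omega$-limit set to be a single point, with no {\L}ojasiewicz--Simon type argument needed), and finally to pass to the limit in the obstacle reformulation \eqref{pde2} in order to characterize $\phi$. Throughout I would invoke the smoothing effect of Theorem \ref{T:ex}(i) to fix a time $t_0 > 0$ and treat $u(t_0) \in D_r \subset H^2(\Omega) \cap H^1_0(\Omega) \cap L^6(\Omega)$ as a new, regular initial datum; this is what lets the finite-energy estimates be used even though $u_0$ lies only in $\overline{D_r}^{L^2}$.

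First I would establish a uniform-in-time bound on the trajectory. Applying \eqref{diss2} from the time $t_0$ gives $\sup_{t \geq t_0} \|\Delta u(t) - u^3(t) + \lam u(t)\|_2 < \infty$; then, via the inequality $\|\Delta v\|_2^2 + \|v^3\|_2^2 \leq \|{-\Delta v} + v^3\|_2^2$ and elliptic regularity exactly as in the proof of Lemma \ref{L:B0}, one obtains $\sup_{t \geq t_0}\left(\|u(t)\|_{H^2(\Omega)} + \|u(t)\|_6\right) < \infty$, and in particular a uniform $L^2$-bound. Next I would exploit monotonicity: since $u_t = (\Delta u - u^3 + \lam u)_+ \geq 0$, for a.e.~$x$ the map $t \mapsto u(x,t)$ is non-decreasing and hence converges to some $\phi(x) \in (-\infty, +\infty]$. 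Because $(u(t) - u(t_0))^2$ is non-decreasing in $t$, the monotone convergence theorem with the uniform $L^2$-bound yields $\phi - u(t_0) \in L^2(\Omega)$, so $\phi \in L^2(\Omega)$; dominated convergence (the integrand $(\phi - u(t))^2$ being non-increasing and dominated by $(\phi - u(t_0))^2$) then gives $u(t) \to \phi$ strongly in $L^2(\Omega)$. This $L^2$-limit is automatically unique, so no subsequence extraction is involved. Combining the uniform $H^2 \cap L^6$-bound with the compact embedding $H^2(\Omega) \cap L^6(\Omega) \hookrightarrow H^1_0(\Omega) \cap L^4(\Omega)$ used in Lemma \ref{L:B0}, every sequence $u(t_n)$ has a subsequence converging strongly in $H^1_0 \cap L^4$, necessarily to $\phi$; hence $u(t) \to \phi$ strongly in $H^1_0 \cap L^4$ and, by boundedness and uniqueness of the weak limit point, weakly in $H^2 \cap L^6$, with $\phi \in H^2(\Omega) \cap H^1_0(\Omega) \cap L^6(\Omega)$. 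In particular $\omega(u) = \{\phi\}$.

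It remains to identify $\phi$ via \eqref{st}. From \eqref{e1}, the lower boundedness of $E$, and $E(u(t_0)) < \infty$, one has $\int_{t_0}^\infty \|u_t\|_2^2 \, \d t < \infty$, so I may select $t_n \to \infty$ with $u_t(t_n) \to 0$ strongly in $L^2(\Omega)$. Using \eqref{pde2} I write $\eta(t_n) = \Delta u(t_n) - u(t_n)^3 + \lam u(t_n) - u_t(t_n)$ with $\eta(t_n) \in \partial \iII(u(t_n))$; the convergences above give $\Delta u(t_n) \rightharpoonup \Delta \phi$ and $u(t_n)^3 \rightharpoonup \phi^3$ weakly in $L^2$ (the latter since $u(t_n) \to \phi$ strongly in $L^4$ while remaining bounded in $L^6$), whence $\eta(t_n) \rightharpoonup \eta_\infty := \Delta \phi - \phi^3 + \lam \phi$ weakly in $L^2$. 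As $\partial \iII$ is maximal monotone in $L^2(\Omega)$ and $u(t_n) \to \phi$ strongly, the product convergence $(\eta(t_n), u(t_n)) \to (\eta_\infty, \phi)$ holds and the demiclosedness of the graph gives $\eta_\infty \in \partial \iII(\phi)$, i.e.\ $\partial \iII(\phi) - \Delta \phi + \phi^3 \ni \lam \phi$, which is \eqref{st}. The pointwise description of $\partial \iII$ recorded in the Notation then rewrites this as the complementarity system $\phi \geq u_0$, $-\Delta \phi + \phi^3 - \lam \phi \geq 0$, $(\phi - u_0)(-\Delta \phi + \phi^3 - \lam \phi) = 0$ with $\phi|_{\partial \Omega} = 0$.

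The main obstacle is the passage to the limit in the subdifferential inclusion: one must ensure both that $\eta(t_n)$ converges weakly to exactly $\Delta \phi - \phi^3 + \lam \phi$ and that this element belongs to $\partial \iII(\phi)$. This is precisely where the strong $H^1_0 \cap L^4$ convergence of $u(t_n)$ — itself a byproduct of the time-monotonicity together with the uniform $H^2 \cap L^6$-bound — is indispensable, since it simultaneously identifies the weak limit $\phi^3$ of the nonlinearity and supplies the product convergence needed to invoke maximal monotonicity. A secondary, more technical point is the systematic use of the smoothing effect to start \eqref{diss2} and \eqref{e1} from the positive time $t_0$, so that all the finite-energy estimates remain available for merely $L^2$-closure data $u_0 \in \overline{D_r}^{L^2}$.
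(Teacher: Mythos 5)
Your proposal is correct, and it reaches the conclusion by a genuinely different mechanism than the paper for the convergence part, while the identification of $\phi$ via demiclosedness of $\partial \iII$ coincides with the paper's final step. The paper first extracts a sequence $\tau_n \in [n,n+1]$ with $u_t(\tau_n) \to 0$, obtains weak $H^1_0 \cap L^4$ limits, upgrades to strong convergence by computing $\limsup_n \|\nabla u(\tau_n)\|_2^2 \leq \|\nabla \phi\|_2^2$ (and similarly in $L^4$) from the equation and invoking uniform convexity, then transfers convergence from $(\tau_n)$ to arbitrary $t_n \to \infty$ through the H\"older-in-time estimate \eqref{ut-s} together with the monotone convergence of $E(u(t))$; pointwise monotonicity of $u$ enters only at the very end, to conclude $\omega(u)$ is a singleton. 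You instead put the time-monotonicity first: monotone plus dominated convergence give strong $L^2$ convergence of the \emph{whole} trajectory with no subsequence extraction, and the uniform $H^2 \cap L^6$ bound (from \eqref{diss2}, exactly the bound underlying the paper's Corollary \ref{C:en-diss}) combined with the compact embedding of Lemma \ref{L:B0} upgrades this to strong $H^1_0 \cap L^4$ and weak $H^2 \cap L^6$ convergence. Your route is more elementary — it dispenses with the uniform-convexity and energy-identity computations and with \eqref{ut-s} — whereas the paper's argument is more robust (it would survive in settings without pointwise monotonicity) and additionally identifies $E_\infty = E(\phi)$.

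One point you should make explicit: your restart at $t_0$ (running \eqref{e1} and \eqref{diss2} from the datum $u(t_0) \in D_r$) requires knowing that the restriction of $u$ to $[t_0,\infty)$ coincides with the \emph{selected} solution emanating from $u(t_0)$, i.e., that this restriction also solves the obstacle problem with the shifted obstacle $u(t_0)$; this follows from monotonicity (if $\eta(x,t) < 0$ then $u(x,t) = u_0(x) \leq u(x,t_0) \leq u(x,t)$, forcing $u(x,t) = u(x,t_0)$) together with the uniqueness assertion of Theorem \ref{T:reform}, and it is precisely the mechanism the paper makes rigorous in Appendix \ref{S:Aex} (proof of (iv) of Theorem \ref{A:T:ex}). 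Alternatively, you could avoid the restart altogether for the uniform bound by dividing \eqref{e6-3} — which Theorem \ref{A:T:ex}(i) grants for mere $L^2$-closure data — by $T^2$. Either way the estimate you need is available, so this is a presentational gap rather than a mathematical one.
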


%
%
 
   \begin{proof}
    Even though the uniqueness of solutions to (P) is not guaranteed, the solution of (P) as well as of \eqref{pde2}--\eqref{ic2} is unique (see Theorem \ref{T:reform}). Hence all energy inequalities are valid (see Appendix \S \ref{S:Aex} and \S \ref{A:reform}). The following proof is based on a strategy used in~\cite{AK09}.
 Recall \eqref{e1} and the boundedness of $E(\cdot)$ from below. Then $E(u(t))$ decreasingly converges to a number $E_\infty$ as $t \to \infty$. Moreover, by \eqref{e1-1}, there is a sequence $\tau_n \in [n,n+1]$ such that
 $$
 u_t(\tau_n) \to 0 \quad \mbox{ strongly in } L^2(\Omega).
 $$
 Furthermore, since $u(t)$ is bounded in $H^1_0(\Omega) \cap L^4(\Omega)$ for $t \geq 0$, up to a (not relabeled) subsequence, there exists $\phi \in H^1_0(\Omega) \cap L^4(\Omega)$ such that
 \begin{alignat*}{4}
  u(\tau_n) &\to \phi \quad  &&\mbox{ weakly in } H^1_0(\Omega) \cap L^4(\Omega) \mbox{ and strongly in } L^2(\Omega).
 \end{alignat*}
   We also further derive from \eqref{e3} (with $s$ and $\|\eta(s)\|_2^2$ replaced by $0$ and $r$, respectively) and \eqref{e6-1} along with \eqref{A} that
 \begin{alignat*}{4}
  \eta(\tau_n) &\to \eta_\infty \quad &&\mbox{ weakly in } L^2(\Omega),\\
  - \Delta u(\tau_n) + u(\tau_n)^3 &\to - \Delta \phi + \phi^3 \quad &&\mbox{ weakly in } L^2(\Omega),
 \end{alignat*}
 which along with the demiclosedness of $\partial \iI$ gives $\eta_\infty \in \partial \iI(0)$. Therefore we assure that
 \begin{equation}\label{st-0}
  \eta_\infty - \Delta \phi + \phi^3 - \lam \phi = 0, \quad \eta_\infty \in \partial \iI(0),
 \end{equation}
   which is a necessary condition for \eqref{st}. Noting that
 \begin{align*}
  \limsup_{n \to \infty} \|\nabla u(\tau_n)\|_2^2
  &= \lam  \lim_{n \to \infty}  \|u(\tau_n)\|_2^2 - \liminf_{n \to \infty} \|u(\tau_n)\|_4^4 - \lim_{n \to \infty} (\eta(\tau_n), u(\tau_n))\\
  &\quad - \lim_{n \to \infty} (u_t(\tau_n),u(\tau_n))\\
  &\leq \lam \|\phi\|_2^2 - \|\phi\|_4^4 - (\eta_\infty,\phi) \stackrel{\eqref{st-0}}{=} \|\nabla \phi\|_2^2
 \end{align*}
   and also deriving in a similar way that
   $$
   \limsup_{n \to \infty} \|u(\tau_n)\|_4^4
   \leq \|\phi\|_4^4,
   $$
 we deduce by the uniform convexity of $H^1_0(\Omega)$ and $L^4(\Omega)$ that
 $$
 u(\tau_n) \to \phi \quad \mbox{ strongly in } H^1_0(\Omega) \cap L^4(\Omega).
 $$
 Thus one can also identify the limit $E_\infty = E(\phi)$.

 It follows from \eqref{e1-1} that
 \begin{align}
  \|u(t) - u(s)\|_2 &\leq \left( \int^t_s \|u_\tau(\tau)\|_2^2 \, \d \tau \right)^{1/2} \sqrt{t-s}\nonumber\\
  &\leq \left(\int^\infty_s \|u_\tau (\tau)\|_2^2 \, \d \tau \right)^{1/2} \sqrt{t-s} \quad \mbox{ for } \ 0 \leq s \leq t < \infty.\label{ut-s}
 \end{align}
 Let $t_n \to \infty$ be an increasing sequence. Then one can take a (not relabeled) subsequence of $(\tau_n)$ such that $0 \leq t_n - \tau_n \leq 1$. Therefore, putting $t = t_n$ and $s = \tau_n$ to \eqref{ut-s},
 $$
 u(t_n) \to \phi \quad \mbox{ strongly in } L^2(\Omega),
 $$
 which along with \eqref{e1-1} implies, up to a (not relabeled) subsequence,
 $$
 u(t_n) \to \phi \quad \mbox{ weakly in } H^1_0(\Omega) \cap L^4(\Omega).
 $$
 
 Therefore combining all these facts, we observe that
 \begin{align*}
  \dfrac 1 2 \limsup_{n \to \infty} \|\nabla u(t_n)\|_2^2
  &=  \lim_{n \to \infty} E(u(t_n)) - \dfrac 1 4  \liminf_{n \to \infty} \|u(t_n)\|_4^4 + \dfrac 1  2  \lam  \lim_{n \to \infty} \|u(t_n)\|_2^2\\
  &\leq E(\phi) - \dfrac 1 4 \|\phi\|_4^4 + \dfrac 1 2 \lam \|\phi\|_2^2 = \dfrac 1 2 \|\nabla \phi\|_2^2,
 \end{align*}
 which together with the uniform convexity of $H^1_0(\Omega)$ ensures that
 $$
 u(t_n) \to \phi \quad \mbox{ strongly in } H^1_0(\Omega).
 $$
Similarly, one can prove that $u(t_n) \to \phi$ strongly in $L^4(\Omega)$. Consequently, $\phi$ is an element of the $\omega$-limit set $\omega(u)$ of $u$.    Furthermore, from the non-decrease of $t \mapsto u(x,t)$ for a.e.~$x \in \Omega$, we conclude that
 $$
 u(x,t) \nearrow \phi(x) \quad \mbox{ for a.e. } \ x \in \Omega
 \ \mbox{ as } \ t \to \infty.
 $$
 Hence $\omega(u) = \{\phi\}$. 

   Now, recall that $u$ also solves \eqref{pde2}--\eqref{ic2} and $\eta$ belongs to $\partial \iII(u)$. By the demiclosedness of $\partial \iII$ in $L^2(\Omega) \times L^2(\Omega)$, we conclude that $\eta_\infty$ is a section of $\partial \iII(\phi)$ a.e.~in $\Omega$. Thus $\phi$ turns out to be a solution of \eqref{st}. This completes the proof.
   \end{proof}

\begin{remark}
 {\rm
 \begin{enumerate}
  \item To prove that the $\omega$-limit set is a singleton, \L ojasiewicz-Simon type inequalities are often used. However, it seems difficult to apply them to (P), since \eqref{pde} is not a gradient flow but a generalized one, which can be written in the form,
 $$
 u_t + \partial \iI(u_t) \ni - E'(u), 
 $$
	where $E'$ stands for a functional derivative of $E$ (i.e., Fr\'echet derivative). 
	On the other hand,  this point was proved more easily since solutions of (P) are non-decreasing in time.

  \item As for the parabolic obstacle problem \eqref{pde2}--\eqref{ic2}, it also seems difficult to apply a \L ojasiewicz-Simon type inequality due to the presence of the nonsmooth potential $\iII$; however, by reducing the obstacle problem to (P), one can prove that the $\omega$-limit set of each solution for the obstacle problem is a singleton and consists of a single solution to \eqref{st}.

\end{enumerate}
   }
  \end{remark}

  In Theorem \ref{T:conv}, the rate of convergence is not estimated. Under a suitable assumption on initial data, by employing \eqref{e2}, one can verify an exponential convergence of $u(t)$ as $t \to \infty$.
  \begin{corollary}\label{C:exp-conv}
   In addition to the same assumptions as in Theorem \ref{T:conv}, suppose that
   \begin{equation}\label{a:exp-conv}
   u_0 \geq 0 \quad \mbox{ and } \quad \lambda_\Omega (3 u_0^2) > \lam.
   \end{equation}
   Set $\sigma := \lambda_\Omega(3u_0^2) - \lam  > 0$ and $C = \|(\Delta u_0 - u_0^3 + \lam u_0)_+\|_2$. Then it holds that
   $$
   \|u(t) - \phi\|_2 \leq \dfrac{C}{\sigma} e^{-\sigma t}  \quad \mbox{ for all } \ t \geq 0.
   $$
  \end{corollary}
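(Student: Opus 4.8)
The plan is to combine the exponential decay of $u_t$ furnished by Energy Inequality 2 with the convergence $u(t)\to\phi$ established in Theorem \ref{T:conv}, via a straightforward integration of the time derivative. First I would invoke \eqref{e2}: under the additional hypotheses \eqref{a:exp-conv} we have $\lambda_\Omega(3u_0^2)>\lam$, so \eqref{e2} applies, and taking square roots gives
\begin{equation*}
\|u_t(t)\|_2 \leq \|v_0\|_2\, e^{-\sigma t} = C\, e^{-\sigma t} \quad \mbox{ for all } \ t>0,
\end{equation*}
where I use that $v_0$ corresponds to $(\Delta u_0 - u_0^3 + \lam u_0)_+$, whence $\|v_0\|_2 = C$, and that $\sigma = \lambda_\Omega(3u_0^2)-\lam>0$.

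Next I would exploit the regularity $u \in W^{1,2}(\delta,T;L^2(\Omega))$ for every $0<\delta<T$ (see (i) of Definition \ref{D:sol}), which makes $t\mapsto u(t)$ absolutely continuous as an $L^2(\Omega)$-valued map on $[\delta,\infty)$. Hence for $0<t\leq s$ the representation $u(s)-u(t) = \int_t^s u_\tau(\tau)\,\d\tau$ holds in $L^2(\Omega)$, and the triangle inequality for the Bochner integral together with the decay bound above yields
\begin{equation*}
\|u(s)-u(t)\|_2 \leq \int_t^s \|u_\tau(\tau)\|_2\,\d\tau \leq C\int_t^s e^{-\sigma\tau}\,\d\tau = \frac{C}{\sigma}\left(e^{-\sigma t}-e^{-\sigma s}\right).
\end{equation*}

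Then I would let $s\to\infty$: by Theorem \ref{T:conv} we have $u(s)\to\phi$ strongly in $L^2(\Omega)$, so passing to the limit in the displayed inequality gives $\|\phi-u(t)\|_2 \leq (C/\sigma)\,e^{-\sigma t}$ for every $t>0$. Finally, since $u\in C([0,\infty);L^2(\Omega))$, letting $t\to 0_+$ extends the estimate to $t=0$, which completes the argument. (The monotonicity $u(x,t)\nearrow\phi(x)$ from Theorem \ref{T:conv} makes the integral representation $\phi-u(t)=\int_t^\infty u_\tau\,\d\tau$ transparent, as $u_\tau\geq 0$.)

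As for the main obstacle, the substantive content is already packaged in \eqref{e2} and in Theorem \ref{T:conv}; granting these, the remainder is a routine telescoping of $u_t$. The only points demanding care are the rigorous status of \eqref{e2} — whose derivation in Energy Inequality 2 is formal, relying on differentiating \eqref{pde} in time and on the sign condition $u_0\geq 0$ to bound the Schr\"odinger quadratic form by its principal eigenvalue $\lambda_\Omega(3u_0^2)$ — and the implicit requirement that $u_0$ be regular enough (e.g.~$u_0\in D_r$) for both $C$ and $\lambda_\Omega(3u_0^2)$ to be well defined and for \eqref{e2} to persist along the approximation used to construct $u$.
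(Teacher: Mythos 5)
Your proposal is correct and follows essentially the same route as the paper: both proofs take the square root of the decay estimate \eqref{e2}, integrate $\|u_\tau\|_2$ over $[t,s]$ to get $\|u(s)-u(t)\|_2 \leq \frac{C}{\sigma}\left(e^{-\sigma t}-e^{-\sigma s}\right)$, and then send $s\to\infty$ using the strong $L^2$-convergence $u(s)\to\phi$ from Theorem \ref{T:conv}. Your explicit limit $t\to 0_+$ via $u\in C([0,\infty);L^2(\Omega))$ and your closing caveats about the rigorous status of \eqref{e2} (settled in (iv) of Theorem \ref{A:T:ex} for sufficiently regular $u_0$) are minor refinements of details the paper leaves implicit.
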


  \begin{proof}
   By Theorem \ref{T:conv}, it is already known that $u(t)$ converges to some equilibrium $\phi$ strongly in $H^1_0(\Omega) \cap L^4(\Omega)$ as $t \to \infty$. Moreover, setting  $\sigma := \lambda_\Omega(3u_0^2) - \lam > 0$  and letting $s_0 > 0$, we observe that
   \begin{align*}
    \|u(t) - u(s)\|_2 &\leq \int^t_s \|\partial_\tau u(\tau)\|_2 \, \d \tau\\
    &\stackrel{\eqref{e2}}{\leq} C \int^t_s e^{- \sigma \tau} \, \d \tau
    \leq \dfrac C \sigma \left( e^{-\sigma s} - e^{-\sigma t} \right)
    \quad \mbox{ for } \ s_0 \leq s \leq t < \infty
   \end{align*}
   for some constant $C \geq 0$. Letting $t \to \infty$, we deduce that
   $$
   \|\phi - u(s)\|_2 \leq \dfrac C \sigma e^{-\sigma s} \quad \mbox{ for all } \ s \geq s_0.
   $$
   This completes the proof.
  \end{proof}

\begin{remark}[On assumption \eqref{a:exp-conv}]\label{R:exp-conv}
 {\rm
 Note that $\lambda_\Omega(3u_0^2) > \mu(\Omega) > 0$ by $u_0 \not\equiv 0$, where $\mu(\Omega)$ stands for the first eigenvalue of the Dirichlet Laplacian $-\Delta$ posed in $\Omega$. Hence, the second inequality of \eqref{a:exp-conv} holds true if $\mu(\Omega) \geq \lam$ (e.g., the diameter of $\Omega$ is small enough). On the other hand, even if $\mu(\Omega) < \lam $, the second condition of \eqref{a:exp-conv} is also satisfied under an appropriate assumption on the initial datum $u_0$, for instance,
 $$
 3u_0^2 \geq U_\lambda \quad \mbox{ a.e.~in } \Omega,
 $$
 where $U_\lambda = U_\lambda(x)$ is the (unique) positive solution of the elliptic equation for any $\lambda > \lam$,
 \begin{equation}\label{eAC-2}
 - \Delta U_\lambda + U_\lambda^2 = \lambda U_\lambda, \ U_\lambda > 0 \  \mbox{ in } \Omega, \quad U_\lambda = 0 \ \mbox{ on } \partial \Omega.
 \end{equation}
 Indeed, for each $\lambda > \lam$ (hence $\lambda > \mu(\Omega)$), \eqref{eAC-2} admits the unique positive solution $U_\lambda \in C^2(\Omega) \cap C(\overline \Omega)$ such that $0 < U_\lambda \leq \lambda$ in $\Omega$, and moreover, $(\lambda , U_\lambda)$ turns out to be a principal eigenpair of the Schr\"odinger operator $v \mapsto -\Delta v + U_\lambda v$. Hence if $3u_0^2 \geq U_\lambda$ a.e.~in $\Omega$, then $\lambda_\Omega(3u_0^2) \geq \lambda_\Omega(U_\lambda) = \lambda > \lam$.
 }
\end{remark}

\appendix

\section{Proof of the existence part of Theorem {\ref{T:ex}} and derivation of energy inequalities}\label{S:Aex}

In this section, we give a proof for the existence part of Theorem \ref{T:ex} and a rigorous derivation of energy inequalities, which are derived in \S \ref{S:e} by formal computations. More precisely, we shall prove
\begin{theorem}\label{A:T:ex}
 Let $r > 0$ be arbitrarily fixed.
 \begin{enumerate}
  \item[(i)] Let $u_0$ belong to the closure $\overline{D_r}^{L^2}$ of $D_r$ in $L^2(\Omega)$. Then {\rm (P)} admits the unique solution  $u = u(x,t)$  satisfying  all the regularity conditions as in {\rm (i)} of Theorem \ref{T:ex} 
 such that \eqref{e1}, \eqref{e1-2}, \eqref{e4}--\eqref{e4-1}, \eqref{e5}, \eqref{e5-2} and \eqref{e6-3} hold true with $\|\eta_0\|_2^2$ replaced by $r$. Moreover, it is also satisfied that
\begin{equation}\label{eta-ini}
\|\eta(t)\|_2^2 \leq r \quad \mbox{ for a.e. } \ t > 0.
\end{equation}
  \item[(ii)] If $u_0$ also belongs to the closure $\overline{D_r}^{H^1_0 \cap L^4}$ of $D_r$ in $H^1_0(\Omega) \cap L^4(\Omega)$, then the solution  $u = u(x,t)$ also satisfies all the regularity conditions as in {\rm (ii)} of Theorem \ref{T:ex}.  Moreover, \eqref{e1}--\eqref{e1-2}, \eqref{e4}--\eqref{e5-2}, \eqref{e6-1}--\eqref{diss2} and \eqref{eta-ini} are satisfied with $\|\eta_0\|_2^2$ replaced by $r$.
  \item[(iii)] If $u_0 \in H^2(\Omega) \cap L^6(\Omega)$, then  the solution $u = u(x,t)$ also fulfills all the regularity conditions as in {\rm (iii)} of Theorem \ref{T:ex}.  Moreover, \eqref{e1}--\eqref{e1-2}, \eqref{e4}--\eqref{e5-2}, \eqref{e6-int}, \eqref{e6-1}--\eqref{diss2} are satisfied with $\|\eta_0\|_2^2$ replaced by $\|(\Delta u_0 - u_0^3 + \lam u_0)_-\|_2^2$. Furthermore, it holds that
\begin{equation}\label{eta-ini2}
\|\eta(t)\|_2^2 \leq \|(\Delta u_0 - u_0^3 + \lam u_0)_-\|_2^2 \quad \mbox{ for a.e. } \ t > 0.
\end{equation}
  \item[(iv)] If $u = u(x,t)$ also solves the obstacle problem \eqref{pde2}--\eqref{ic2} {\rm (}see Theorem \ref{T:reform}{\rm )}, then \eqref{e3}, \eqref{e6} and \eqref{e6-0} hold. In addition, if $u_0 \in H^2(\Omega) \cap H^1_0(\Omega) \cap L^6(\Omega)$, then \eqref{e2} is also satisfied with $\|v_0\|_2^2$ replaced by $\|(\Delta u_0 - u_0^3 + \lam u_0)_+\|_2^2$.
 \end{enumerate}
\end{theorem}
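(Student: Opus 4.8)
The plan is to construct the solution via the Moreau--Yosida approximation \eqref{P-approx} already set up in Section~\ref{S:e}, to establish every energy inequality \emph{first} at the smooth approximate level --- where differentiation of the equation in time is genuinely legitimate --- and only then to pass to the limit $\lambda\to0_+$. I would organize the four assertions as a hierarchy: prove (iii) for the most regular data $u_0\in D(\partial\psi)=H^2(\Omega)\cap H^1_0(\Omega)\cap L^6(\Omega)$; deduce (ii) and (i) by approximating rougher initial data from within $D_r$ and exploiting the uniform control that membership in $D_r$ provides; and finally obtain (iv) by invoking the obstacle reformulation of Section~\ref{S:ref}.

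For (iii), recall that the unique solution $u_\lambda$ of \eqref{P-approx} is of class $C^{1,1}$ in time and that $\eta_\lambda=\lam u_\lambda-\partial\psi_\lambda(u_\lambda)-\partial_t u_\lambda$ lies in $C^{0,1}([0,T];L^2(\Omega))$. This temporal regularity is precisely what turns the formal computations of Section~\ref{S:e} into rigorous ones: I would reproduce Energy Inequalities~1--6 for $u_\lambda$ by testing the equation, or its time-derivative, against the natural multipliers ($\partial_t u_\lambda$, $u_\lambda$, $\partial\psi_\lambda(u_\lambda)$, $\eta_\lambda$ and $\partial_t(\partial\psi_\lambda(u_\lambda))$). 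Throughout, $\eta_0$ is replaced by $-(\lam u_0-\partial\psi_\lambda(u_0))_-$, whose $L^2$-norm I would control uniformly in $\lambda$ in terms of $\|(\Delta u_0-u_0^3+\lam u_0)_-\|_2$ by means of the resolvent estimates for $\partial\psi$ and the nonexpansiveness of $J_\lambda$. The resulting $\lambda$-uniform bounds, combined with Aubin--Lions compactness (to pass to the limit in the cubic term $u_\lambda^3$ through strong $L^2$-convergence) and the demiclosedness of the maximal monotone operators $\partial\iI$ and $\partial\psi$, produce an $L^2$-solution; weak and strong lower semicontinuity then transfer each inequality to the limit.

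For (ii) and (i), I would approximate $u_0$ by a sequence $u_{0,n}\in D_r$ converging strongly in $H^1_0(\Omega)\cap L^4(\Omega)$, respectively only in $L^2(\Omega)$. Since every $u_{0,n}$ lies in $D_r$, the quantity $\|(\Delta u_{0,n}-u_{0,n}^3+\lam u_{0,n})_-\|_2^2$ is bounded by $r$ uniformly in $n$; this is exactly what allows $\|\eta_0\|_2^2$ to be replaced by $r$ in all the energy inequalities and, together with the approximate-level decay of $\|\eta\|_2$, yields \eqref{eta-ini} and hence $u(t)\in D_r$ for every $t>0$. The genuinely smoothing bounds of (i) --- the time-weighted estimates such as $t^{1/2}u_t\in L^2$ extracted from \eqref{e4-2}, \eqref{e5-2} and \eqref{e6-3} --- survive the merely $L^2$-convergence of the data, because the weights $t^\alpha$ vanish at $t=0$ and so no control of $\partial_t u$ or $\nabla u$ up to $t=0$ is required.

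The main obstacle will be the rigorous treatment, and the limit passage, of the inequalities obtained by differentiating \eqref{pde} in time --- above all the monotone decay \eqref{e3} of $\|\eta(t)\|_2$ underlying \eqref{eta-ini}. After differentiating the approximate equation and testing by $\eta_\lambda$, the term $(\partial_t(\partial\psi_\lambda(u_\lambda)),\eta_\lambda)$ must be shown nonnegative, mirroring the formal sign $(-\Delta v,\eta)\ge0$; here I would use that $v_\lambda=\partial_t u_\lambda\ge0$ (so $\iI(v_\lambda)\equiv0$ and $(\partial_t v_\lambda,\eta_\lambda)=0$), that $v_\lambda\eta_\lambda\equiv0$ annihilates the cubic contribution, and the disjoint-support structure of $v_\lambda$ and $\eta_\lambda$ to control the remaining elliptic part. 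In the limit, \eqref{e3} must be read as monotonicity in $t$ rather than a pointwise identity; I would recover its full form --- and likewise \eqref{e6} and \eqref{e6-0} --- in part (iv) directly from Lemma~\ref{L:eta-dec} and the obstacle reformulation, which furnish $\eta\in\partial\iII(u)$ and the pointwise-in-$t$ monotonicity of $\eta(x,t)$ needed to close the argument. Finally, \eqref{e2} for $u_0\in H^2(\Omega)\cap H^1_0(\Omega)\cap L^6(\Omega)$ follows by testing the time-differentiated equation by $v$ and invoking the first-eigenvalue bound $\lambda_\Omega(3u_0^2)$ exactly as in Energy Inequality~2, now justified by the $C^{1,1}$-in-time regularity at the approximate level.
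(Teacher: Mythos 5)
Your overall architecture coincides with the paper's proof: regularize via \eqref{P-approx:2}, derive every energy inequality at the smooth approximate level, follow the hierarchy (iii) $\to$ (ii) $\to$ (i) with the uniform bound $\|(\Delta u_{0,n}-u_{0,n}^3+\lam u_{0,n})_-\|_2^2\le r$ for data approximated from within $D_r$, and conclude by compactness, Minty's trick and demiclosedness. However, your treatment of the decisive step --- the sign of $\left(\eta_\lambda,\partial_t\bigl(\partial\psi_\lambda(u_\lambda)\bigr)\right)$ --- contains a genuine gap. At the approximate level the elliptic--cubic term is $\partial\psi_\lambda(u_\lambda)=\partial\psi(J_\lambda u_\lambda)=-\Delta(J_\lambda u_\lambda)+(J_\lambda u_\lambda)^3$ (see \eqref{Jlam}), so its time derivative involves $(J_\lambda u_\lambda)_t$, \emph{not} $v_\lambda=\partial_t u_\lambda$. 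The identities $v_\lambda\eta_\lambda\equiv 0$ and the ``disjoint supports'' of $v_\lambda$ and $\eta_\lambda$ therefore annihilate nothing: $\eta_\lambda$ vanishes where $v_\lambda>0$, but there is no reason for it to vanish where $(J_\lambda u_\lambda)_t>0$, so neither the cubic contribution $3\left(\eta_\lambda,(J_\lambda u_\lambda)^2(J_\lambda u_\lambda)_t\right)$ nor $\left(\eta_\lambda,-\Delta (J_\lambda u_\lambda)_t\right)$ can be disposed of as you propose (the cubic term even has the wrong sign by itself). The missing ingredient is the order-preservation (T-monotonicity) of the resolvent $J_\lambda$ --- exactly Arai's structure condition (A.5), proved in the paper as Lemma \ref{L:A5} --- which yields $(J_\lambda u_\lambda)_t\ge 0$ whenever $\partial_t u_\lambda \ge 0$ (Lemma \ref{L:phi-mono}). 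One then avoids the differential representation altogether and uses the resolvent identity $\partial\psi_\lambda(u_\lambda)=\lambda^{-1}\left(u_\lambda-J_\lambda u_\lambda\right)$ to compute $\left(\eta_\lambda,\tfrac{\d}{\d t}\partial\psi_\lambda(u_\lambda)\right)=-\lambda^{-1}\left(\eta_\lambda,(J_\lambda u_\lambda)_t\right)\ge 0$, using only $\eta_\lambda\le 0$. Without Lemma \ref{L:A5} this step, and with it \eqref{eta-ini}, \eqref{eta-ini2} and all of Energy Inequalities 3--6, cannot be closed.

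Two further points. In (i) the limit passage is dismissed too quickly: since $u_{0,n}\to u_0$ only in $L^2(\Omega)$, one loses $E(u_{0,n})\to E(u_0)$, and Minty's trick for identifying $\eta\in\partial\iI(u_t)$ cannot be run from $t=0$; the paper inserts an estimate of $u_t$ in $V^*=H^{-1}(\Omega)+L^{4/3}(\Omega)$, Aubin--Lions--Simon compactness, and performs Minty's argument on $(\vep,T)$ using $E(u_n(\vep))\to E(u(\vep))$ --- demiclosedness alone is not enough, because $\partial_t u_n$ converges only weakly. In (iv) you propose to read \eqref{e3}, \eqref{e6}, \eqref{e6-0} off Lemma \ref{L:eta-dec}, but that lemma is proved only formally in the paper (its rigorous counterpart is what Appendix \ref{A:reform} establishes, at the level of $\eta_\lambda$), and knowing merely $\eta\in\partial\iII(u)$ does not imply monotonicity of $\|\eta(t)\|_2$. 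The paper argues differently: by the \emph{uniqueness} granted by Theorem \ref{T:reform} one restarts the flow at a.e.\ time $s$ with datum $u(s)$, applies \eqref{eta-ini2} to the shifted solution to get $\|\eta(t)\|_2\le\|\eta(s)\|_2$ for a.e.\ $t>s$, and then a Fubini--Tonelli argument yields the joint a.e.\ statement \eqref{e3}; the inequalities \eqref{e6}, \eqref{e6-0} and \eqref{e2} are recovered by the same restart-plus-uniqueness device.
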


Now, we give a proof of Theorem \ref{A:T:ex} below.

\subsection{Reduction to an abstract Cauchy problem}

Let $T > 0$ be arbitrarily fixed. Set $H = L^2(\Omega)$, $V = H^1_0(\Omega) \cap L^4(\Omega)$ and define a functional $\psi$ on $H$ as in \eqref{varphi}. Moreover, set $\varphi : H \to [0,\infty]$ by
$$
\varphi(u) = \dfrac 1 2 \|u\|_2^2 + \iI(u) \quad \mbox{ for } \ u \in H,
$$
which is homogeneous of degree $p = 2$. Then as in \S \ref{S:e}, (P) is reduced to the abstract Cauchy problem \eqref{ee}, which is also equivalent to
\begin{equation}\label{ee-2}
\partial \varphi(u_t) + \partial \psi(u) \ni \lam u \ \mbox{ in } H, \quad 0 < t < T, \quad u(0) = u_0.
\end{equation}
In order to prove the existence of solutions to \eqref{ee-2}, it suffices to check assumptions (A.1), (A.2), ${\rm (A.3)}'$, (A.4), (A.5) of~\cite{Arai} (see also~\cite{Barbu75}). Since (A.1)--(A.4) follow immediately from the setting of $\psi$ and $\varphi$, we only give a proof for checking (A.5) (i.e., the \emph{$\partial \varphi$-monotonicity} of $\partial \psi$) below.
\begin{lemma}\label{L:A5}
Let $J_\lambda$ be the resolvent of $\partial \psi$, that is, $J_\lambda := (I + \lambda \partial \psi)^{-1}$. Then it holds that
$$
\iI(J_\lambda u - J_\lambda v) \leq \iI(u - v), \quad \|J_\lambda u - J_\lambda v\|_2^2 \leq \|u-v\|_2^2
$$
 for $u, v \in H$. In particular,
 $$
 \varphi(J_\lambda u - J_\lambda v) \leq \varphi(u-v) \quad \mbox{ for all } \ u,v \in H.
 $$
\end{lemma}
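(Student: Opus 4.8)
The plan is to prove the two displayed inequalities separately and then add them: since $\varphi(w) = \tfrac12\|w\|_2^2 + \iI(w)$, the final assertion $\varphi(J_\lambda u - J_\lambda v) \le \varphi(u-v)$ follows at once from the first inequality plus one half of the second. I also observe that the first inequality is nothing but an order-preservation statement for the resolvent: as $\iI$ is the indicator of the cone $K = \{w \ge 0\}$, the bound $\iI(J_\lambda u - J_\lambda v) \le \iI(u-v)$ is trivial when $u - v \notin K$, while for $u - v \in K$ (that is, $u \ge v$ a.e.) it reduces to showing $J_\lambda u \ge J_\lambda v$ a.e.

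Write $\bar u := J_\lambda u$ and $\bar v := J_\lambda v$. By definition of the resolvent together with the representation $\partial \psi(w) = -\Delta w + w^3$ on $D(\partial \psi) = H^2(\Omega) \cap H^1_0(\Omega) \cap L^6(\Omega)$, these satisfy $\bar u + \lambda(-\Delta \bar u + \bar u^3) = u$ and $\bar v + \lambda(-\Delta \bar v + \bar v^3) = v$, with $\bar u, \bar v \in H^2(\Omega) \cap H^1_0(\Omega) \cap L^6(\Omega)$. Subtracting and setting $W := \bar u - \bar v \in H^1_0(\Omega)$ yields
\begin{equation*}
W - \lambda \Delta W + \lambda(\bar u^3 - \bar v^3) = u - v \quad \mbox{ in } \ \Omega.
\end{equation*}
For the $L^2$-nonexpansiveness I would test this identity by $W$: integration by parts gives the diffusion contribution $\lambda\|\nabla W\|_2^2 \ge 0$, while the cubic term yields $\lambda(\bar u^3 - \bar v^3, W) \ge 0$ by monotonicity of $s \mapsto s^3$ (the factors $\bar u^3 - \bar v^3$ and $W$ share the same pointwise sign). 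Hence $\|W\|_2^2 \le (u-v, W) \le \|u-v\|_2\,\|W\|_2$, so $\|J_\lambda u - J_\lambda v\|_2 \le \|u-v\|_2$, which is the general nonexpansiveness of resolvents of maximal monotone operators.

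For the order-preservation I would test the same identity by $-W_-$, where $W_- := (\bar v - \bar u)_+ \ge 0$ is the negative part of $W$; this is admissible because $W \in H^1_0(\Omega)$ forces $W_- \in H^1_0(\Omega)$, and the $L^6$-regularity of $\bar u, \bar v$ makes the cubic term integrable against $W_-$. Using $\int_\Omega W(-W_-)\,\d x = \|W_-\|_2^2$, the identity $\int_\Omega (-\Delta W)(-W_-)\,\d x = \|\nabla W_-\|_2^2$, and the pointwise sign $(\bar u^3 - \bar v^3)(-W_-) \ge 0$, I arrive at
\begin{equation*}
\|W_-\|_2^2 + \lambda\|\nabla W_-\|_2^2 + \lambda\int_\Omega (\bar u^3 - \bar v^3)(-W_-)\,\d x = \int_\Omega (u-v)(-W_-)\,\d x.
\end{equation*}
When $u \ge v$ a.e. the right-hand side is $\le 0$ while each term on the left is nonnegative; therefore $\|W_-\|_2 = 0$, i.e.\ $\bar u \ge \bar v$ a.e., which is exactly $\iI(J_\lambda u - J_\lambda v) \le \iI(u-v)$.

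I expect the only delicate point to be this order-preservation step: one must verify that $W_-$ is a legitimate test function (its $H^1_0$-membership and the integrability of the cubic term against it) and that all three contributions on the left are genuinely sign-definite, the crucial one being $(\bar u^3 - \bar v^3)(-W_-) \ge 0$, which rests on the monotonicity of the cube together with the fact that $\bar u - \bar v$ and $-W_-$ carry the same pointwise sign. Once these two inequalities are in hand, the $\varphi$-estimate is immediate by adding the $\iI$-inequality to one half of the $L^2$-inequality.
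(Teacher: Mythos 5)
Your proposal is correct and follows essentially the same route as the paper: the heart of the matter is the order-preservation statement, which you establish exactly as the paper does, by testing the resolvent difference identity $W - \lambda\Delta W + \lambda(\bar u^3 - \bar v^3) = u - v$ against $-W_- = -(J_\lambda u - J_\lambda v)_-$ and using the monotonicity of $-\Delta$ and of the cube to conclude $\|W_-\|_2 = 0$ when $u \geq v$. The only cosmetic difference is that you prove the $L^2$-nonexpansiveness directly by testing with $W$, whereas the paper simply cites the standard fact that resolvents of maximal monotone operators are non-expansive.
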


\begin{proof}
The second inequality follows from a well-known fact that resolvents of maximal monotone operators are non-expansive, i.e., $\|J_\lambda u - J_\lambda v\|_H \leq \|u-v\|_H$ for $u,v \in H$ (see e.g.~\cite{HB1}). So it remains to prove the first inequality. In case $\iI(u-v)=\infty$, we have nothing to prove. In case $\iI(u-v) = 0$, that is, $u \geq v$ a.e.~in $\Omega$, by the definition of $J_\lambda$, we see that
\begin{equation}\label{diff-Jlam}
J_\lambda u - J_\lambda v + \lambda \left[ \partial \psi(J_\lambda u) - \partial \psi(J_\lambda v) \right] = u - v.
\end{equation}
Test both sides by $ -(J_\lambda u - J_\lambda v)_- \leq 0$ to get
$$
\int_\Omega \left( J_\lambda u - J_\lambda v \right)_-^2 \, \d x
\leq  -\int_\Omega (u-v) \left( J_\lambda u - J_\lambda v \right)_-  \, \d x \leq 0,
$$
 which implies $J_\lambda u \geq J_\lambda v$ a.e.~in $\Omega$. Here we used the fact that
 \begin{align*}
  \left( \partial \psi(J_\lambda u) - \partial \psi(J_\lambda u) ,  -(J_\lambda u - J_\lambda v)_-  \right)
  = \left( - \Delta (J_\lambda u - J_\lambda v) ,  -(J_\lambda u - J_\lambda v)_-  \right)\\
  + \left( |J_\lambda u|^2 J_\lambda u - |J_\lambda v|^2 J_\lambda v,  -(J_\lambda u - J_\lambda v)_-  \right) \geq 0
 \end{align*}
 by monotonicity. Thus $\iI(J_\lambda u - J_\lambda v) = 0$.
\end{proof}

\subsection{Proof of (iii)}

Let us prove (iii). To this end, suppose that
\begin{equation}\label{u0-iii}
 u_0 \in D(\partial \psi) = H^2(\Omega) \cap H^1_0(\Omega) \cap L^6(\Omega).
\end{equation}
Then thanks to Arai~\cite[Theorem 3.3]{Arai} (see also Barbu~\cite{Barbu75}), we assure that \eqref{ee-2} admits a solution $u \in W^{1,\infty}(0,T;H) \cap L^\infty(0,T;V)$ such that the function $t \mapsto \varphi(u'(t))$ belongs to $L^\infty(0,T)$ and the function $t \mapsto \psi(u(t))$ is absolutely continuous on $[0,T]$. Concerning energy inequalities, one can rigorously derive \eqref{e1}--\eqref{e1-2} as in \S \ref{S:e} under the frame of Definition \ref{D:sol}. So we shall verify the other energy inequalities. To this end, the rest of this subsection is devoted to preparing auxiliary steps.

Recall approximate problems \eqref{P-approx} for (P) and denote by $u_\lambda$ the unique solution. Furthermore, let $\eta_\lambda$ be the section of $\partial \iI(\partial_t u_\lambda)$ satisfying
\begin{equation}\label{P-approx:2}
\partial_t u_\lambda + \eta_\lambda + \partial \psi_\lambda(u_\lambda) = \lam u_\lambda, \quad u_\lambda(0) = u_0.
\end{equation}
Set $p_\lambda := \partial_t u_\lambda + \eta_\lambda$. Then $p_\lambda$ is a section of $\partial \varphi(\partial_t u_\lambda)$.

\begin{remark}[Approximate equations in~\cite{Arai}]
 {\rm
 Approximate problems used in~\cite{Arai} seem slightly different from \eqref{P-approx:2}; indeed, they involve a liner relaxation term such as
 $$
 \lambda u_t + \partial \varphi(u_t) + \partial \psi_\lambda(u) \ni \lam u,
 $$
since the quadratic coercivity of $\varphi$ is not assumed. However, concerning \eqref{pde}, one can reproduce the same arguments as in~\cite{Arai} for \eqref{P-approx:2}, since the original equation \eqref{pde} already includes the linear relaxation term. On the other hand, the following arguments also work well for approximate equations with the additional relaxation term as in~\cite{Arai}.
 }
\end{remark}

As mentioned in \S \ref{S:e}, we assure that  $\partial_t u_\lambda, \eta_\lambda \in C^{0,1}([0,T];H)$  and $\eta_\lambda =  -( \lam u_\lambda - \partial \psi_\lambda(u_\lambda) )_- $ in $H$ for each $t \in [0,T]$. In particular, one finds that
\begin{equation*}
\eta_\lambda(0) := \lim_{t \to 0_+} \eta_\lambda(t) =  -\big( \lam u_0 - \partial \psi_\lambda(u_0) \big)_- .
\end{equation*}
Moreover, every assertion obtained by~\cite{Arai} for $u_\lambda$ is valid (see proofs of Theorems 3.1 and 3.3 in~\cite{Arai} for details). In particular, let us recall that, up to a (not relabeled) subsequence $\lambda \to 0$,
\begin{alignat*}{4}
 J_\lambda u_\lambda &\to u \quad &&\mbox{ strongly in } C([0,T];H),\\
 u_\lambda &\to u \quad &&\mbox{ strongly in } C([0,T];H),\\
 \partial_t u_\lambda &\to u_t \quad &&\mbox{ weakly star in } L^\infty(0,T;H),\\
 \partial \psi_\lambda(u_\lambda) &\to \partial \psi(u) \quad &&\mbox{ weakly star in } L^\infty(0,T;H),\\
 p_\lambda &\to p \quad &&\mbox{ weakly star in } L^\infty(0,T;H),
\end{alignat*}
and moreover, $t \mapsto \psi(u(t))$ is (absolutely) continuous on $[0,T]$ (hence, $u \in C([0,T];H^1_0(\Omega) \cap L^4(\Omega))$) and $p \in \partial \varphi(u_t)$. Since $\partial \psi(u) \in L^\infty(0,T;H)$ and $u(t) \in D(\partial \psi) = H^2(\Omega) \cap H^1_0(\Omega) \cap L^6(\Omega)$ for a.e.~$t \in (0,T)$, it follows that $u \in L^\infty(0,T;H^2(\Omega) \cap L^6(\Omega))$ from the fact that $\|\Delta w\|_2^2 + \|w\|_6^6 \leq \|\partial \psi(w)\|_2^2$ for all $w \in D(\partial \psi)$ along with the elliptic estimate $\|w\|_{H^2(\Omega)} \leq C(\|\Delta w\|_2 + \|w\|_2)$ for $w \in H^2(\Omega)$.  Thus $u$ solves (P). Here we further observe that
$$
 J_\lambda u_\lambda \to u \quad \mbox{ weakly star in } L^\infty(0,T;H^1_0(\Omega) \cap L^4(\Omega))
 $$
 and (see~\cite{HB1},~\cite{BCP} and~\cite{B})
 $$
 \lim_{\lambda \to 0} \int^T_0 \left( \partial \psi_\lambda (u_\lambda), J_\lambda u_\lambda \right) \, \d t \to
 \int^T_0 \left( \partial \psi (u), u \right) \, \d t.
 $$
One can also verify that
\begin{align*}
\limsup_{\lambda \to 0} \int^T_0 \|\nabla J_\lambda u_\lambda(t)\|_2^2 \, \d t
 &= \limsup_{\lambda \to 0} \int^T_0 \left( - \Delta J_\lambda u_\lambda , J_\lambda u_\lambda \right) \, \d t\\
&\leq \limsup_{\lambda \to 0} \int^T_0 \left( \partial \psi_\lambda(u_\lambda) - (J_\lambda u_\lambda)^3, J_\lambda u_\lambda \right) \, \d t\\
&\leq \int^T_0 \left( - \Delta u , u \right) \, \d t
= \int^T_0 \|\nabla u(t)\|_2^2 \, \d t,
\end{align*}
which implies
$$
J_\lambda u_\lambda \to u \quad \mbox{ strongly in } L^2(0,T;H^1_0(\Omega)).
$$
Similarly,
$$
J_\lambda u_\lambda \to u \quad \mbox{ strongly in } L^4(0,T;L^4(\Omega)).
$$
Hence
\begin{equation}\label{phi-conv}
\int^T_0 \phi(J_\lambda u_\lambda(t)) \, \d t \to \int^T_0 \phi(u(t)) \, \d t.
\end{equation}
Moreover, by $u \in C([0,T];H^1_0(\Omega) \cap L^4(\Omega)) \cap L^\infty(0,T;H^2(\Omega) \cap L^6(\Omega))$, we deduce that $u \in C_w([0,T];H^2(\Omega) \cap L^6(\Omega))$ (see~\cite{LM}). It follows that
\begin{equation}\label{c:u-H2}
J_\lambda u_\lambda(t) \to u(t) \quad \mbox{ weakly in } H^2(\Omega) \cap L^6(\Omega)
\quad \mbox{ for any } \ t \in [0,T].
\end{equation}
On the other hand, there exists $\eta \in L^\infty(0,T;H)$ such that
$$
\eta_\lambda \to \eta \quad \mbox{ weakly star in } L^\infty(0,T;H)
$$
and $\eta = p - u_t \in \partial \iI(u_t)$. From the equivalence between \eqref{pde} and \eqref{iAC}, we also remark that
\begin{equation}\label{eta-repre}
\eta =  -\left( \lam u - \partial \psi(u) \right)_- = - \left( \Delta u - u^3 + \lam u\right)_-  \ \mbox{ a.e.~in } \ \Omega \times (0,T).
\end{equation}

We next justify formal arguments in \S \ref{S:e} to derive energy inequalities (except Energy Inequality 1 in \S \ref{S:e}). To this end, we claim that
\begin{equation}\label{c1:Ju-reg}
J_\lambda u, \ |J_\lambda u_\lambda|J_\lambda u_\lambda \in W^{1,2}(0,T;H^1_0(\Omega)).
\end{equation}
 Indeed, recalling \eqref{diff-Jlam} with $u$ and $v$ replaced by $u_\lambda(t+h)$ and $u_\lambda(t)$, respectively, and multiplying it by $J_\lambda u_\lambda(t+h) - J_\lambda u_\lambda(t)$, one can derive that
  \begin{align*}
   \lefteqn{
 \dfrac 1 2 \|J_\lambda u_\lambda(t + h) - J_\lambda u_\lambda(t)\|_2^2
  + \lambda \left\|\nabla \left( J_\lambda u_\lambda(t+h) - J_\lambda u_\lambda(t) \right)\right\|_2^2}\\
&+ \dfrac 3 4 \Big\| (|J_\lambda u_\lambda|J_\lambda u_\lambda)(t+h) - (|J_\lambda u_\lambda|J_\lambda u_\lambda) (t) \Big\|_2^2
 \leq \dfrac 1 2 \|u_\lambda(t+h)-u_\lambda(t)\|_2^2
  \end{align*}
  for a.e.~$t \in (0,T)$ and $h \in \mathbb R$ satisfying $t + h \in [0,T]$. Here we also used the fundamental inequality,
  \begin{equation}\label{fi}
  \dfrac 3 4 \Big| |a|a - |b|b \Big|^2 \leq (a^3 - b^3)(a-b) \quad \mbox{ for all } \ a,b \in \mathbb R.
  \end{equation}
   From the arbitrariness of $h$, we deduce that $J_\lambda u_\lambda \in W^{1,2}(0,T;H^1_0(\Omega))$ by $u_\lambda \in  C^{1,1}  ([0,T];L^2(\Omega)) \subset W^{1,2}(0,T;L^2(\Omega))$.

By~\cite[Lemma 3.10]{Arai} and the monotonicity of $\partial \psi_\lambda$ along with Lemma \ref{L:A5}, we have
\begin{lemma}\label{L:phi-mono} For $u \in C^1([0,T];H)$ satisfying $u_t \geq 0$ a.e.~in $\Omega \times (0,T)$, it holds that
\begin{enumerate}
 \item $\iI( (J_\lambda u)_t ) \leq \iI(u_t)$ for a.e.~$t \in (0,T)$, in particular, $(J_\lambda u)_t \geq 0$ a.e.~in $\Omega \times (0,T)$,\\
 \item for any $\eta \in \partial \iI(u_t)$, one has
$$
\left( \eta(t), \dfrac{\d}{\d t} \partial \psi_\lambda(u(t)) \right) \geq 0 \quad \mbox{ for a.e. } t \in (0,T).
$$
\end{enumerate}
\end{lemma}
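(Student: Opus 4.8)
The plan is to establish the two assertions in turn: (i) will come directly from the order-preserving property of the resolvent $J_\lambda$ recorded in Lemma~\ref{L:A5}, and (ii) will then follow from (i) by exploiting the elementary Yosida identity $\partial\psi_\lambda = \lambda^{-1}(I - J_\lambda)$, which reduces the inner product to a sign-definite integral.

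For (i), I would first note that the hypothesis $u_t \geq 0$ a.e.~in $\Omega\times(0,T)$ amounts to the pointwise monotonicity $u(t)\geq u(s)$ a.e.~in $\Omega$ whenever $t\geq s$, since $u(t)-u(s)=\int_s^t u_\tau\,\d\tau$ in $H$. Lemma~\ref{L:A5} yields $\iI(J_\lambda u(t)-J_\lambda u(s))\leq \iI(u(t)-u(s))=0$, i.e.~$J_\lambda u(t)\geq J_\lambda u(s)$, so $t\mapsto J_\lambda u(t)$ is non-decreasing. As $J_\lambda$ is non-expansive and $u\in C^1([0,T];H)$, this map is Lipschitz, hence differentiable for a.e.~$t$, and the monotonicity forces $(J_\lambda u)_t\geq 0$ a.e.; therefore $\iI((J_\lambda u)_t)=0\leq \iI(u_t)$.

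For (ii), I would invoke the standard representation $\partial\psi_\lambda(v)=\lambda^{-1}(v-J_\lambda v)$ (equivalent to $\partial\psi_\lambda(v)=\partial\psi(J_\lambda v)$ through $(I+\lambda\partial\psi)J_\lambda=I$), so that for a.e.~$t$ one has $\frac{\d}{\d t}\partial\psi_\lambda(u(t))=\lambda^{-1}(u_t-(J_\lambda u)_t)$ in $H$. Since $\eta\in\partial\iI(u_t)$ with $u_t\geq 0$, one has $\eta\leq 0$ a.e.~and $\eta\,u_t=0$ a.e., hence $(\eta,u_t)=0$; combining $\eta\leq 0$ with $(J_\lambda u)_t\geq 0$ from (i) gives the pointwise bound $\eta\,(J_\lambda u)_t\leq 0$, and thus
$$
\left(\eta,\frac{\d}{\d t}\partial\psi_\lambda(u)\right)=-\frac1\lambda\bigl(\eta,(J_\lambda u)_t\bigr)\geq 0.
$$

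The main obstacle is not the algebra but the justification of the a.e.~time-differentiability and of the formula for $\frac{\d}{\d t}\partial\psi_\lambda(u)$; here I would rely on the regularity already secured in the excerpt (the reasoning behind \eqref{c1:Ju-reg} and the Lipschitz bound $\partial_t u_\lambda,\eta_\lambda\in C^{0,1}([0,T];H)$) together with \cite[Lemma~3.10]{Arai}. Once the Yosida identity is in place, the whole statement is a sign-chase, the conceptual content being entirely carried by the order preservation of $J_\lambda$ from Lemma~\ref{L:A5}.
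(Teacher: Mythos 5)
Your proof is correct and matches the paper's approach: the paper establishes this lemma simply by invoking Arai's Lemma 3.10, the monotonicity of $\partial\psi_\lambda$, and Lemma \ref{L:A5}, and your argument---order preservation of $J_\lambda$ from Lemma \ref{L:A5} for (i), then the Yosida identity $\partial\psi_\lambda=\lambda^{-1}(I-J_\lambda)$ combined with the sign facts $\eta\leq 0$ and $\eta\, u_t=0$ a.e.\ for (ii)---is exactly the standard content behind that citation. Your care about a.e.\ time-differentiability is also sound, since $J_\lambda u$ and $\partial\psi_\lambda(u)$ are Lipschitz curves in the Hilbert space $H$ (by non-expansiveness of $J_\lambda$ and Lipschitz continuity of $\partial\psi_\lambda$) and hence differentiable for a.e.\ $t$.
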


\subsection{Derivation of Energy Inequalities under \eqref{u0-iii}}

We next derive energy inequalities.

\smallskip
\noindent
{\bf Energy Inequalities 3.} Differentiate both sides of \eqref{P-approx:2} in $t$ (indeed, it is rigorously possible, since both sides of \eqref{P-approx:2} are smooth (in $t$) enough by approximation) and put $v_\lambda := \partial_t u_\lambda \in C^{0,1}([0,T];H) \subset W^{1,\infty}(0,T;H)$. Then
\begin{equation}\label{pde:va}
 \partial_t v_\lambda + \partial_t \eta_\lambda + \dfrac{\d}{\d t} \partial \psi_\lambda(u_\lambda) = \lam v_\lambda.
\end{equation}
Multiplying both sides by $\eta_\lambda$ and employing (ii) of Lemma \ref{L:phi-mono}, we deduce that
$$
\dfrac{\d}{\d t} \iI(v_\lambda) + \dfrac 1 2 \dfrac{\d}{\d t} \|\eta_\lambda\|_2^2 \leq \lam \int_\Omega v_\lambda \eta_\lambda \, \d x = 0,
$$
which leads us to get
\begin{equation}\label{e3-1a}
\|\eta_\lambda(t)\|_2^2 \leq \|\eta_\lambda(0)\|_2^2 = \left\| (\lam u_0 - \partial \psi_\lambda(u_0))_- \right\|_2^2 \quad \mbox{ for all } \ t \in [0,T].
\end{equation}
Since $\partial \psi_\lambda(u_0) \to \partial \psi(u_0)$ strongly in $H$ as $\lambda \to 0$ by $u_0 \in D(\partial \psi)$ (see~\cite{HB1}), one has
$$
\|\eta(t)\|_2^2 \leq \|\eta\|_{L^\infty(0,T;H)}^2 \leq \liminf_{\lambda \to 0} \|\eta_\lambda\|_{L^\infty(0,T;H)}^2 \leq \|(\lam u_0 - \partial \psi(u_0))_-\|_2^2
$$
for a.e.~$t \in (0,T)$. Hence \eqref{eta-ini2} follows.

\smallskip
\noindent
{\bf Energy Inequalities 4--6.}
Thanks to \eqref{eta-ini2}, as in \S \ref{S:e}, one can derive \eqref{e4}--\eqref{e5-2} by replacing $\|\eta_0\|_2$ by $\|(\Delta u_0 - u_0^3 + \lam u_0)_-\|_2$.
As for \emph{Energy Inequality 6}, test \eqref{P-approx:2} by $(\partial \psi_\lambda(u_\lambda) - \lam  u_\lambda)_t$, which is well-defined due to the smoothness of $u_\lambda$ and $\partial \psi_\lambda(u_\lambda)$ in $t$. Then it follows that
\begin{align*}
 \left( \partial_t u_\lambda + \eta_\lambda, \left( \partial \psi_\lambda(u_\lambda) - \lam u_\lambda \right)_t \right)
 + \dfrac 1 2 \dfrac \d {\d t} \left\| \partial \psi_\lambda(u_\lambda) - \lam u_\lambda \right\|_2^2 = 0.
\end{align*}
Here we also observe by (ii) of Lemma \ref{L:phi-mono} that
\begin{align*}
 \left( \eta_\lambda, \left( \partial \psi_\lambda(u_\lambda) - \lam u_\lambda \right)_t \right) = \left( \eta_\lambda, (\partial \psi_\lambda(u_\lambda))_t \right)
 \geq 0,
\end{align*}
and moreover, 
\begin{align}
 \left( \partial_t u_\lambda ,\dfrac \d {\d t} \partial \psi_\lambda(u_\lambda) \right)
 &= \left( (J_\lambda u_\lambda)_t + \lambda \dfrac \d {\d t} \partial \psi_\lambda(u_\lambda) , \dfrac \d {\d t} \partial \psi_\lambda(u_\lambda) \right)\nonumber\\
 &\geq \left( (J_\lambda u_\lambda)_t , \dfrac \d {\d t} \partial \psi_\lambda(u_\lambda) \right)\nonumber\\
 &\stackrel{\eqref{Jlam}}\geq \|\nabla (J_\lambda u_\lambda)_t\|_2^2  + \dfrac 3 4 \left\| \dfrac \d {\d t} \left( |J_\lambda u_\lambda| J_\lambda u_\lambda \right)\right\|_2^2.
 \label{ut*dpsit}
\end{align}
 Here we also used the fact that
 \begin{align*}
  \lefteqn{
  \left( J_\lambda u_\lambda (t+h) - J_\lambda u_\lambda (t), \partial \psi_\lambda (u_\lambda (t+h)) - \partial \psi_\lambda (u_\lambda (t)) \right)
  }\\
  &\geq \Big\| \nabla \left( J_\lambda u_\lambda(t+h) - J_\lambda u_\lambda(t) \right) \Big\|_2^2 + \dfrac 3 4 \Big\| (|J_\lambda u_\lambda| J_\lambda u_\lambda)(t+h) - (|J_\lambda u_\lambda|J_\lambda u_\lambda)(t) \Big\|_2^2 
 \end{align*}
 by \eqref{fi}.  By combining all these facts,
\begin{align*}
 \|\nabla (J_\lambda u_\lambda)_t\|_2^2 + \dfrac 3 4 \left\| \dfrac \d {\d t} \left( |J_\lambda u_\lambda| J_\lambda u_\lambda \right)\right\|_2^2 + \dfrac 1 2 \dfrac \d {\d t} \left\| \partial \psi_\lambda(u_\lambda) - \lam u_\lambda \right\|_2^2\\
\leq \lam \|\partial_t u_\lambda\|_2^2 = - \lam \dfrac{\d}{\d t} E_\lambda(u_\lambda(t)),
\end{align*}
where $E_\lambda(w) := \psi_\lambda(w) - (\lam/2) \|w\|_2^2$.
Integrate both sides over $(0,t)$ to see that
\begin{align}
\lefteqn{
 \int^t_0 \left( \|\nabla (J_\lambda u_\lambda)_\tau\|_2^2 + \dfrac 3 4 \left\| \dfrac \d {\d t} \left( |J_\lambda u_\lambda| J_\lambda u_\lambda \right)\right\|_2^2 \right) \, \d \tau
 }\nonumber\\
& + \dfrac 1 2 \left\| \partial \psi_\lambda(u_\lambda(t)) - \lam u_\lambda(t) \right\|_2^2 + \lam E_\lambda(u_\lambda(t))
\leq \dfrac 1 2 \left\| \partial \psi_\lambda(u_0) - \lam u_0 \right\|_2^2 + \lam E_\lambda(u_0).\label{e6a}
\end{align}
Thus
\begin{alignat}{4}
(J_\lambda u_\lambda)_t &\to u_t \quad &&\mbox{ weakly in } L^2(0,T;H^1_0(\Omega)),\label{sharp2-1}\\
\partial_t \left( |J_\lambda u_\lambda| J_\lambda u_\lambda \right) &\to \partial_t (|u|u) \quad &&\mbox{ weakly in } L^2(0,T;L^2(\Omega)).\label{sharp2-2}
\end{alignat}
Passing to the limit in \eqref{e6a} as $\lambda \to 0$ and recalling that $u \in C_w([0,T];H^2(\Omega)\cap L^6(\Omega))$, we have
\begin{align*}
\lefteqn{
 \int^t_0 \left( \|\nabla u_\tau\|_2^2 + \dfrac 3 4 \left\| \dfrac \d {\d t} \left( |J_\lambda u_\lambda| J_\lambda u_\lambda \right)\right\|_2^2 \right) \, \d \tau
 }\\
& + \dfrac 1 2 \left\| \partial \psi(u(t)) - \lam u(t) \right\|_2^2 + \lam E(u(t))
\leq \dfrac 1 2 \left\| \partial \psi(u_0) - \lam u_0 \right\|_2^2 + \lam E(u_0)
\end{align*}
for all $t \in (0,T)$. Furthermore, one can also derive \eqref{e6-1} (with $\|\eta_0\|_2$ replaced by $\|(\Delta u_0 - u_0^3 + \lam u_0)_-\|_2$). 
  Then \eqref{diss2} also follows immediately from \eqref{e6-1} as in \S \ref{S:e}.

\subsection{Proof of (ii)}\label{Ss:ii}

We next prove (ii). Take an approximate sequence $(u_{0,n})$ satisfying
\begin{equation}\label{u0-ii}
 u_{0,n} \in D_r, \quad
 u_{0,n} \to u_0 \quad \mbox{ strongly in } H^1_0(\Omega) \cap L^4(\Omega).
\end{equation}
Since $u_{0,n}$ fulfills \eqref{u0-iii}, the solution $u_n$ of (P) with $u_0$ replaced by $u_{0,n}$ and the section $\eta_n \in \partial \iI(\partial_t u_n)$ as in \eqref{EQ} satisfy all energy inequalities that have been justified in the proof of (iii). Here we mainly use \eqref{e1}--\eqref{e1-2} and \eqref{e5-1} and note by \eqref{eta-ini2} and \eqref{u0-ii} that
$$
E(u_{0,n}) \to E(u_0), \quad
\|\eta_n(t)\|_2^2 \leq \|\left(\Delta u_{0,n} - u_{0,n}^3 + \lam u_{0,n}\right)_-\|_2^2 \leq r \ \mbox{ for a.e. } t > 0.
$$
Hence, by a priori estimates \eqref{e1}--\eqref{e1-2} and \eqref{e5-1} for $u_n$, one can obtain, up to a (not relabeled) subsequence $n \to \infty$,
\begin{alignat*}{4}
 u_n &\to u \quad &&\mbox{ weakly in } W^{1,2}(0,T;H),\\
 & &&\mbox{ weakly star in } L^\infty(0,T;H^1_0(\Omega) \cap L^4(\Omega)),\\
 & &&\mbox{ strongly in } C([0,T];H),\\
 - \Delta u_n + u_n^3 &\to - \Delta u + u^3 \quad &&\mbox{ weakly in } L^2(0,T;H),\\
 \eta_n &\to \eta \quad &&\mbox{ weakly star in } L^\infty(0,T;H),
\end{alignat*}
which also implies $u(t) \in D(\partial \psi)$ for a.e.~$t \in (0,T)$ and $u_t + \eta - \Delta u + u^3 = \lam u$ a.e.~in $\Omega \times (0,T)$. Moreover, as in the proof of \eqref{phi-conv}, one finds that
$$
 \int^T_0 E(u_n(t)) \, \d t \to \int^T_0 E(u(t)) \, \d t.
$$
We next identify the limit $\eta$. We see that
\begin{align*}
 \int^T_0 \left( \eta_n, \partial_t u_n \right) \, \d t
 &= - \int^T_0 \|\partial_t u_n\|_2^2 - E(u_n(T)) + E(u_{0,n}),
\end{align*}
which implies
$$
\limsup_{n \to \infty}  \int^T_0 \left( \eta_n, \partial_t u_n \right) \, \d t
\leq - \int^T_0 \|u_t\|_2^2 - E(u(T)) + E(u_0)
= \int^T_0 (\eta, u_t) \, \d t.
$$
Hence by Minty's trick, we conclude that $u_t \geq 0$ and $\eta \in \partial \iI(u_t)$ a.e.~in $\Omega \times (0,T)$. Since the function $t \mapsto u(t)$ is weakly continuous on $[0,T]$ with values in $H^1_0(\Omega) \cap L^4(\Omega)$ (see~\cite{LM}) and the function $t \mapsto \phi(u(t))$ is (absolutely) continuous on $[0,T]$ (by $u_t \in L^2(0,T;H)$ and $-\Delta u + u^3 \in L^2(0,T;H)$), we also assure by the uniform convexity of $H^1_0(\Omega) \cap L^4(\Omega)$ that
$$
u \in C([0,T];H^1_0(\Omega) \cap L^4(\Omega)).
$$

Concerning energy inequalities, \eqref{e1}--\eqref{e1-2} are (rigorously) derived as in \S \ref{S:e}. Moreover, \eqref{eta-ini} is proved as in the proof of (iii). Hence \eqref{e4}--\eqref{e5-2} can be also rigorously derived with $\|\eta_0\|_2^2$ replaced by $r$.  Moreover, combining \eqref{e6-1} with \eqref{e5-1} for $u_n$, one can verify
\begin{alignat*}{4}
 t^{1/2} \partial_t u_n &\to t^{1/2} u_t \quad &&\mbox{ weakly in } L^2(0,T;H^1_0(\Omega)),\\
 t^{1/2} \partial_t (|u_n|u_n) &\to t^{1/2} \partial_t (|u|u) \quad &&\mbox{ weakly in } L^2(0,T;L^2(\Omega)),\\
 t^{1/2} \Delta u_n &\to t^{1/2} \Delta u \quad &&\mbox{ weakly star in } L^\infty(0,T;L^2(\Omega)),\\
 t^{1/2} u_n^3 &\to t^{1/2} u^3 \quad &&\mbox{ weakly star in } L^\infty(0,T;L^2(\Omega)),
\end{alignat*}
which also yields \eqref{e6-1}--\eqref{diss2}. Thus (ii) has been proved.

\subsection{Proof of (i)}

Finally, let us prove (i). To this end, take $u_{0,n}$ satisfying
\begin{equation}\label{u0-i}
 u_{0,n} \in D_r, \quad
 u_{0,n} \to u_0 \quad \mbox{ strongly in } L^2(\Omega).
\end{equation}
The solution $u_n$ of (P) with $u_0$ replaced by $u_{0,n}$ and the section $\eta_n$ of $\partial \iI(\partial_t u_n)$ satisfy all the energy inequalities that are justified in (iii). Here we mainly use \eqref{eta-ini2}, \eqref{e4}--\eqref{e4-2}, \eqref{e5-2} and \eqref{e6-3} (with $\|\eta_0\|_2$ replaced by $\|(\Delta u_{0,n} - u_{0,n}^3 + \lam u_{0,n})_-\|_2$) for $u_n$ along with the fact that
$$
\|\eta_n(t)\|_2^2 \leq \|\left(\Delta u_{0,n} - u_{0,n}^3 + \lam u_{0,n}\right)_-\|_2^2 \leq r \quad \mbox{ for a.e. } t > 0.
$$
Moreover, \eqref{e4-2} yields
\begin{align*}
 \int^t_0 \tau \|\partial_\tau u_n\|_2^2 \, \d \tau + t E(u_n(t))
 \leq \dfrac{C_1}2 t \left( 1 + \|\eta_0\|_2^{4/3} \right) + \dfrac 1 4 \|u_{0,n}\|_2^2,
\end{align*}
which implies
\begin{equation}\label{Eu-ep}
E(u_n(t)) \leq \dfrac{C_1}2 \left( 1 + r^{2/3} \right) + \dfrac 1 {4t} \|u_{0,n}\|_2^2 \quad \mbox{ for any } \ t > 0.
\end{equation}
Due to the lack of the convergence $E(u_{0,n}) \to E(u_0)$, we need an extra argument. One can obtain the following estimate for solutions $u$ of (P) in the dual space $V^* = H^{-1}(\Omega) + L^{4/3}(\Omega)$ of $V = H^1_0(\Omega) \cap L^4(\Omega)$:
\begin{align*}
 \int^T_0 \|u_t\|_{V^*}^{4/3} \, \d t
& \leq C \int^T_0 \left( \|\eta\|_2^{4/3} + \|\Delta u\|_{V^*}^{4/3} + \|u^3\|_{V^{*}}^{4/3} + \|u\|_2^{4/3} \right) \, \d t \nonumber\\
& \leq C \int^T_0 \left( \|\eta\|_2^2 + \|\nabla u\|_2^2 + \|u\|_{L^4(\Omega)}^4 + \|u\|_2^2 + 1 \right) \, \d t.
 \end{align*}
 By Aubin-Lions-Simon's compactness lemma along with the compact embeddings $V \hookrightarrow L^2(\Omega) \equiv (L^2(\Omega))^* \hookrightarrow V^*$, it follows that
  \begin{alignat*}{4}
   u_n &\to u \quad    &&\mbox{ weakly star in } L^\infty(0,T;L^2(\Omega)),\\
   & &&\mbox{ weakly in } W^{1,4/3}(0,T;V^*) \cap L^2(0,T;H^1_0(\Omega)) \cap L^4(0,T; L^4(\Omega)),\\
   & &&\mbox{ strongly in } L^2(0,T;L^2(\Omega)) \cap C([0,T];V^*),\\
   \eta_n &\to \eta \quad &&\mbox{ weakly star in } L^\infty(0,T;L^2(\Omega)).
  \end{alignat*}
Moreover, $u \in C_w([0,T];L^2(\Omega))$ and $u(0) = u_0$. Let $\delta \in (0,T)$ be arbitrarily fixed. Then it follows from \eqref{e4-2}, \eqref{e5-2} and \eqref{e6-3} for $u_n$ that
  \begin{alignat*}{4}
   u_n &\to u \quad &&\mbox{ strongly in } C([\delta,T];L^2(\Omega)),\\
   t^{1/2} \partial_t u_n &\to t^{1/2}u_t &&\mbox{ weakly in } L^2(0,T;L^2(\Omega)),\\
   t^{1/2}u_n &\to t^{1/2}u &&\mbox{ weakly star in } L^\infty(0,T;H^1_0(\Omega) ),\\
   t^{1/4} u_n &\to t^{1/4} u &&\mbox{ weakly in } L^\infty(0,T;L^4(\Omega)),\\
   t\left(- \Delta u_n + u_n^3\right) &\to t \left(- \Delta u + u^3\right) \quad &&\mbox{ weakly star in } L^\infty(0,T;L^2(\Omega)),
  \end{alignat*}
  and hence, $u_t + \eta - \Delta u + u^3 = \lam u$ a.e.~in $\Omega \times (0,T)$. Here we used the demiclosedness of maximal monotone operators to identify the limit. Moreover, from the arbitrariness of $\delta > 0$, we see that $u \in C((0,T];L^2(\Omega))$. We claim that $u(t) \to u_0$ strongly in $L^2(\Omega)$ as $t \to 0_+$, which also implies $u \in C([0,T];L^2(\Omega))$. Indeed, since $u(t) \to u_0$ weakly in $L^2(\Omega)$ as $t \to 0_+$, by \eqref{e4-0} and \eqref{u0-i},
  $$
  \|u_0\|_2 \leq \liminf_{t \searrow 0} \|u(t)\|_2 \leq \limsup_{t \searrow 0} \|u(t)\|_2 \leq \|u_0\|_2,
  $$
  which concludes that $u(t) \to u_0$ strongly in $L^2(\Omega)$ as $t \searrow0$. Thus we obtain $u \in C([0,T];L^2(\Omega))$.

  Now, it remains to identify the limit $\eta$ of $\eta_n \in \partial \iI(\partial_t u_n)$. To this end, let $\vep \in (0,T)$ be a constant, and observe that 
  \begin{align*}
   \limsup_{n \to \infty} \int^T_\vep \left( \eta_n, \partial_t u_n \right) \, \d t
   &\stackrel{\eqref{pde}}\leq - \liminf_{n \to \infty} \int^T_\vep \|\partial_t u_n\|_2^2 \, \d t
   - \liminf_{n \to \infty} E(u_n(T))\\
   & \quad + \limsup_{n \to \infty} E(u_n(\vep)).
  \end{align*}
  By Aubin-Lions-Simon's compactness lemma along with the compact embedding $H^2(\Omega) \cap L^6(\Omega) \hookrightarrow H^1(\Omega) \cap L^4(\Omega)$, for any $\delta > 0$, we see that
  $$
  u_n \to u \quad \mbox{ strongly in } C([\delta,T];H^1_0(\Omega) \cap L^4(\Omega)),
  $$
  which particularly implies
  $$
  u_n(t) \to u(t) \quad \mbox{ strongly in } H^1_0(\Omega) \cap L^4(\Omega)
  $$
  for $t \in (0,\infty)$. Therefore for any $\vep > 0$, we conclude that
  $$
  E(u_n(\vep)) \to E(u(\vep)).
  $$
  Here we also remark that due to \eqref{Eu-ep}, $E(u(\vep))$ is estimated by
  $$
  E(u(\vep)) \leq \dfrac{C_1}2 \left( 1 + r^{2/3} \right) + \dfrac 1 {4\vep} \|u_0\|_2^2 \quad \mbox{ for any } \ \vep > 0.
  $$
  It follows that
\begin{align*}
   \limsup_{n \to \infty} \int^T_\vep \left( \eta_n, \partial_t u_n \right) \, \d t
 &\leq - \int^T_\vep \|\partial_t u\|_2^2 \, \d t
 - E(u(T)) + E(u(\vep))\\
 &= \int^T_\vep \left( \eta , u_t \right) \, \d t,
\end{align*}
  and therefore, due to Minty's trick (see~\cite{HB1}), we conclude that $\eta \in \partial \iI(u_t)$ a.e.~in $\Omega \times (\vep,T)$ (see \S \ref{Ss:ii}). Since one can also take $\vep > 0$ arbitrarily close to zero, the desired conclusion is obtained. 
  As for the energy inequalities, the idea of derivation is basically same as the proof of (ii).

 \subsection{Proof of (iv)}
 Let $u = u(x,t)$ be a solution to (P) which also solves \eqref{pde2}--\eqref{ic2}. By Theorem \ref{T:reform}, it is uniquely determined by $u_0$ (and actually exists). Therefore by the proofs of (i)--(iii) of Theorem \ref{A:T:ex}, $u(x,t)$ satisfies the energy inequalities which have already been verified in the preceding subsections and is also obtained as a limit of unique solutions $u_\lambda$ to \eqref{P-approx:2} as $\lambda \to 0$.

  \smallskip
  \noindent
  {\bf Energy Inequality \eqref{e3}.} By \eqref{eta-repre}, for each $s \in [0,T)$ at which $\eta(s)$ satisfies \eqref{pde}, we can construct a solution to (P) with the initial datum $u(s)$ as above and deduce by \eqref{eta-ini2} and the uniqueness of solutions that
\begin{equation}\label{e3pre}
 \|\eta(t)\|_2^2 \leq \|\eta(s)\|_2^2 \quad \mbox{ for a.e. } \ t \in (s,T).
\end{equation}
We remark that the set of $t \in (s,T)$ at which \eqref{e3pre} is satisfied may depend on the choice of $s$. We further claim that
\begin{equation}\label{e3again}
 \|\eta(t)\|_2^2 \leq \|\eta(s)\|_2^2 \quad \mbox{ for a.e. } (s,t) \in \{(\sigma, \tau) \in [0,T]^2 \colon \sigma \leq \tau\}
\end{equation}
(hereafter, we also simply write \eqref{e3} instead of \eqref{e3again}). Indeed, the subset $I = \{(\sigma, \tau) \in [0,T]^2 \colon \sigma \leq \tau, \ \|\eta(\tau)\|_2 > \|\eta(\sigma)\|_2\}$ is (Lebesgue) measurable due to the measurability of $t \mapsto \|\eta(t)\|_2$. Hence since $I_\sigma := \{ \tau \in [\sigma, T] \colon (\sigma,\tau) \in I\}$ has Lebesgue measure zero, so is $I$ by Fubini-Tonelli's lemma. Thus \eqref{e3again} follows.

  \smallskip
  \noindent
  {\bf Energy Inequalities \eqref{e6} and \eqref{e6-0}.} Similarly, we can also prove by uniqueness that
 \begin{align*}
  \lefteqn{
  \int^t_s \left( \|\nabla u_\tau\|_2^2 + \dfrac 3 4 \left\| \dfrac \d {\d t} \left( |u|u \right)\right\|_2^2 \right) \, \d \tau
  }\\
&+ \dfrac 1 2 \left\| \partial \psi(u(t)) - \lam u(t) \right\|_2^2 + \lam E(u(t))
\leq \dfrac 1 2 \left\| \partial \psi(u(s)) - \lam u(s) \right\|_2^2 + \lam E(u(s))\\
  &\quad \quad \mbox{ for a.e. } \ 0 < s < t < T.
 \end{align*}
  In particular, the function $t \mapsto (1/2) \left\| \partial \psi(u(t)) - \lam u(t) \right\|_2^2 + \lam E(u(t))$ is non-increasing, and hence, it is differentiable a.e.~in $(0,T)$. Dividing both sides by $t - s$ and taking a limit as $s \nearrow t$, we obtain \eqref{e6}. Furthermore \eqref{e6-0} also follows in a similar way.

 \smallskip
 \noindent
 {\bf Energy Inequality 2.} Here we suppose that $u_0$ satisfies \eqref{u0-iii}. Multiplying \eqref{pde:va} by $v_\lambda$, we have
$$
\dfrac 1 2 \dfrac \d {\d t} \|v_\lambda\|_2^2 + \int_\Omega (\partial_t \eta_\lambda) v_\lambda \, \d x + \left( \dfrac \d {\d t} \partial \psi_\lambda(u_\lambda), v_\lambda \right) = \lam \|v_\lambda\|_2^2.
$$
Here we remark that
$$
\int_\Omega (\partial_t \eta_\lambda) v_\lambda \, \d x = \dfrac \d {\d t} \iI^*(\eta_\lambda) = 0
$$
by $v_\lambda \in \partial \iI^*(\eta_\lambda)$. Therefore we find by \eqref{ut*dpsit} that
\begin{equation*}
\dfrac 1 2 \dfrac \d {\d t} \|v_\lambda\|_2^2 + \|\nabla (J_\lambda u_\lambda)_t\|_2^2 + \dfrac 3 4 \left\| \dfrac \d {\d t} \left( |J_\lambda u_\lambda| J_\lambda u_\lambda \right)\right\|_2^2 \, \d x \leq \lam \|v_\lambda\|_2^2.
\end{equation*}
To apply the convergence obtained so far (e.g.~\eqref{sharp2-1} and \eqref{sharp2-2}) and employ the weak lower semicontinuity of norms,
$$
\dfrac 1 2 \dfrac \d {\d t} \left( e^{-2\lam t} \|v_\lambda\|_2^2 \right)
+ e^{-2\lam t} \|\nabla (J_\lambda u_\lambda)_t\|_2^2 
\leq 0.
$$
Integrate both sides over $(s,t)$, pass to the limit as $\lambda \to 0$, divide both sides of the resulting inequality by $t - s$ and take the limit as $s \nearrow t$. Then one can obtain
$$
\dfrac 1 2 \dfrac \d {\d t} \left( e^{-2\lam t} \|u_t\|_2^2 \right)
+ e^{-2\lam t} \|\nabla u_t \|_2^2 
\leq 0
\quad \mbox{ for a.e. } \ 0 < t < T,
$$
which implies \eqref{e2}. Thus all energy inequalities (for $u_0$ satisfying \eqref{u0-iii}) obtained in \S \ref{S:e} along with (iii) of Theorem \ref{T:ex} have been rigorously reproduced. This completes the proof.

 \section{Proof of Theorem \ref{T:reform}}\label{A:reform}

As mentioned in Remark \ref{R:inaccuracy}, the arguments in \S \ref{S:ref} include formal computations. In order to justify them, we recall again the approximate problems \eqref{P-approx:2} whose solutions are sufficiently smooth in time and reproduce the arguments in a rigorous fashion. Throughout this section, let $((0,T),\M_t,\mu_t)$, $(\Omega,\M_x,\mu_x)$ and $(Q,\M_{x,t},\mu_{x,t})$ be the measure spaces of Lebesgue measures with respect to $t$, $x$ and $(x,t)$, respectively. Moreover, for any $A \in \M_{x,t}$, we write
\begin{align*}
 A_x &:= \{ t \in (0,T) \colon (x,t) \in Q \} \ \mbox{ for each } x \in \Omega,\\
 A_t &:= \{ x \in \Omega \colon (x,t) \in Q \} \ \mbox{ for each } t \in (0,T).
\end{align*}
Then $A_x \in \M_t$ for $\mu_x$-a.e.~$x \in \Omega$ and $A_t \in \M_x$ and for $\mu_t$-a.e.~$t \in (0,T)$, by Fubini-Tonelli's lemma.

Let $u_\lambda$ be the solution of \eqref{P-approx:2} for $u_0 \in D(\partial \psi) = H^2(\Omega) \cap H^1_0(\Omega) \cap L^6(\Omega)$ and let $\eta_\lambda$ be the section of $\partial \iI(\partial_t u_\lambda)$. We recall that $u_\lambda \in C^{1,1}([0,T];L^2(\Omega))$ and $\eta_\lambda, \partial \psi_\lambda(u_\lambda) \in C^{0,1}([0,T];L^2(\Omega))$. Hence $\eta_\lambda$ and $\partial \psi_\lambda(u_\lambda)$ are differentiable $\mu_t$-a.e.~in $(0,T)$ with values in $L^2(\Omega)$. Moreover, (by taking a continuous representation of $\eta_\lambda$) it holds that
$$
u_\lambda(t) = u_\lambda(s) + \int^t_s \partial_\tau u_\lambda(\tau) \, \d \tau,
\quad \eta_\lambda(t) = \eta_\lambda(s) + \int^t_s \partial_\tau \eta_\lambda(\tau) \, \d \tau \ \mbox{ in } L^2(\Omega)
$$
for any $t,s \in [0,T]$. Moreover, recall that $u_\lambda$ and $\eta_\lambda$ satisfy \eqref{P-approx:2} in $L^2(\Omega)$ for all $t \in [0,T]$.

Since both sides of \eqref{P-approx:2} are differentiable $\mu_t$-a.e.~in $(0,T)$, as in the proof of Lemma \ref{L:eta-dec}, for any $r \in (1,2)$ and $\zeta \in L^q(\Omega)$, $\zeta \geq 0$ with $q \in (1,\infty)$ satisfying $1/q + r/2 = 1$, one observes that
$$
\int_\Omega \zeta(x) |\eta_\lambda(x,t)|^r \, \d x \leq \int_\Omega \zeta(x) |\eta_\lambda(x,s)|^r \, \d x \quad \mbox{ if } \ t \geq s
$$
for all $t,s \in [0,T]$. Since $\eta_\lambda \in C^{0,1}([0,T];L^2(\Omega))$ and $\zeta|\eta_\lambda|^{r-2}\eta_\lambda \in C([0,T];L^2(\Omega))$, we deduce that the function $t \mapsto \int_\Omega \zeta(x)|\eta_\lambda(x,t)|^r \ \d x$ belongs to $C^{0,1}([0,T])$, and hence,
$$
\dfrac \d {\d t} \int_\Omega \zeta(x) |\eta_\lambda(x,t)|^r \, \d x \leq 0 \quad \mbox{ for $\mu_t$-a.e. } t \in (0,T).
$$
From the arbitrariness and nonnegativity of $\zeta$, it follows that
$$
r |\eta_\lambda(\cdot,t)|^{r-2}\eta_\lambda(\cdot,t) \partial_t \eta_\lambda(\cdot,t) = \partial_t |\eta_\lambda(\cdot,t)|^r \leq 0 \ \mbox{ $\mu_x$-a.e.~in } \Omega
$$
for $\mu_t$-a.e.~$t \in (0,T)$. Hence $\partial_t\eta_\lambda(t) \geq 0$ $\mu_x$-a.e.~in $\Omega$ for $\mu_t$-a.e.~$t \in (0,T)$.

For each $t \in (0,T)$, define the set $\Omega_t \in \M_x$ of $x \in \Omega$ satisfying 
\begin{enumerate}
 \item[(i)] $u_\lambda$ and $\eta_\lambda$ satisfy \eqref{P-approx:2} at $(x,t)$,
 \item[(ii)] $u_\lambda$, $\eta_\lambda$ and $\partial \psi_\lambda(u_\lambda)$ are partially differentiable in $t$ at $(x,t)$,
 \item[(iii)] the following identities hold at $(x,t)$:
       \begin{align*}
	u_\lambda(x,t) &= u_\lambda(x,0) + \int^t_0 \partial_\tau u_\lambda(x,\tau) \, \d \tau,\\
	\eta_\lambda(x,t) &= \eta_\lambda(x,0) + \int^t_0 \partial_\tau \eta_\lambda(x,\tau) \, \d \tau,
       \end{align*}
 \item[(iv)] $\partial_t \eta_\lambda(x,t) \geq 0$ at $(x,t)$.
\end{enumerate}
Then $\mu_x(\Omega \setminus \Omega_t) = 0$ for $\mu_t$-a.e.~$t \in (0,T)$. Define the set $Q_1 \in \M_{x,t}$ of $(x,t) \in Q$ satisfying (i)--(iv) above. Then noting that $\Omega_t = (Q_1)_t$, we find by Fubini-Tonelli's lemma that $Q_1$ has full measure, i.e., $\mu_{x,t}(Q \setminus Q_1) = 0$.

Now, set
$$
\Omega_1 := \left\{ x \in \Omega \colon \mu_t ((0,T) \setminus (Q_1)_x) = 0\right\}.
$$
First, we claim that $\Omega_1 \in \M_x$. Indeed, by Fubini-Tonelli's lemma, we see that $(Q_1)_x \in \M_t$ for $\mu_x$-a.e.~$x \in \Omega$, and moreover, the function $x \mapsto \mu_t( (Q_1)_x )$ is $\M_x$-measurable. Since the function
$$
x \mapsto \mu_t ((0,T) \setminus (Q_1)_x) = T - \mu_t((Q_1)_x)
$$
is also $\M_x$-measurable, the level set $\Omega_1$ of the $\M_x$-measurable function above also belongs to $\M_x$.

Next, we claim that $\mu_x(\Omega \setminus \Omega_1) = 0$. Indeed, note that
\begin{align*}
N_1 &:= \left\{
(x,t) \in Q \colon x \in \Omega\setminus \Omega_1, \ t \in (0,T) \setminus (Q_1)_x
\right\} \subset Q \setminus Q_1,\\
N_2 &:= \left\{
(x,t) \in Q \colon x \in \Omega_1, \ t \in (0,T) \setminus (Q_1)_x
\right\} \subset Q \setminus Q_1.
\end{align*}
Since the measure space $(Q, \M_{x,t}, \mu_{x,t})$ is complete, the sets $N_1$ and $N_2$ also belong to $\M_{x,t}$. In particular, we obtain $\mu_{x,t}(N_1) = \mu_{x,t}(N_2) = 0$. By Fubini-Tonelli's lemma,
$$
\int_{\Omega \setminus \Omega_1} \mu_t ((0,T) \setminus (Q_1)_x) \, \d x = \mu_{x,t}(N_1) = 0,
$$
which implies $\mu_x(\Omega \setminus \Omega_1) = 0$ by $\mu_t ((0,T) \setminus (Q_1)_x) > 0$ for a.e.~$x \in \Omega \setminus \Omega_1$.

Furthermore, the set
$$
Q_2 := \{(x,t) \in Q \colon x \in \Omega_1, \ t \in (Q_1)_{x}\} = \left( \Omega_1 \times (0,T) \right) \setminus N_2
$$
is $\M_{x,t}$-measurable and has full measure, i.e., $\mu_{x,t}(Q \setminus Q_2) = 0$; indeed, applying Fubini-Tonelli's lemma and combining all the facts obtained so far, we conclude that
$$
0 \leq \mu_{x,t}(Q \setminus Q_2) \leq \mu_{x,t}(N_2) + \mu_x(\Omega \setminus \Omega_1) T = 0.
$$
Moreover, $Q_2$ is a subset of $Q_1$. Now, we are ready to prove Theorem \ref{T:reform}. Let $(x_0,t_0) \in Q_2$ be fixed. In case $u_\lambda(x_0,t_0) = u_0(x_0)$, by $\partial \iII(u_\lambda(x_0,t_0)) = (-\infty,0]$, the relation
\begin{equation}
 \partial_t u_\lambda + \partial \iII (u_\lambda) + \partial \psi_\lambda(u_\lambda) - \lam u_\lambda \ni 0\label{pde2-lam}
\end{equation}
holds true at $(x_0,t_0)$. In case $u_\lambda(x_0,t_0) > u_0(x_0)$ (then $\partial \iIIo(u_\lambda(x_0,t_0)) = \{0\}$), since $(x_0,t_0), (x_0,t) \in Q_1$ for $\mu_t$-a.e.~$t \in (0,T)$, there exists $t_1 \in (0,t_0) \cap (Q_1)_{x_0}$ such that $\partial_t u_\lambda(x_0,t_1) > 0$, which implies $\eta_\lambda(x_0,t_1) = 0$. Moreover, it follows that
\begin{align*}
0 \geq \eta_\lambda (x_0,t_0)
&= \int^{t_0}_{t_1} \partial_\tau \eta_\lambda(x_0,\tau) \, \d \tau + \eta_\lambda(x_0,t_1)\\
&= \int_{(t_1,t_0) \cap (Q_1)_{x_0}} \partial_\tau \eta_\lambda(x_0,\tau) \, \d \tau + \eta_\lambda(x_0,t_1)
\geq 0,
\end{align*}
which implies $\eta_\lambda(x_0,t_0) = 0$. Thus \eqref{pde2-lam} is satisfied at $(x_0,t_0)$. In particular, the section $\eta_\lambda$ of $\partial \iI(\partial_t u_\lambda)$ also belongs to the set $\partial \iIIo(u_\lambda)$ for $\mu_{x,t}$-a.e.~in $Q$.

Recalling the convergence as $\lambda \to 0$ of solutions $u_\lambda$ for \eqref{P-approx:2} obtained in \S \ref{S:Aex}, one can deduce that
$$
\eta_\lambda \to \eta \in \partial \iII(u) \quad \mbox{ weakly star in } L^\infty(0,T;L^2(\Omega))
$$
by the demiclosedness of maximal monotone operators. Hence the limit $u$ of $u_\lambda$ also solves \eqref{pde2}--\eqref{ic2}.

We next consider the case that $u_0 \in \overline{D_r}^{L^2}$. Then let us take $u_{0,n} \in D_r$ such that
\begin{equation}\label{u0n-conv}
u_{0,n} \to u_0 \quad \mbox{ strongly in } L^2(\Omega).
\end{equation}
Let $u_n$ be the solution of (P) with the initial data $u_{0,n}$ such that $u_n$ also solves \eqref{pde2}--\eqref{ic2} with $u_0$ replaced by $u_{0,n}$. In particular, the section $\eta_n$ of $\partial \iI(\partial_t u_n)$ as in \eqref{EQ} also belongs to the set $\partial \iIIn(u_n)$. On the other hand, by \eqref{u0n-conv}, it holds that $\iIIn \to \iII$ on $L^2(\Omega)$ in the sense of Mosco (see Lemma \ref{L:Mosco} below and~\cite{Attouch}). Therefore from the convergence of $u_n$ obtained in \S \ref{Ss:ii}, we deduce that the limit $\eta$ of $\eta_n$ fulfills
$$
\eta \in \partial \iII(u) \quad \mbox{ for a.e. } t \in (0,T).
$$

\begin{lemma}\label{L:Mosco}
Let $u_{0,n}, u_0 \in L^2(\Omega)$ be such that $u_{0,n} \to u_0$ strongly in $L^2(\Omega)$. Then $\iIIn \to \iII$ on $L^2(\Omega)$ in the sense of Mosco.
\end{lemma}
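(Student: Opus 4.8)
The plan is to verify the two defining conditions of Mosco convergence directly for the two indicator functionals. Writing $\mathcal{K}_n := \{u \in L^2(\Omega) : u \geq u_{0,n} \text{ a.e.~in } \Omega\}$ and $\mathcal{K} := \{u \in L^2(\Omega) : u \geq u_0 \text{ a.e.~in } \Omega\}$, which are nonempty closed convex sets, so that $\iIIn = I_{\mathcal{K}_n}$ and $\iII = I_{\mathcal{K}}$ are proper convex lower semicontinuous, Mosco convergence of these indicator functions reduces to Mosco convergence of the convex sets: namely, \textbf{(M1)} whenever $u_n \in \mathcal{K}_n$ and $u_n \rightharpoonup u$ weakly in $L^2(\Omega)$, one has $u \in \mathcal{K}$; and \textbf{(M2)} for every $u \in \mathcal{K}$ there exist $u_n \in \mathcal{K}_n$ with $u_n \to u$ strongly in $L^2(\Omega)$. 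First I would record these reductions and then treat the two conditions separately.

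For \textbf{(M1)}, the key observation is that the constraint $u_n \geq u_{0,n}$ means precisely that $u_n - u_{0,n}$ lies in the nonnegative cone $\mathcal{C} := \{w \in L^2(\Omega) : w \geq 0 \text{ a.e.}\}$, which is convex and strongly closed, hence weakly closed. Since $u_n \rightharpoonup u$ weakly while $u_{0,n} \to u_0$ strongly (hence weakly), the difference satisfies $u_n - u_{0,n} \rightharpoonup u - u_0$ weakly in $L^2(\Omega)$; the weak closedness of $\mathcal{C}$ then yields $u - u_0 \in \mathcal{C}$, i.e.~$u \in \mathcal{K}$. This step is essentially immediate.

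For \textbf{(M2)}, I would exhibit an explicit recovery sequence. Given $u \in \mathcal{K}$, set $u_n := \max\{u, u_{0,n}\} \in L^2(\Omega)$, which clearly belongs to $\mathcal{K}_n$. Using that $u \geq u_0$ together with the monotonicity of the positive-part operation, one estimates pointwise
$$
0 \leq u_n - u = (u_{0,n} - u)_+ \leq (u_{0,n} - u_0)_+ \leq |u_{0,n} - u_0|,
$$
so that $\|u_n - u\|_2 \leq \|u_{0,n} - u_0\|_2 \to 0$. Hence $u_n \to u$ strongly and $\iIIn(u_n) = 0 \to 0 = \iII(u)$, which is exactly \textbf{(M2)}.

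The argument is short, and I do not anticipate a serious obstacle; the only point requiring a little care is the choice of recovery sequence in \textbf{(M2)}, where the \emph{strong} (rather than merely weak) convergence of the data $u_{0,n} \to u_0$ is used in an essential way to control $\|u_n - u\|_2$. I would make sure the pointwise monotonicity inequality for $(\,\cdot\,)_+$ is invoked correctly and that $u_n = \max\{u, u_{0,n\}}$ indeed lies in $L^2(\Omega)$.
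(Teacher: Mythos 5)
Your proof is correct, and its skeleton --- verifying the recovery-sequence condition and the weak liminf condition separately --- is the same as the paper's; the substance of each half, however, differs. For the recovery sequence the paper simply translates: given $w \geq u_0$ it sets $w_n := w - u_0 + u_{0,n}$, so that $w_n - u_{0,n} = w - u_0 \geq 0$ and $w_n - w = u_{0,n} - u_0 \to 0$ strongly with no further estimate required; your truncation $u_n := \max\{u, u_{0,n}\}$ works equally well but needs the pointwise monotonicity bound $0 \leq u_n - u = (u_{0,n}-u)_+ \leq (u_{0,n}-u_0)_+ \leq |u_{0,n}-u_0|$, which you carry out correctly (and which has the mild advantage of leaving $u$ unchanged on the set where $u \geq u_{0,n}$). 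For the liminf condition the paper argues concretely: from $\liminf_n \iIIn(u_n) < \infty$ it extracts a subsequence with $u_n \geq u_{0,n}$ a.e., tests this inequality against nonnegative $z \in C^\infty_0(\Omega)$, and passes to the limit --- in effect re-proving by hand exactly the fact you invoke abstractly, namely that the nonnegative cone $\mathcal{C}$ is convex and strongly closed, hence weakly closed, so that $u_n - u_{0,n} \rightharpoonup u - u_0 \in \mathcal{C}$. Your packaging is a bit more economical (one appeal to Mazur's theorem replaces the test-function argument), while the paper's is self-contained at the level of elementary integration; both are complete proofs of the lemma.
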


\begin{proof}
 \emph{Existence of recovery sequences}: For each $w \in D(\iII)$, define a recovery sequence $w_n := w - u_0 + u_{0,n} \in L^2(\Omega)$. Then $w_n \geq u_{0,n}$, which gives $w_n \in D(\iIIn)$. Moreover, $w_n \to w$ strongly in $L^2(\Omega)$ by assumption.

 \noindent
 \emph{Weak liminf convergence}: Let $w_n, w \in L^2(\Omega)$ be such that $w_n \to w$ weakly in $L^2(\Omega)$. We shall check that
 $$
 \liminf_{n \to \infty} \iIIn(w_n) \geq \iII(w).
 $$
 In case $\liminf_{n \to \infty} \iIIn(w_n) = \infty$, the assertion follows immediately. In case $\liminf_{n \to \infty} \iIIn(w_n) < \infty$, up to a (not relabeled) subsequence, $\iIIn(w_n)$ is bounded. Hence $w_n \geq u_{0,n}$ a.e.~in $\Omega$. For each $z \in C^\infty_0(\Omega)$ satisfying $z \geq 0$, it follows that
 $$
 \int_\Omega w_n z \, \d x \geq \int_\Omega u_{0,n} z \, \d x.
 $$
 Letting $n \to \infty$ and using the arbitrariness of $z$, we conclude that $w \geq u_0$ a.e.~in $\Omega$. Thus $\iII(w) = 0$, and hence, the assertion follows. Consequently, $\iIIn \to \iII$ on $L^2(\Omega)$ in the sense of Mosco.
\end{proof}

As in \S \ref{S:ref}, one can prove the uniqueness of $u$ which solves both \eqref{pde2}--\eqref{ic2} and (P) as well as the equivalence of two problems. This completes the proof of Theorem \ref{T:reform}.

\end{document}